\newtheorem{thm}{Theorem}[section]
\newtheorem*{thm*}{Theorem}
\newtheorem{prop}[thm]{Proposition}
\newtheorem{lem}[thm]{Lemma}
\newtheorem*{lem*}{Lemma}
\newtheorem*{cor*}{Corollary}
\theoremstyle{definition}
\newtheorem{defn}[thm]{Definition}
\newtheorem{cor}{Corollary}[section]
\newtheorem{conj}[thm]{Conjecture}
\newtheorem{rem}[thm]{Remark}
\numberwithin{equation}{section}
\title{Operator orbit Frames and frame-like Fourier expansions}
\author{Chad Berner and Eric S. Weber}
\begin{document}

\begin{abstract}
Frames in a Hilbert space that are generated by operator orbits are vastly studied because of the applications in dynamic sampling and signal recovery.
We demonstrate in this paper a representation theory for frames generated by operator orbits that provides explicit constructions of the frame and the operator when the operators are not surjective. It is known that the Kaczmarz algorithm for stationary sequences in Hilbert spaces generates a frame that arises from an operator orbit where the operator is not surjective.
In this paper, we show that every frame generated by a not surjective operator in any Hilbert space arises from the Kaczmarz algorithm. Furthermore, we show that the operators generating these frames are similar to rank one perturbations of unitary operators. After this, we describe a large class of operator orbit frames that arise from Fourier expansions for singular measures. Moreover, we classify all measures that possess frame-like Fourier expansions arising from two-sided operator orbit frames. Finally, we show that measures that possess frame-like Fourier expansions arising from two-sided operator orbits are weighted Lebesgue measure with weight satisfying a weak $A_{2}$ condition, even in the non-frame case. We also use these results to classify measures with other types of frame-like Fourier expansions.
\end{abstract}

\maketitle

\section{Introduction}

Consider the problem of recovering a vector $v$ in a Hilbert space $H$ from its inner-products with a fixed set of vectors $\{ \langle v , x_n \rangle \}$.  In the simplest case, we are describing the situation of solving a (finite) system of linear equations for which there are numerous algebraic and numerical techniques.  When $H$ is infinite dimensional, then more sophisticated ideas are required; some ideas include orthonormal or Riesz bases and frames, see \cite{Duffin1952Class},\cite{Casazza2000Art}.  These are special cases of situations in which there is a dual set $\{ y_{n} \}$ such that 
\[ v = \sum_{n} \langle v, x_n \rangle y_n \]
where the sum converges in a meaningful (though context dependent) manner.

We are motivated by the question of when the reconstruction of $v$ can be accomplished with Fourier series--here we are considering $H = L^2(\mu)$ for $\mu$ a measure on $\mathbb{R}$ and $y_{n} = y_{n}(x) := e^{2 \pi i x n}$.  Thus, we want to understand when $v \in L^2(\mu)$ can be reconstructed as
\[ v = \sum_{n} \langle v, x_{n} \rangle e^{2 \pi i n x}. \]
Here, we leave the range of $n$ ambiguous, since we will consider both $n \in \mathbb{N}_{0}=\mathbb{N}$ and $n \in \mathbb{Z}$.

Building on prior work, we will find that our question regarding Fourier series expansions leads to several intertwined topics:
\begin{enumerate}
    \item operator orbit frames \cite{Aldroubi2017Dynamical}
    \item the Kaczmarz algorithm \cite{Kaczmarz1993Approximate},\cite{Herr2017Fourier}
    \item frame-like duality in Fourier expansions.
\end{enumerate}
In our effort to describe Fourier expansions, we found that we needed a fuller description of operator orbit frames, see \cite{Christensen2020Frame}.  Indeed, we have both a complete description of such frames and also a construction of all such frames through the Kaczmarz algorithm.  We will introduce the key ideas of each of these topics then provide a more detailed description of the contents of the paper.


\subsection{Operator Orbit Frames} Much is known about frames of the form $\{g_{n}\}_{n=0}^{\infty}=\{T^{n}g_{0}\}_{n=0}^{\infty}$ where $T$ is a bounded operator in a general Hilbert space. For example, Christensen, Hasannasab, and Philipp show that frames of this form are equivalent to operator orbit frames on a backward invariant subspace of the Hardy space in \cite{Christensen2020Frame}. Additionally, Aldroubi and Petrosyan prove many properties about the operators generating these frames such as they can't be unitary operators and normal operators never generate Riesz bases \cite{Aldroubi2017Dynamical}. Furthermore, some of the most robust frames in signal recovery are of the form $\{T^{n}g_{0}\}_{n=0}^{\infty}$, see \cite{Christensen2024Mystery} for details. We continue by providing the definition of a frame here:

\begin{defn}
Let $\{g_{n}\}_{n}\subseteq H$ where $H$ is a Hilbert space.
\begin{enumerate}
\item If there exists $A>0$ such that
$$A\|f\|^{2}\leq \sum_{n}|\langle f, g_{n}\rangle|^{2}$$
for all $f\in H$, then $\{g_{n}\}_{n}$ is called a \textbf{lower semi-frame}.
\item If there exists $B>0$ such that
$$\sum_{n}|\langle f, g_{n}\rangle|^{2}\leq B\|f\|^{2}$$
for all $f\in H$, then $\{g_{n}\}_{n}$ is called a \textbf{Bessel sequence}.
\item If there exists $A,B>0$ such that
    $$A\|f\|^{2}\leq \sum_{n}|\langle f, g_{n}\rangle|^{2}\leq B\|f\|^{2}$$
    for all $f\in H$, then $\{g_{n}\}_{n}$ is called a \textbf{frame}.
\item If for some $A>0$
$$A\|f\|^{2}=\sum_{n}|\langle f, g_{n}\rangle|^{2}$$
for all $f\in H$, then $\{g_{n}\}_{n}$ is called a \textbf{tight frame}.
\item If
$$\|f\|^{2}=\sum_{n}|\langle f, g_{n}\rangle|^{2}$$
for all $f\in H$, then $\{g_{n}\}_{n}$ is called a \textbf{Parseval frame}.
\item If $\{g_{n}\}$ is a frame, and
$$\sum_{n}c_{n}g_{n}=0\implies \{c_{n}\}=0,$$
whenever $\{c_{n}\}\in \ell^{2}$,
then $\{g_{n}\}_{n}$ is called a \textbf{Riesz basis}.
\end{enumerate}
Furthermore, if $\{g_{n}\}_{n}$ is a Bessel sequence in Hilbert space $H$,
the map on $H$: $f\to \{\langle f, g_{n}\rangle \}_{n}$ is called the \textbf{analysis operator}, its adjoint is called the \textbf{synthesis operator}, and 
we denote $S: H\to H$ as the positive operator such that
$$S(f)=\sum_{n}\langle f, g_{n}\rangle g_{n},$$ which is called the \textbf{frame operator}.
Additionally, if $\{g_{n}\}_{n}$ is a frame, then its associated frame operator is invertible, and $\{S^{-1}g_{n}\}_{n}$ is a frame such that
$$f=\sum_{n}\langle f, S^{-1}g_{n}\rangle g_{n}=\sum_{n}\langle f, g_{n}\rangle S^{-1}g_{n}$$ for all $f\in H$.
Furthermore, the convergence above is unconditional.
\end{defn}

\subsection{The Kaczmarz Algorithm} Our treatment of frames arising from operator orbits is motivated by the Kaczmarz algorithm in Hilbert spaces, see \cite{Haller2005Kaczmarz}.
Initially however, Stefan Kaczmarz introduced an iterative process of solving linear systems, which we now call the Kaczmarz algorithm \cite{Kaczmarz1993Approximate}. Although more recently, there is interest in asking when all vectors in a Hilbert space can be reconstructed from a sequence in a stable way:

\begin{defn}
For a Hilbert space $H$ and complete sequence of unit vectors $\{e_{n}\}_{n=0}^{\infty}\subseteq H$, define the \textbf{auxiliary sequence} of $\{e_{n}\}$ in $H$ recursively as follows:
$$g_{0}=e_{0}$$
$$g_{n}=e_{n}-\sum_{k=0}^{n-1}\langle e_{n},e_{k}\rangle g_{k} \ \forall n\geq 1.$$

If for all $x\in H$,
$$\sum_{k=0}^{n}\langle x,g_{k}\rangle e_{k}\to x,$$
then $\{e_{n}\}_{n=0}^{\infty}$ is called \textbf{effective}.
\end{defn}
Additionally, it is shown by Haller and Szwarc in \cite{Haller2005Kaczmarz} that $\{e_{n}\}_{n=0}^{\infty}$ is effective if and only if the auxiliary sequence $\{g_{n}\}_{n=0}^{\infty}$ is a Parseval frame. Due to Czaja and Tanis, it is also known that if $\{e_{n}\}_{n=0}^{\infty}$ is an effective Riesz basis, then $\{e_{n}\}_{n=0}^{\infty}$ is an orthonormal basis \cite{Czaja2013Kaczmarz}.

Furthermore, there is a dual Kaczmarz algorithm defined by Aboud, Curl, Harding, Vaughan, and Weber in \cite{Aboud2020Dual} that is a generalization of the Kaczmarz algorithm in Hilbert spaces:
\begin{defn}
Let $\{\phi_{n}\}_{n=0}^{\infty}$ and $\{\psi_{n}\}_{n=0}^{\infty}$ be complete sequences in a Hilbert space $H$ such that $$\langle \phi_{n},\psi_{n}\rangle=1$$ for all $n$. The \textbf{auxiliary sequence} for pair $(\phi_{n}, \psi_{n})$ is defined as follows:
$$g_{0}=\phi_{0},$$
for $n\geq 1$, $$g_{n}=\phi_{n}-\sum_{k=0}^{n-1}\langle \phi_{n},\psi_{k}\rangle g_{k}.$$

Furthermore, if the sequence $$x_{n}=\sum_{k=0}^{n}\langle x,g_{k}\rangle \psi_{k}\to x$$ for all $x\in H$, then the pair $(\phi_{n}, \psi_{n})$ is called \textbf{effective}.
\end{defn}

\subsection{Duality in Fourier Series Expansions} Because an effective Kaczmarz algorithm gives rise to a frame-like expansion, we make the definitions of these types of expansions clear:

\begin{defn}
In Hilbert space $H$, we say $\{x_{n}\}_{n=0}^{\infty}$ is \textbf{dextrodual} to $\{e_{n}\}_{n=0}^{\infty}$ if 
$$\sum_{n=0}^{k}\langle x,x_{n}\rangle e_{n}\to x$$
for all $x\in H$.

Similarly, we say that $\{x_{n}\}_{n\in \mathbb{Z}}$ is \textbf{Poisson dextrodual} to $\{e_{n}\}_{n\in \mathbb{Z}}$ if
$$\sum_{n=-k}^{k}\langle x,x_{n}\rangle e_{n}\to x$$
for all $x\in H$.
\end{defn}
Note, these definitions do not assume anything about unconditional convergence as in the case of frame series. Similar definitions of sequences with frame-like properties are also discussed by Li and Ogawa in \cite{Li2001Pseudo-duals}.

Noting the orthogonality of $\{e^{2\pi i nx}\}_{n\in \mathbb{Z}}$ in $L^{2}([0,1))$, there is recent interest in finding other measures $\mu$ that possess a sequence of exponentials that form an orthonormal basis for $L^{2}(\mu)$. An example of a singular measure that possesses this property was discovered by Jorgensen and Pedersen in \cite{Jorgensen1998Dense}, and further analysis of all orthonormal bases of exponential functions on this measure is discussed by Dutkay, Han, and Sun in \cite{Dutkay2009Spectra}. Moreover, a relaxation of this property for measures is also vastly studied, that is, finding measures $\mu$ that possess a sequence of exponentials that form a frame for $L^{2}(\mu)$. It was shown by Lai and Wang that there is a singular measure that does not admit an orthonormal basis of exponential functions but does admit a frame of exponential functions \cite{Lai2017Non-spectral}. Furthermore, it is known that if $\mu$ possesses a frame of exponential functions, then $\mu$ is of pure type, that is singular or absolutely continuous \cite{He2013Exponential}. In the case that $\mu$ is absolutely continuous, it is know exactly when $\mu$ possesses a frame of exponential functions by work of Lai in \cite{Lai2011Fourier}. While there are other known measures $\mu$ that are singular which possess a frame of exponential functions such as in \cite{Picioroaga2017Fourier}, when exactly singular measures have this property is still an open problem.
For more discussion on singular measures $\mu$ that possess a sequence of exponential functions that form a frame for $L^{2}(\mu)$, one can also read the work of Dutkay, Han, Sun, and Weber \cite{Dutkay2011Beurling}.

Furthermore, even though it is unknown exactly which singular measures possess a frame of exponential functions, it was shown in \cite{Herr2017Fourier} that if $\mu$ is a singular Borel probability measure on $[0,1)$, then $\mu$ possess a Parseval frame $\{g_{n}\}_{n=0}^{\infty}$ that is dextrodual to $\{e^{2\pi i nx}\}_{n=0}^{\infty}$, even though $\{e^{2\pi i nx}\}_{n=0}^{\infty}$ is never a frame for any singular measure. Therefore, this still allows $\mu$ to possess ``frame-like'' Fourier expansions. These ``frame-like'' Fourier expansions results are also extended to higher dimensions in \cite{Herr2022Fourier}, \cite{Herr2020Harmonic}, and \cite{Berner2024Fourier}. They are also discussed for measures not of pure type in \cite{Berner2024Frame}.

The primary result of this paper is that all frames generated by not surjective operator orbits arise from the Kaczmarz algorithm, and that these operators are similar to rank one perturbations of unitary operators (Theorem \ref{kaczmarzclass} and Corollary \ref{Tform}).  The other main goal of this paper is to classify all finite Borel measures $\mu$ on $[0,1)$ that possess sequences $\{T^{n}g_{0}\}_{n\in \mathbb{Z}}\subseteq L^{2}(\mu)$ where $T$ is a bounded invertible operator and $g_{0}\in L^{2}(\mu)$ that are Poisson dextrodual to $\{e^{2\pi i nx}\}_{n\in \mathbb{Z}}$.

However, we did not achieve a full classification of these measures, but we did classify measures of the specific case when $\{T^{n}g_{0}\}_{n\in \mathbb{Z}}$ is a frame (Corollary \ref{classcor}). Additionally, we were also able to describe a class of measures where $\{T^{n}g_{0}\}_{n\in \mathbb{Z}}$ is Poisson dextrodual to $\{e^{2\pi i nx}\}_{n\in \mathbb{Z}}$, and $\{T^{n}g_{0}\}_{n\in \mathbb{Z}}$ is not a frame (Corollary \ref{suffcase}).

The outline of this paper is as follows: In section \hyperref[S2]{2}, we show that every frame arising from not surjective operator orbits is the auxiliary sequence of an effective pair from the dual Kaczmarz algorithm and also the auxiliary sequence of an effective stationary sequence from the Kaczmarz algorithm in an equivalent Hilbert space. We also show that a not surjective operator generates an over-complete frame only if it is similar to a specific rank one perturbation of a unitary operator. In section \hyperref[S3]{3}, we describe a large class of frames arising from operator orbits that are dextrodual to the exponential functions in the case of singular measures on $[0,1)$. Then in section \hyperref[S4]{4}, we provide necessity conditions for Borel measures on $[0,1)$ to possess a two-sided operator orbit that is dextrodual to the exponential functions. Furthermore, in section \hyperref[S5]{5}, we show a special case when these conditions are sufficient. Moreover in section \hyperref[S6]{6}, we classify all finite Borel measures on $[0,1)$ that possess a two-sided operator orbit frame that is dextrodual to the exponential functions. Finally in section \hyperref[S7]{7}, we provide a couple of classification results for measures possessing certain types of frame-like Fourier expansions.  

Throughout this paper, we denote $M_{g}$ as the multiplication operator on an $L^{2}$ space by element $g$, we denote $\mathbb{1}$ as the constant $1$ function in an $L^{2}$ space, and we denote $\langle \cdot , \cdot \rangle_{H}$ and $\langle \cdot , \cdot \rangle_{v}$ as inner products in Hilbert space $H$ and $L^{2}(v)$ respectively. We will also assume our Hilbert spaces are infinite dimensional.

\section{Operator orbits and the Kaczmarz algorithm}\label{S2}
In this section, we show every frame of the form $\{T^{n}g_{0}\}_{n=0}^{\infty}$ in Hilbert space $H$ arises from an effective dual Kaczmarz algorithm in $H$ when $T$ is not surjective, and it is dextrodual to an operator orbit, which is not a Bessel sequence. We also show that $\{T^{n}g_{0}\}_{n=0}^{\infty}$ arises from an effective Kaczmarz algorithm in an equivalent Hilbert space. Finally, in this section we show that not surjective operators generating over-complete frames are rank one perturbations of unitary operators up to similarity. For more discussion about rank one perturbations of unitary operators, see \cite{Clark1972One}. Additionally, it was shown by Szwarc in \cite{Szwarc2007Kaczmarz} that every sequence $\{g_{n}\}_{n}$ with Bessel bound at most 1 such that $g_{0}$ is a unit vector is obtained by the Kaczmarz algorithm. Our approach however, does not assume the first vector is a unit vector and looks at the operator orbit case.
To prove these results, we need some results in \cite{Berner2024Frame} and \cite{Herr2017Fourier}.

\begin{defn}
Recall the Hardy space on the disk:
$$H^{2}(\mathbb{D})=\{w\to \sum_{n=0}^{\infty}c_{n}w^{n}: \sum_{n=0}^{\infty}|c_{n}|^{2}<\infty\}.$$
Furthermore, an \textbf{inner function} $b(w)$ is a bounded analytic function on $\mathbb{D}$ such that 
$$\|b(w)f(w)\|_{H^{2}(\mathbb{D})}=\|f(w)\|_{H^{2}(\mathbb{D})}$$ for any $f\in H^{2}(\mathbb{D})$.
\end{defn}
The following result is inspired by Beurling in \cite{Beurling1948Two} where the author classified all subspaces of the Hardy space that are invariant under the forward shift:

\begin{prop}[Proposition 5.3 \cite{Berner2024Frame}]\label{backwardop}
If $\{g_{n}\}_{n=0}^{\infty}$ is a frame in Hilbert space $H$ and not a Riesz basis, then $$\{\sum_{n=0}^{\infty}\langle f, g_{n}\rangle z^{n}: f\in H\}=[b(z)H^{2}(\mathbb{D})]^{\perp}$$ for some inner function $b(z)$ if and only if there is a bounded operator that maps $g_{n}$ to $g_{n+1}$ for all $n$.
\end{prop}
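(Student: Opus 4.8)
The plan is to pass to the coefficient space and read the statement off Beurling's theorem. Identify $\ell^{2}(\mathbb{N}_{0})$ with $H^{2}(\mathbb{D})$ via $\{c_{n}\}\mapsto\sum_{n}c_{n}z^{n}$, and let $\theta\colon H\to H^{2}(\mathbb{D})$ be the analysis operator $\theta f=\sum_{n=0}^{\infty}\langle f,g_{n}\rangle z^{n}$, so that the subspace in the statement is exactly $M:=\theta(H)$. Since $\{g_{n}\}$ is a frame, $\theta$ is bounded and bounded below, hence injective with closed range, so $\theta$ is a Banach space isomorphism of $H$ onto $M$; moreover $\theta^{*}z^{n}=g_{n}$ for every $n$. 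I would first observe that the hypothesis ``not a Riesz basis'' is equivalent to $M\subsetneq H^{2}(\mathbb{D})$: if $M=H^{2}(\mathbb{D})$ then $\theta$ is an isomorphism onto all of $H^{2}(\mathbb{D})$, so its adjoint (the synthesis operator $\{c_{n}\}\mapsto\sum c_{n}g_{n}$) is injective, which says precisely that $\{g_{n}\}$ is a Riesz basis; the converse is similar.

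The heart of the argument is the equivalence: \emph{a bounded operator $T$ on $H$ with $Tg_{n}=g_{n+1}$ for all $n$ exists if and only if $M$ is invariant under the backward shift $S^{*}$ on $H^{2}(\mathbb{D})$.} For one direction, if such a $T$ exists then for every $f\in H$ and every $n$ the $n$-th coefficient of $\theta(T^{*}f)$ is $\langle T^{*}f,g_{n}\rangle=\langle f,Tg_{n}\rangle=\langle f,g_{n+1}\rangle$, which is the $n$-th coefficient of $S^{*}\theta f$; hence $\theta T^{*}=S^{*}\theta$ and therefore $S^{*}M=\theta(T^{*}H)\subseteq M$. For the other direction, assume $S^{*}M\subseteq M$. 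Then $B:=\theta^{-1}\,(S^{*}|_{M})\,\theta$ is a bounded operator on $H$ satisfying $\theta B=S^{*}\theta$; taking adjoints gives $B^{*}\theta^{*}=\theta^{*}S$, and so $T:=B^{*}$ is bounded with $Tg_{n}=T\theta^{*}z^{n}=\theta^{*}z^{n+1}=g_{n+1}$.

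To finish, I would apply Beurling's theorem: $M$ is $S^{*}$-invariant exactly when $M^{\perp}$ is invariant under the forward shift, i.e.\ $M^{\perp}=\{0\}$ or $M^{\perp}=b(z)H^{2}(\mathbb{D})$ for some inner function $b$. By the first paragraph the case $M^{\perp}=\{0\}$ is excluded because $\{g_{n}\}$ is not a Riesz basis, so $S^{*}$-invariance of $M$ is equivalent to $M=[b(z)H^{2}(\mathbb{D})]^{\perp}$ for some inner $b$; combined with the equivalence of the second paragraph, this is exactly the claim. I expect the only delicate point — rather than a genuine obstacle — to be the adjoint bookkeeping in the second paragraph: the condition $Tg_{n}=g_{n+1}$ translates into invariance under the \emph{backward} shift (not the forward shift), which is why the subspace turns out to be a model space $[bH^{2}(\mathbb{D})]^{\perp}$ and not $bH^{2}(\mathbb{D})$; and one must not forget to use ``not a Riesz basis'' so that Beurling's theorem provides an honest inner function.
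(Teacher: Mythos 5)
Your argument is correct. Note that this paper does not prove the proposition itself --- it imports it from the authors' earlier work \cite{Berner2024Frame} --- so there is no in-paper proof to compare against; but your route (the analysis operator $\theta$ is an isomorphism onto its closed range $M$, the existence of a bounded $T$ with $Tg_{n}=g_{n+1}$ is equivalent via the intertwining $\theta T^{*}=S^{*}\theta$ to backward-shift invariance of $M$, and Beurling's theorem applied to the nonzero forward-invariant subspace $M^{\perp}$ supplies the inner function) is exactly the standard argument the cited result is built on, and you correctly use ``not a Riesz basis'' to rule out $M^{\perp}=\{0\}$.
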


\begin{cor}[Corollary 5.1 \cite{Berner2024Frame}]\label{eqcor}
Suppose $\{T^{n}g_{0}\}_{n=0}^{\infty}=\{g_{n}\}$ is a frame in Hilbert space $H$ and not a Riesz basis where $T\in B(H)$, $g_{0}\in H$, and $\{T^{n}g_{0}\}_{n=1}^{\infty}$ is not complete. Then there is an auxiliary sequence $\{h_{n}\}$ of $\{e^{2\pi i nx}\}_{n=0}^{\infty}$ in $L^{2}(v)$ where $v$ is a singular Borel probability measure on $[0,1)$ such that
$$\langle S^{-1}g_{n},g_{k}\rangle = \langle h_{n},h_{k}\rangle $$ for all $n$ and $k$.
\end{cor}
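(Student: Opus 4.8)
The plan is to transport the problem into a Beurling-type model subspace of the Hardy space using Proposition~\ref{backwardop}, recognize the transported frame as the orbit of the constant function under a compressed shift, and on the measure side identify that orbit with the Kaczmarz auxiliary sequence of the exponentials for the Clark measure of the relevant inner function. First I would pass to the Parseval case: letting $S$ be the frame operator of $\{g_{n}\}$ and $p_{n}:=S^{-1/2}g_{n}$, the sequence $\{p_{n}\}$ is a Parseval frame, still not a Riesz basis, with $p_{n}=\widetilde{T}^{n}p_{0}$ for $\widetilde{T}:=S^{-1/2}TS^{1/2}$, and $\{p_{n}\}_{n\geq 1}$ is still not complete since $S^{-1/2}$ is invertible. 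Because $S^{-1}$ is positive, $\langle S^{-1}g_{n},g_{k}\rangle_{H}=\langle p_{n},p_{k}\rangle_{H}$, so it is enough to realize the Gram matrix of $\{p_{n}\}$ as that of a Kaczmarz auxiliary sequence of $\{e^{2\pi i nx}\}$ in some $L^{2}(v)$; note also that the coefficient maps $f\mapsto\sum_{n}\langle f,g_{n}\rangle z^{n}$ and $f\mapsto\sum_{n}\langle f,p_{n}\rangle z^{n}$ have the same range, so Proposition~\ref{backwardop} applies with the same inner function.

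By Proposition~\ref{backwardop} the coefficient map $V\colon f\mapsto\sum_{n}\langle f,p_{n}\rangle z^{n}$ maps $H$ onto $K_{b}:=[bH^{2}(\mathbb{D})]^{\perp}$ for some inner function $b$, and since $\{p_{n}\}$ is Parseval, $V$ is a unitary of $H$ onto $K_{b}$, so $\langle p_{n},p_{k}\rangle_{H}=\langle Vp_{n},Vp_{k}\rangle_{H^{2}}$. Incompleteness of $\{p_{n}\}_{n\geq 1}$ produces a nonzero $f$ with $Vf$ a nonzero constant, so the constant functions lie in $K_{b}$, which forces $b(0)=0$; then $Vp_{0}$ is the reproducing kernel of $K_{b}$ at $0$, i.e.\ the constant function $\mathbb{1}$. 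I would then let $v$ be the probability measure on $[0,1)$ whose Herglotz transform is $\tfrac{1+b}{1-b}$; since $b$ is inner, $v$ is singular, and $\{e^{2\pi i nx}\}_{n\geq 0}$ is a complete sequence of unit vectors in $L^{2}(v)$, so its Kaczmarz auxiliary sequence $\{h_{n}\}$ is defined.

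The heart of the argument is a rigidity step. The sequence $\{Vp_{n}\}=\{\widetilde{T}_{b}^{n}\mathbb{1}\}$, with $\widetilde{T}_{b}:=V\widetilde{T}V^{-1}$, is a Parseval frame for $K_{b}$ that is an operator orbit of $\mathbb{1}$; its coefficient map $x\mapsto\sum_{n}\langle x,Vp_{n}\rangle z^{n}$ is, by evaluating on $x=Vf$, the inclusion $K_{b}\hookrightarrow H^{2}$, and it intertwines $\widetilde{T}_{b}^{*}$ with the backward shift $S^{*}$; hence $\widetilde{T}_{b}^{*}=S^{*}|_{K_{b}}=S_{b}^{*}$, so $\widetilde{T}_{b}$ is the compressed shift $S_{b}=P_{K_{b}}M_{z}|_{K_{b}}$ and $Vp_{n}=S_{b}^{n}\mathbb{1}=P_{K_{b}}(z^{n})$. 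Thus $\langle S^{-1}g_{n},g_{k}\rangle_{H}=\langle P_{K_{b}}(z^{n}),P_{K_{b}}(z^{k})\rangle_{H^{2}}$. On the measure side, by Herr--Weber \cite{Herr2017Fourier} the auxiliary sequence $\{h_{n}\}$ is a Parseval frame and its coefficient map $\phi\mapsto\sum_{n}\langle\phi,h_{n}\rangle z^{n}$ is the normalized Cauchy transform, a unitary $\mathcal{K}\colon L^{2}(v)\to K_{b}$, and one checks that $\mathcal{K}h_{n}=P_{K_{b}}(z^{n})$. Therefore $\langle h_{n},h_{k}\rangle_{v}=\langle\mathcal{K}h_{n},\mathcal{K}h_{k}\rangle_{H^{2}}=\langle P_{K_{b}}(z^{n}),P_{K_{b}}(z^{k})\rangle_{H^{2}}=\langle S^{-1}g_{n},g_{k}\rangle_{H}$, which is the claim.

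The routine parts are the reduction to the Parseval case, the passage to $K_{b}$, and extracting $b(0)=0$ from incompleteness of the tail. The real work sits in two places. First, the rigidity step: that an operator-orbit Parseval frame of the constant function in $K_{b}$ with incomplete tail must be the compressed-shift orbit $\{P_{K_{b}}(z^{n})\}$; the backward-shift intertwining sketched above should handle this. Second, and harder, the identification on the measure side: that the normalized Cauchy transform carries the Kaczmarz auxiliary sequence of $\{e^{2\pi i nx}\}$ in $L^{2}(v)$ exactly onto $\{P_{K_{b}}(z^{n})\}$, and that the inner function furnished by Proposition~\ref{backwardop} is genuinely the same $b$ whose Clark measure is $v$ (up to the harmless rotation ambiguity $K_{\lambda b}=K_{b}$). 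This last point is where the results quoted from \cite{Herr2017Fourier}, together with the Clark--Poltoratski description of model spaces and the explicit formula for the Cauchy transform of the exponentials, carry the load, and I expect it to be the main obstacle to a fully self-contained proof.
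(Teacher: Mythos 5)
Your proposal is correct and follows essentially the route the authors intend: pass to the Parseval frame $\{S^{-1/2}g_n\}$, use Proposition~\ref{backwardop} to realize its analysis operator as a unitary onto a model space $K_b$, use incompleteness of the tail to get $b(0)=0$, take $v$ to be the Clark measure of $b$, and invoke the Herr--Weber identification of the Kaczmarz analysis operator with the normalized Cauchy transform onto the same $K_b$. One simplification: the compressed-shift rigidity step is unnecessary, since $\langle Vf, Vp_k\rangle_{H^2}=\langle f,p_k\rangle_H$ is the $k$th Taylor coefficient of $Vf$, i.e.\ equals $\langle Vf, P_{K_b}(z^k)\rangle_{H^2}$ for all $f$, which gives $Vp_k=P_{K_b}(z^k)$ directly (and the same argument identifies $\mathcal{K}h_k$ on the measure side).
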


The following result by Christensen and Hasannasab in \cite{Christensen2019Frame}, lets us apply Corollary \ref{eqcor} to the case when $T$ is not surjective:

\begin{prop}[Proposition 2.5 \cite{Christensen2019Frame}]
If $\{T^{n}g_{0}\}_{n=0}^{\infty}=\{g_{n}\}$ is a frame in Hilbert space $H$, then the range of $T$ is $\overline{span\{T^{n}g_{0}\}_{n=1}^{\infty}}$.
\end{prop}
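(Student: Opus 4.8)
The plan is to reduce the statement to the single claim that the range of $T$ is closed. Two inclusions are immediate. Since $g_{n+1}=Tg_{n}$, we have $span\{g_{n}\}_{n=1}^{\infty}\subseteq \operatorname{ran}T$. Conversely, because the frame $\{g_{n}\}_{n=0}^{\infty}$ is complete, every $f\in H$ is a norm limit of finite linear combinations $\sum c_{n}g_{n}$, so applying the bounded operator $T$ and using $Tg_{n}=g_{n+1}$ gives $Tf\in\overline{span\{g_{n}\}_{n=1}^{\infty}}$; hence $\operatorname{ran}T\subseteq\overline{span\{g_{n}\}_{n=1}^{\infty}}$. Taking closures, $\overline{\operatorname{ran}T}=\overline{span\{g_{n}\}_{n=1}^{\infty}}$, so the proposition follows once we show $\operatorname{ran}T$ is closed, which by the closed range theorem is equivalent to $\operatorname{ran}T^{*}$ being closed.

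To analyze $T^{*}$, I would exploit that the analysis operator intertwines $T^{*}$ with the backward shift on $\ell^{2}$: for all $f\in H$ and $n\ge 0$,
$$\langle T^{*}f,g_{n}\rangle=\langle f,Tg_{n}\rangle=\langle f,g_{n+1}\rangle.$$
Applying the Bessel bound of $\{g_{n}\}$ to the vector $T^{*}f$ and then the lower frame bound to $f$,
$$B\,\|T^{*}f\|^{2}\ge\sum_{n=0}^{\infty}|\langle T^{*}f,g_{n}\rangle|^{2}=\sum_{n=1}^{\infty}|\langle f,g_{n}\rangle|^{2}=\Big(\sum_{n=0}^{\infty}|\langle f,g_{n}\rangle|^{2}\Big)-|\langle f,g_{0}\rangle|^{2}\ge A\|f\|^{2}-|\langle f,g_{0}\rangle|^{2}.$$
In particular, whenever $f\perp g_{0}$ we obtain $\|T^{*}f\|^{2}\ge (A/B)\|f\|^{2}$; that is, $T^{*}$ is bounded below on the closed subspace $\{g_{0}\}^{\perp}$, which has codimension at most $1$.

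To conclude, I would invoke the standard fact that an operator which is bounded below on a closed subspace of finite codimension has closed range: the image $T^{*}(\{g_{0}\}^{\perp})$ is closed (being the image of a complete space under a bounded-below map), and $\operatorname{ran}T^{*}=T^{*}(\{g_{0}\}^{\perp})+span\{T^{*}g_{0}\}$ is the sum of a closed subspace and a subspace of dimension at most $1$, hence closed. Therefore $\operatorname{ran}T$ is closed, and combined with the first paragraph this yields $\operatorname{ran}T=\overline{span\{T^{n}g_{0}\}_{n=1}^{\infty}}$.

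The main obstacle is precisely the passage from the frame inequality to closedness of the range: the displayed estimate shows that the lower frame bound only barely fails to make $T^{*}$ bounded below on all of $H$ — it fails solely in the one direction $g_{0}$ — and this "deficiency of one dimension" must be absorbed through the finite-codimension argument. One should also bear in mind that $\{g_{n}\}_{n=1}^{\infty}$ is in general only a Bessel sequence and not a frame, which is why only the upper bound is available when the frame inequalities are applied to $T^{*}f$.
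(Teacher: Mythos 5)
Your proof is correct. Note that the paper you are being compared against does not actually prove this statement: it is quoted verbatim from Christensen--Hasannasab \cite{Christensen2019Frame}, so there is no internal proof to match. Your argument is a valid, self-contained derivation: the two inclusions giving $\overline{\operatorname{ran}T}=\overline{\operatorname{span}\{g_{n}\}_{n\ge 1}}$ are routine, and the real content is closedness of $\operatorname{ran}T$, which you obtain by showing $T^{*}$ is bounded below on $\{g_{0}\}^{\perp}$ and then absorbing the one-dimensional deficiency via the fact that a closed subspace plus a finite-dimensional one is closed, together with the closed range theorem. The argument in the cited source instead factors $T$ through the synthesis operator: writing $U^{*}$ for the (surjective) synthesis map of the frame and $R$ for the right shift on $\ell^{2}$, one has $TU^{*}=U^{*}R$, so $\operatorname{ran}T=U^{*}(R\ell^{2})$ is the continuous image of a Banach space with finite codimension in $H$, hence closed. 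Both routes hinge on exactly the same codimension-one phenomenon (the missing index $n=0$); yours has the advantage of using only the frame inequalities and elementary Hilbert-space facts, without invoking the open mapping theorem for operator ranges.
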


Additionally, the following result was inspired by the work of Kwapien and Mycielski in \cite{Kwapien2001Kaczmarz} where the authors classified all effective stationary sequences:

\begin{thm}[Theorem 1, Proposition 1 \cite{Herr2017Fourier}]\label{singular}
    If $v$ is a singular Borel probability measure on $[0,1)$, then $\{e^{2\pi i nx}\}_{n=0}^{\infty}$ is effective in $L^{2}(v)$.
    Furthermore, let $\{g_{n}\}$ be the auxiliary sequence of $\{e^{2\pi i nx}\}_{n=0}^{\infty}$. Then
    $$\{\sum_{n=0}^{\infty}\langle f, g_{n}\rangle z^{n}: f\in L^{2}(\mu)\}=[b(z)H^{2}(\mathbb{D})]^{\perp}$$ for some inner function $b(z)$.
\end{thm}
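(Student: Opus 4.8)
The plan is to deduce both assertions from the Aleksandrov--Clark description of model spaces, after identifying $[0,1)$ with the circle via $t\mapsto e^{2\pi it}$. First I would record that, since $v$ is singular, $\{e^{2\pi inx}\}_{n=0}^{\infty}$ is complete in $L^{2}(v)$: a function $f\in L^{2}(v)$ orthogonal to all of them gives a measure $f\,dv$ all of whose nonnegative Fourier coefficients vanish, so by the F.\ and M.\ Riesz theorem $f\,dv$ is absolutely continuous with respect to Lebesgue measure, whence $f=0$ because $f\,dv\ll v$ and $v$ is singular. Thus the auxiliary sequence $\{g_{n}\}_{n=0}^{\infty}$ is well defined, and by the Haller--Szwarc theorem quoted above ``$\{e^{2\pi inx}\}_{n=0}^{\infty}$ is effective'' is \emph{equivalent} to ``$\{g_{n}\}$ is a Parseval frame'', i.e.\ to the analysis operator $f\mapsto(\langle f,g_{n}\rangle_{v})_{n\ge0}$ being an isometry of $L^{2}(v)$ into $\ell^{2}$. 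Identifying $\ell^{2}$ with $H^{2}(\mathbb D)$ by $(c_{n})_{n}\mapsto\sum_{n}c_{n}z^{n}$, it then suffices to prove the single statement that $\Phi f:=\sum_{n=0}^{\infty}\langle f,g_{n}\rangle_{v}\,z^{n}$ is a unitary of $L^{2}(v)$ onto $[b(z)H^{2}(\mathbb D)]^{\perp}$ for a suitable inner function $b$; this yields at once both the first claim (isometry $\Rightarrow$ Parseval $\Rightarrow$ effective) and the ``furthermore'' (the range identification).

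Next I would build the inner function. Let $F(z)=\int_{0}^{1}\frac{e^{2\pi it}+z}{e^{2\pi it}-z}\,dv(t)$ be the Herglotz transform of $v$; it is analytic on $\mathbb D$ with $\operatorname{Re}F>0$ and $F(0)=1$. Set $b:=\frac{F-1}{F+1}$, a Schur function with $b(0)=0$. With the Cauchy transform $\mathcal C f(z):=\int_{0}^{1}\frac{f(t)}{1-ze^{-2\pi it}}\,dv(t)=\sum_{n\ge0}\langle f,e^{2\pi inx}\rangle_{v}\,z^{n}$ one computes $\mathcal C\mathbb{1}=\tfrac12(F+1)$, so $\operatorname{Re}\mathcal C\mathbb{1}>\tfrac12$ on $\mathbb D$ and $1-b=1/\mathcal C\mathbb{1}$. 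The one genuinely computational step is the identity $\Phi f=\mathcal Cf/\mathcal C\mathbb{1}$, i.e.\ $\mathcal Cf=\Phi f\cdot\mathcal C\mathbb{1}$: from the Kaczmarz recursion and $\|e^{2\pi inx}\|_{v}=1$ one gets $e^{2\pi inx}=\sum_{k=0}^{n}\langle e^{2\pi inx},e^{2\pi ikx}\rangle_{v}\,g_{k}$, hence $\langle f,e^{2\pi inx}\rangle_{v}=\sum_{k=0}^{n}\widehat v(n-k)\,\langle f,g_{k}\rangle_{v}$, which is exactly the Cauchy-product coefficient identity for $\mathcal Cf=\bigl(\sum_{k}\langle f,g_{k}\rangle_{v}z^{k}\bigr)\bigl(\sum_{m}\widehat v(m)z^{m}\bigr)$; since the series converge near $0$ and $\mathcal C\mathbb{1}$ is zero-free on $\mathbb D$, the identity persists as analytic functions on $\mathbb D$. (This alone does not say $\Phi f\in H^{2}$; that is precisely the isometry proved next.)

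The crux is the Aleksandrov--Clark input. Because $v$ is singular, its Poisson integral $\operatorname{Re}F(re^{i\theta})\to0$ for a.e.\ $\theta$ as $r\to1^{-}$; combined with $\operatorname{Re}F=\frac{1-|b|^{2}}{|1-b|^{2}}$ this forces $|b|=1$ a.e.\ on the circle, so $b$ is inner (and nonconstant, else $v$ would equal Lebesgue measure). The identity $\frac{1+b}{1-b}=F=\int_{0}^{1}\frac{e^{2\pi it}+z}{e^{2\pi it}-z}\,dv(t)$ exhibits $v$ as the Aleksandrov--Clark measure of $b$ at the parameter $1$, and Clark's theorem (see \cite{Clark1972One}) is then exactly the statement that $f\mapsto(1-b)\,\mathcal Cf=\mathcal Cf/\mathcal C\mathbb{1}=\Phi f$ is a unitary of $L^{2}(v)$ onto the model space $[bH^{2}(\mathbb D)]^{\perp}$. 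I expect this to be the main obstacle: citing Clark makes it immediate, but a self-contained proof must instead use the $v$-a.e.\ boundary values of functions in $[bH^{2}(\mathbb D)]^{\perp}$ (pseudocontinuation, valid since $b$ is inner) together with the weak-$*$ convergence of the radial boundary measures of $\operatorname{Re}F$ to $v$, and that analysis carries essentially all of the work.

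Finally I would assemble the pieces: $\Phi$ is an isometry, so the analysis operator of $\{g_{n}\}$ is an isometry, so $\{g_{n}\}$ is a Parseval frame, so $\{e^{2\pi inx}\}_{n=0}^{\infty}$ is effective in $L^{2}(v)$ by Haller--Szwarc; and $\{\sum_{n\ge0}\langle f,g_{n}\rangle_{v}z^{n}:f\in L^{2}(v)\}=\Phi(L^{2}(v))=[bH^{2}(\mathbb D)]^{\perp}$ with $b$ inner, which is the second assertion. As an aside, the effectiveness part alone also follows directly from the Kwapien--Mycielski classification of effective stationary sequences, since $\{e^{2\pi inx}\}_{n=0}^{\infty}$ is stationary in $L^{2}(v)$ with spectral measure $v$; the route above is preferred here because it produces the inner function $b$ required for the second assertion in the same stroke.
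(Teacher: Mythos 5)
This statement is imported verbatim from \cite{Herr2017Fourier} and the paper offers no proof of its own, so the relevant comparison is with that source: your route via the Herglotz transform $F$, the inner function $b=(F-1)/(F+1)$, the coefficient identity $\mathcal{C}f=\Phi f\cdot\mathcal{C}\mathbb{1}$ extracted from the Kaczmarz recursion, and Clark's unitary $f\mapsto(1-b)\mathcal{C}f$ onto $[b(z)H^{2}(\mathbb{D})]^{\perp}$ is essentially the same argument given there (with effectiveness alternatively available from the Kwapien--Mycielski classification, as you note). The reconstruction is correct, including the completeness step via F.\ and M.\ Riesz and the deduction that $b$ is inner from $\operatorname{Re}F=\frac{1-|b|^{2}}{|1-b|^{2}}$ having vanishing radial limits a.e.\ for singular $v$.
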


\begin{thm}\label{genbackward}
Let $\{T^{n}g_{0}\}_{n=0}^{\infty}=\{g_{n}\}$ be a frame and not a Riesz basis in Hilbert space $H$ where $T\in B(H)$, $T$ is not surjective, and $g_{0}\in H$. Then there is a singular Borel probability measure $v$ on $[0,1)$ and unitary $U\in B(H, L^{2}(v))$ such that $\{T^{n}g_{0}\}_{n=0}^{\infty}$ is the auxiliary sequence of effective pair 
$$(S^{\frac{1}{2}}U^{*}(e^{2\pi i nx}),S^{-\frac{1}{2}}U^{*}(e^{2\pi i nx}))$$ where $S$ denotes the frame operator associated with $\{T^{n}g_{0}\}_{n=0}^{\infty}$.
\end{thm}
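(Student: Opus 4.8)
The plan is to feed the hypothesis ``$T$ not surjective'' into Corollary~\ref{eqcor}, which hands us a singular probability measure $v$ together with the auxiliary sequence $\{h_{n}\}$ of $\{e^{2\pi i nx}\}_{n=0}^{\infty}$ in $L^{2}(v)$ whose Gram matrix agrees with that of $\{S^{-\frac12}g_{n}\}$; then use the rigidity of Parseval frames under coincidence of Gram matrices to manufacture the unitary $U$; and finally transport the effectiveness of $\{e^{2\pi i nx}\}$ in $L^{2}(v)$ (Theorem~\ref{singular}) back to $H$ through $U$ and $S^{\pm\frac12}$.

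First I would check applicability of Corollary~\ref{eqcor}. By Proposition~2.5 of \cite{Christensen2019Frame} the range of $T$ equals $\overline{\operatorname{span}}\{T^{n}g_{0}\}_{n=1}^{\infty}$; since $T$ is not surjective this is a proper closed subspace, so $\{T^{n}g_{0}\}_{n=1}^{\infty}$ is not complete, and Corollary~\ref{eqcor} produces a singular Borel probability measure $v$ on $[0,1)$ and the auxiliary sequence $\{h_{n}\}$ of $\{e^{2\pi i nx}\}_{n=0}^{\infty}$ in $L^{2}(v)$ with $\langle S^{-1}g_{n},g_{k}\rangle_{H}=\langle h_{n},h_{k}\rangle_{v}$ for all $n,k$. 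Next I would construct $U$. Put $g_{n}'=S^{-\frac12}g_{n}$; a one-line computation shows $\{g_{n}'\}$ is a Parseval frame for $H$ (its frame operator is $S^{-\frac12}SS^{-\frac12}=I$), and by Theorem~\ref{singular} together with the Haller--Szwarc characterization \cite{Haller2005Kaczmarz}, $\{h_{n}\}$ is a Parseval frame for $L^{2}(v)$; their Gram matrices agree because $\langle g_{n}',g_{k}'\rangle_{H}=\langle S^{-1}g_{n},g_{k}\rangle_{H}=\langle h_{n},h_{k}\rangle_{v}$. Writing $D_{g'}\colon \ell^{2}\to H$ and $D_{h}\colon \ell^{2}\to L^{2}(v)$ for the two synthesis operators, set $U:=D_{h}D_{g'}^{*}$. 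Since both frames are Parseval, $D_{g'}^{*}D_{g'}=D_{h}^{*}D_{h}$ (the common Gram operator, an orthogonal projection onto a subspace of $\ell^{2}$ containing the ranges of $D_{g'}^{*}$ and $D_{h}^{*}$), so $U^{*}U=D_{g'}D_{g'}^{*}=I_{H}$ and $UU^{*}=D_{h}D_{h}^{*}=I_{L^{2}(v)}$; thus $U$ is unitary with $Ug_{n}'=h_{n}$, equivalently $U^{*}h_{n}=S^{-\frac12}g_{n}$ and hence $S^{\frac12}U^{*}h_{n}=g_{n}$.

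Now set $\phi_{n}:=S^{\frac12}U^{*}(e^{2\pi i nx})$ and $\psi_{n}:=S^{-\frac12}U^{*}(e^{2\pi i nx})$. Completeness of $\{e^{2\pi i nx}\}_{n=0}^{\infty}$ in $L^{2}(v)$ (built into the notion of effectiveness in Theorem~\ref{singular}) and the invertibility of $S^{\pm\frac12}$ make $\{\phi_{n}\}$ and $\{\psi_{n}\}$ complete in $H$, and $\langle\phi_{n},\psi_{n}\rangle_{H}=\langle U^{*}e^{2\pi i nx},U^{*}e^{2\pi i nx}\rangle_{H}=\|e^{2\pi i nx}\|_{v}^{2}=1$, so $(\phi_{n},\psi_{n})$ is a legitimate dual-Kaczmarz pair; moreover $\langle\phi_{n},\psi_{k}\rangle_{H}=\langle e^{2\pi i nx},e^{2\pi i kx}\rangle_{v}$. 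Since $\{h_{n}\}$ is by definition the auxiliary sequence of $\{e^{2\pi i nx}\}$, applying the bounded linear map $S^{\frac12}U^{*}$ to the defining recursion of $\{h_{n}\}$ and substituting $\langle\phi_{n},\psi_{k}\rangle_{H}=\langle e^{2\pi i nx},e^{2\pi i kx}\rangle_{v}$ shows by induction that $S^{\frac12}U^{*}h_{n}$ satisfies exactly the recursion defining the auxiliary sequence of the pair $(\phi_{n},\psi_{n})$; as $S^{\frac12}U^{*}h_{n}=g_{n}$, the auxiliary sequence of $(\phi_{n},\psi_{n})$ is precisely $\{g_{n}\}=\{T^{n}g_{0}\}_{n=0}^{\infty}$. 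For effectiveness, given $x\in H$ put $y:=US^{\frac12}x\in L^{2}(v)$; then $\langle x,g_{n}\rangle_{H}=\langle S^{\frac12}x,U^{*}h_{n}\rangle_{H}=\langle y,h_{n}\rangle_{v}$, so $\sum_{n=0}^{N}\langle x,g_{n}\rangle_{H}\psi_{n}=S^{-\frac12}U^{*}\bigl(\sum_{n=0}^{N}\langle y,h_{n}\rangle_{v}e^{2\pi i nx}\bigr)\to S^{-\frac12}U^{*}y=x$, using that $\{e^{2\pi i nx}\}$ is effective in $L^{2}(v)$ with auxiliary sequence $\{h_{n}\}$ (Theorem~\ref{singular}) and that $S^{-\frac12}U^{*}$ is bounded.

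The heart of the argument — and the step I expect to require the most care — is the passage from ``equal Gram matrices'' to the unitary $U$ and the verification that $U$ (together with $S^{\pm\frac12}$) conjugates the Kaczmarz recursion for $\{e^{2\pi i nx}\}$ in $L^{2}(v)$ into the dual-Kaczmarz recursion for $(\phi_{n},\psi_{n})$ in $H$; once the identity $\langle\phi_{n},\psi_{k}\rangle_{H}=\langle e^{2\pi i nx},e^{2\pi i kx}\rangle_{v}$ is in hand, the remaining steps are routine bookkeeping with the invertibility of $S^{\pm\frac12}$ and the unitarity of $U$.
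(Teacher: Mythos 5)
Your proposal is correct and follows essentially the same route as the paper: invoke Corollary~\ref{eqcor} (justified via Proposition~2.5 of \cite{Christensen2019Frame}) to obtain $v$ and the Gram-matrix identity, build the unitary $U$ intertwining $\{S^{-\frac12}g_{n}\}$ with $\{h_{n}\}$, push the Kaczmarz recursion for $\{h_{n}\}$ through $S^{\frac12}U^{*}$ by induction, and transport effectiveness from Theorem~\ref{singular} via the bounded map $S^{-\frac12}U^{*}$. The only difference is that you spell out the construction of $U$ from the coinciding Gram matrices of the two Parseval frames (via $U=D_{h}D_{g'}^{*}$), a step the paper leaves implicit in its citation of Corollary~\ref{eqcor}.
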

\begin{proof}
By Corollary \ref{eqcor}, we see that there is a singular Borel probability measure $v$ on $[0,1)$ and an isometry $U\in B(H, L^{2}(v))$ such that $$U(S^{-\frac{1}{2}}g_{n})=h_{n}$$ for all $n$ where $\{h_{n}\}$ is the auxiliary sequence of $\{e^{2\pi i nx}\}_{n=0}^{\infty}$ in $L^{2}(v)$. Note that $U$ is a unitary since $\{e^{2\pi i nx}\}_{n=0}^{\infty}$ is effective in $L^{2}(v)$. Let $\{g_{n}'\}_{n=0}^{\infty}$ be the auxiliary sequence of the pair $(S^{\frac{1}{2}}U^{*}(e^{2\pi i nx}),S^{-\frac{1}{2}}U^{*}(e^{2\pi i nx}))$.

First note that $$g_{0}'=S^{\frac{1}{2}}U^{*}(\mathbb{1})=g_{0}.$$

Now suppose that $g_{k}'=g_{k}$ for all $0\leq k\leq n-1$.
We have
\begin{equation}\label{auxeq}
g_{n}'=S^{\frac{1}{2}}U^{*}(e^{2\pi i nx})-\sum_{k=0}^{n-1}\langle S^{\frac{1}{2}}U^{*}(e^{2\pi i nx}), S^{-\frac{1}{2}}U^{*}(e^{2\pi i kx})\rangle_{H} g_{k}.
\end{equation}
Now by the recursive definition of the $\{h_{n}\}$, we have
\begin{equation}\label{recureq}
\begin{split}
S^{\frac{1}{2}}U^{*}(e^{2\pi i nx})&=S^{\frac{1}{2}}U^{*}(h_{n}+\sum_{k=0}^{n-1}\langle e^{2\pi i nx}, e^{2\pi i kx}\rangle_{v}h_{k})\\
&=g_{n}+\sum_{k=0}^{n-1}\langle e^{2\pi i nx}, e^{2\pi i kx}\rangle_{v}g_{k}.\\  
\end{split}
\end{equation}
Now because 
$$\langle S^{\frac{1}{2}}U^{*}(e^{2\pi i nx}), S^{-\frac{1}{2}}U^{*}(e^{2\pi i kx})\rangle_{H}=\langle e^{2\pi i nx}, e^{2\pi i kx}\rangle_{v}$$ for all $n,k$, we have
$$g_{n}=g_{n}'$$ for all $n$ by substituting equation (\ref{recureq}) into equation (\ref{auxeq}).

Now to show $(S^{\frac{1}{2}}U^{*}(e^{2\pi i nx}),S^{-\frac{1}{2}}U^{*}(e^{2\pi i nx}))$ is an effective pair, let $x\in H$ and for each $n$, let $$x_{n}=\sum_{k=0}^{n}\langle x, g_{k}'\rangle S^{-\frac{1}{2}}U^{*}(e^{2\pi i kx})=\sum_{k=0}^{n}\langle x, g_{k}\rangle S^{-\frac{1}{2}}U^{*}(e^{2\pi i kx}).$$ Define for each $n$, $$y_{n}=\sum_{k=0}^{n}\langle US^{\frac{1}{2}}x, h_{k}\rangle e^{2\pi i kx}=\sum_{k=0}^{n}\langle x, g_{k}\rangle e^{2\pi i kx}.$$
Now by Theorem \ref{singular}, we have for all $x\in H$,
$$
y_{n}\to US^{\frac{1}{2}}x \implies S^{-\frac{1}{2}}U^{*}y_{n}=x_{n}\to x.$$
Therefore, $(S^{\frac{1}{2}}U^{*}(e^{2\pi i nx}),S^{-\frac{1}{2}}U^{*}(e^{2\pi i nx}))$ is an effective pair.
\end{proof}
\begin{rem}
Note that if $\{g_{n}\}_{n=0}^{\infty}$ is a Riesz basis in Theorem \ref{genbackward} with frame operator $S$, it is easy to see that $\{g_{n}\}_{n=0}^{\infty}$ is the auxiliary sequence of the effective pair $(g_{n}, S^{-1}g_{n})$. This follows because $\{S^{-1}g_{n}\}$ is the biorthogonal sequence of $\{g_{n}\}_{n=0}^{\infty}$ and because $\{g_{n}\}_{n=0}^{\infty}$ is dextrodual to $\{S^{-1}g_{n}\}$.
\end{rem}

As a result, every over-complete frame of the form 
$\{T^{n}g_{0}\}_{n=0}^{\infty}$ when $T$ is not surjective is dextrodual to an orbit sequence which is not a frame:
\begin{cor}\label{framelikedualcor}
Let $\{T^{n}g_{0}\}_{n=0}^{\infty}=\{g_{n}\}$ be a frame and not a Riesz basis in Hilbert space $H$ where $T\in B(H)$, $T$ is not surjective, and $g_{0}\in H$. Define $U$ and $S$ from Theorem \ref{genbackward}. Then $\{g_{n}\}_{n=0}^{\infty}$ is dextrodual to 
$$\{\mathcal{M}^{n}(S^{-\frac{1}{2}}U^{*}(\mathbb{1}))\}_{n=0}^{\infty}$$ in $H$, which is not a Bessel sequence where $\mathcal{M}=S^{-\frac{1}{2}}U^{*}M_{e^{2\pi i x}}US^{\frac{1}{2}}$.

\end{cor}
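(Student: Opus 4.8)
The plan is to observe that Theorem~\ref{genbackward} has already carried out all of the analysis, so that the corollary is essentially a repackaging. Abbreviate $\psi_n := S^{-\frac12}U^*(e^{2\pi i nx}) \in H$, so that the conclusion of Theorem~\ref{genbackward} — that $\big(S^{\frac12}U^*(e^{2\pi i nx}),\,\psi_n\big)$ is an effective pair whose auxiliary sequence is $\{g_n\}$ — says precisely that $\sum_{n=0}^{k}\langle x, g_n\rangle_H\,\psi_n \to x$ for every $x\in H$. Thus two things remain: (i) the orbit identity $\psi_n = \mathcal{M}^n\big(S^{-\frac12}U^*(\mathbb{1})\big)$, which converts "effective pair" into "$\{g_n\}$ is dextrodual to $\{\mathcal{M}^n(S^{-\frac12}U^*(\mathbb{1}))\}_{n=0}^\infty$"; and (ii) that this orbit sequence fails to be Bessel.

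For (i) I would induct. The base case is immediate, since $e^{2\pi i\cdot 0\cdot x} = \mathbb{1}$ gives $\psi_0 = S^{-\frac12}U^*(\mathbb{1})$. For the step, using $S^{\frac12}S^{-\frac12} = \mathrm{Id}_H$, $UU^* = \mathrm{Id}_{L^2(v)}$, and $M_{e^{2\pi i x}}(e^{2\pi i nx}) = e^{2\pi i(n+1)x}$, one computes
\[
\mathcal{M}\psi_n = \big(S^{-\frac12}U^* M_{e^{2\pi i x}} U S^{\frac12}\big)\big(S^{-\frac12}U^*(e^{2\pi i nx})\big) = S^{-\frac12}U^*\big(e^{2\pi i(n+1)x}\big) = \psi_{n+1},
\]
so $\psi_n = \mathcal{M}^n\psi_0$ for all $n$. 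This part is routine bookkeeping with the intertwining operators.

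For (ii), note that $V := U S^{\frac12}$ is a bounded invertible operator from $H$ onto $L^2(v)$ (the frame operator $S$ is positive and invertible, $U$ is unitary), with inverse $S^{-\frac12}U^*$, and $V\psi_n = e^{2\pi i nx}$. Since a bounded invertible operator carries Bessel sequences to Bessel sequences in both directions, $\{\psi_n\}_{n=0}^\infty$ is Bessel in $H$ if and only if $\{e^{2\pi i nx}\}_{n=0}^\infty$ is Bessel in $L^2(v)$. I would then kill the latter possibility by testing on the single vector $\mathbb{1}$, for which $\|\mathbb{1}\|_v^2 = v([0,1)) = 1$: a finite Bessel bound $B$ would force $\sum_{n\ge 0}\big|\int_0^1 e^{-2\pi i nx}\,dv(x)\big|^2 \le B < \infty$, and since $v$ is a positive (hence real) measure its Fourier coefficients are conjugate-symmetric, so the full two-sided Fourier coefficient sequence of $v$ lies in $\ell^2(\mathbb{Z})$. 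The main — and really the only non-formal — step is the classical fact that this forces $v$ to be absolutely continuous with density in $L^2([0,1))$: that $\ell^2$ sequence is the Fourier coefficient sequence of some $g \in L^2([0,1))$, and uniqueness of Fourier coefficients of finite Borel measures on the torus gives $dv = g\,dx$, contradicting the singularity of $v$. Hence $\{e^{2\pi i nx}\}_{n=0}^\infty$, and therefore $\{\mathcal{M}^n(S^{-\frac12}U^*(\mathbb{1}))\}_{n=0}^\infty$, is not a Bessel sequence.
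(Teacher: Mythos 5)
Your proposal is correct and follows essentially the same route as the paper: the dextrodual claim is read off from the effective pair in Theorem~\ref{genbackward} (after the routine check that $\mathcal{M}^n(S^{-\frac12}U^*(\mathbb{1}))=S^{-\frac12}U^*(e^{2\pi i nx})$, which the paper leaves implicit), and the non-Bessel claim is obtained by testing the putative Bessel bound on a single vector to force the Fourier coefficients of the singular probability measure $v$ into $\ell^2$, a contradiction. Your transfer of the Bessel property through the invertible operator $V=US^{\frac12}$ before testing on $\mathbb{1}$ produces exactly the same inner products $\langle\mathbb{1},e^{2\pi i nx}\rangle_v$ that the paper gets by testing directly on $g_0=S^{\frac12}U^*(\mathbb{1})$, so the two arguments coincide.
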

\begin{proof}
We have already shown in Theorem \ref{genbackward}, that $\{g_{n}\}_{n=0}^{\infty}$ is dextrodual to 
$\{\mathcal{M}^{n}(S^{-\frac{1}{2}}U^{*}(\mathbb{1}))\}_{n=0}^{\infty}$ in $H$.

Now assume for the sake of contradiction that $\{\mathcal{M}^{n}(S^{-\frac{1}{2}}U^{*}(\mathbb{1}))\}_{n=0}^{\infty}$ is a Bessel sequence. 
Then we have in particular,
$$\sum_{n=0}^{\infty}|\langle S^{\frac{1}{2}}U^{*}(\mathbb{1}),S^{-\frac{1}{2}}U^{*}(e^{2\pi i nx})\rangle_{H}|^{2}<\infty.$$
But this implies that the Fourier coefficients of $v$ are square summable, which means that $v$ has the same Fourier coefficients as an absolutely continuous complex measure. Then since $v$ is singular, it is the zero measure, which is a contradiction.
\end{proof}

Theorem \ref{genbackward} will also give us that when $T$ is not surjective, $\{T^{n}g_{0}\}_{n=0}^{\infty}$ is an over-complete frame only if $T$ is a rank one perturbation of multiplication by the exponential up to similarity.

\begin{cor}\label{Tform}
Let $\{T^{n}g_{0}\}_{n=0}^{\infty}=\{g_{n}\}$ be a frame in Hilbert space $H$ and not a Riesz basis where $T\in B(H)$, $T$ is not surjective, and $g_{0}\in H$. Define $U$ and $S$ from Theorem \ref{genbackward}. Then for any $f\in H$,
$$Tf=(S^{\frac{1}{2}}U^{*}M_{e^{2\pi i x}}US^{-\frac{1}{2}})f-\langle f, S^{-\frac{1}{2}}U^{*}(e^{-2\pi ix})\rangle g_{0}.$$

Conversely, for any Hilbert space $H$, any singular Borel probability measure $v$ on $[0,1)$, and any $V\in B(H, L^{2}(v))$ that is invertible, define $T\in B(H)$ where for $f\in H$,
$$Tf=V^{-1}M_{e^{2\pi i x}}Vf-\langle f, V^{*}(e^{-2\pi ix})\rangle V^{-1}(\mathbb{1}).$$
Then $\{T^{n}(V^{-1}(\mathbb{1}))\}_{n=0}^{\infty}$ is a frame and not a Riesz basis in $H$.
\end{cor}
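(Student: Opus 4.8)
The plan is to reduce both implications to a single recursion for the auxiliary sequence $\{h_{n}\}$ of $\{e^{2\pi i nx}\}_{n=0}^{\infty}$ in $L^{2}(v)$, namely
\[ h_{n+1}=e^{2\pi i x}h_{n}-\langle h_{n},e^{-2\pi i x}\rangle_{v}\mathbb{1},\qquad n\geq 0, \]
which we refer to as $(\star)$. Write $V:=US^{-\frac{1}{2}}\in B(H,L^{2}(v))$; since $U$ is unitary and $S^{\frac{1}{2}}$ is invertible, $V$ is invertible with $V^{-1}=S^{\frac{1}{2}}U^{*}$ and $V^{*}=S^{-\frac{1}{2}}U^{*}$. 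A telescoping computation gives $V\bigl(S^{\frac{1}{2}}U^{*}M_{e^{2\pi i x}}US^{-\frac{1}{2}}\bigr)V^{-1}=M_{e^{2\pi i x}}$, and one has $g_{0}=S^{\frac{1}{2}}U^{*}(\mathbb{1})=V^{-1}(\mathbb{1})$ and $\langle f,S^{-\frac{1}{2}}U^{*}(e^{-2\pi i x})\rangle_{H}=\langle Vf,e^{-2\pi i x}\rangle_{v}$. Hence, after conjugation by $V$, the formula claimed for $T$ is equivalent to the statement that $VTV^{-1}$ is the operator $\varphi\mapsto e^{2\pi i x}\varphi-\langle\varphi,e^{-2\pi i x}\rangle_{v}\mathbb{1}$ on $L^{2}(v)$.

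For the forward implication, Theorem \ref{genbackward} identifies $\{g_{n}\}=\{T^{n}g_{0}\}$ as the auxiliary sequence of the effective pair $(S^{\frac{1}{2}}U^{*}(e^{2\pi i nx}),S^{-\frac{1}{2}}U^{*}(e^{2\pi i nx}))$, and its construction (through Corollary \ref{eqcor}) gives $Vg_{n}=US^{-\frac{1}{2}}g_{n}=h_{n}$ for every $n$. Since $T$ and the operator on the right-hand side of the claimed formula are both bounded and $\{g_{n}\}$ is complete in $H$, it is enough to check the formula on each $g_{n}$; applying $V$ and using $Vg_{n}=h_{n}$ and $V^{-1}(\mathbb{1})=g_{0}$, this collapses to exactly $(\star)$. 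So the forward implication rests entirely on $(\star)$.

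To prove $(\star)$ I would induct on $n$, using only the defining recursion $h_{n}=e^{2\pi i nx}-\sum_{k=0}^{n-1}\langle e^{2\pi i nx},e^{2\pi i kx}\rangle_{v}h_{k}$ and the shift-invariance $\langle e^{2\pi i nx},e^{2\pi i kx}\rangle_{v}=\langle e^{2\pi i(n+1)x},e^{2\pi i(k+1)x}\rangle_{v}$ (both sides equal $\int e^{2\pi i(n-k)x}\,dv$). Multiplying the recursion for $h_{n}$ by $e^{2\pi i x}$, substituting the inductive hypothesis $e^{2\pi i x}h_{k}=h_{k+1}+\langle h_{k},e^{-2\pi i x}\rangle_{v}\mathbb{1}$ for $k<n$, re-indexing the resulting sum, and comparing with the recursion for $h_{n+1}$, everything cancels down to the scalar identity
\[ \langle h_{n},e^{-2\pi i x}\rangle_{v}=\langle e^{2\pi i nx},e^{-2\pi i x}\rangle_{v}-\sum_{k=0}^{n-1}\langle e^{2\pi i nx},e^{2\pi i kx}\rangle_{v}\langle h_{k},e^{-2\pi i x}\rangle_{v}, \]
which is the pairing of the $h_{n}$-recursion against $e^{-2\pi i x}$ together with $\langle e^{2\pi i nx},e^{-2\pi i x}\rangle_{v}=\langle e^{2\pi i(n+1)x},\mathbb{1}\rangle_{v}$. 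This induction is the one genuinely computational point, and I expect it to be the main (if modest) obstacle.

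For the converse, $T\in B(H)$ since it is the sum of $V^{-1}M_{e^{2\pi i x}}V$ and a bounded rank-one operator. Set $g_{n}:=V^{-1}(h_{n})$. Since $v$ is a singular probability measure, $\{e^{2\pi i nx}\}_{n=0}^{\infty}$ is effective in $L^{2}(v)$ by Theorem \ref{singular}, so by the result of Haller and Szwarc \cite{Haller2005Kaczmarz} its auxiliary sequence $\{h_{n}\}$ is a Parseval frame; hence $\{g_{n}\}$, the image of a frame under the bounded invertible operator $V^{-1}$, is a frame. It is not a Riesz basis: a Parseval frame that is also a Riesz basis is an orthonormal basis, and then the map $f\mapsto\sum_{n}\langle f,h_{n}\rangle z^{n}$, which by Theorem \ref{singular} is a unitary of $L^{2}(v)$ onto $[b(z)H^{2}(\mathbb{D})]^{\perp}$ for some inner function $b$, would carry $\{h_{n}\}$ onto the orthonormal basis $\{z^{n}\}_{n\geq 0}$ of $H^{2}(\mathbb{D})$, forcing $[b(z)H^{2}(\mathbb{D})]^{\perp}=H^{2}(\mathbb{D})$, i.e.\ $b(z)H^{2}(\mathbb{D})=\{0\}$, which is impossible for an inner function; so neither $\{h_{n}\}$ nor $\{g_{n}\}$ is a Riesz basis. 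Finally $g_{0}=V^{-1}(h_{0})=V^{-1}(\mathbb{1})$, and using $Vg_{n}=h_{n}$ together with $(\star)$,
\[ Tg_{n}=V^{-1}(e^{2\pi i x}h_{n})-\langle h_{n},e^{-2\pi i x}\rangle_{v}V^{-1}(\mathbb{1})=V^{-1}\bigl(e^{2\pi i x}h_{n}-\langle h_{n},e^{-2\pi i x}\rangle_{v}\mathbb{1}\bigr)=V^{-1}(h_{n+1})=g_{n+1}, \]
so by induction $T^{n}(V^{-1}(\mathbb{1}))=g_{n}$ and $\{T^{n}(V^{-1}(\mathbb{1}))\}_{n=0}^{\infty}=\{g_{n}\}$ is a frame and not a Riesz basis.
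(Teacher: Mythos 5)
Your proof is correct, and its central step is genuinely different from the paper's. For the forward formula the paper does not touch the Kaczmarz recursion at all: it invokes the dextroduality from Corollary \ref{framelikedualcor} to write $\mathcal{M}T^{*}f=\sum_{n}\langle f,T^{n+1}g_{0}\rangle\mathcal{M}^{n+1}(S^{-\frac{1}{2}}U^{*}\mathbb{1})=f-\langle f,g_{0}\rangle S^{-\frac{1}{2}}U^{*}(\mathbb{1})$ and then takes adjoints and inverts $\mathcal{M}^{*}$; likewise, in the converse it obtains the explicit form of the shift $L$ on $L^{2}(v)$ (with $h_{n}=L^{n}\mathbb{1}$, whose existence comes from Proposition \ref{backwardop}) by the same effectiveness/adjoint computation. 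You instead prove the one-step identity $(\star)$, $h_{n+1}=e^{2\pi ix}h_{n}-\langle h_{n},e^{-2\pi ix}\rangle_{v}\mathbb{1}$, by induction directly from the defining recursion of the auxiliary sequence and shift-invariance of the moments $\int e^{2\pi i(n-k)x}\,dv$, and then transport everything through $V=US^{-\frac{1}{2}}$ using $Vg_{n}=h_{n}$ (which, as you note, still requires Theorem \ref{genbackward}/Corollary \ref{eqcor}, so the reduction to $L^{2}(v)$ is shared). Your route is more elementary and self-contained: it never needs the non-Bessel dual sequence, it avoids manipulating the conditionally ordered dextrodual series and its adjoint, and it yields both the existence and the explicit form of the shift on the $h_{n}$ in one stroke, at the cost of the (routine but unavoidable) induction you flagged. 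The paper's route is shorter once Corollary \ref{framelikedualcor} is in hand and makes the structural point that $T^{*}$ inverts the shift up to a rank-one defect. Your non-Riesz-basis argument (a Parseval Riesz basis is orthonormal, which would force $[b(z)H^{2}(\mathbb{D})]^{\perp}=H^{2}(\mathbb{D})$) is a mild repackaging of the paper's observation that the analysis operator of $\{h_{n}\}$ fails to have dense range; both are fine.
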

\begin{proof}
Let $\mathcal{M}=S^{-\frac{1}{2}}U^{*}M_{e^{2\pi i x}}US^{\frac{1}{2}}$. By Corollary \ref{framelikedualcor}, we have
$$\mathcal{M}T^{*}f=\sum_{n=0}^{\infty}\langle f, T^{n+1}g_{0}\rangle \mathcal{M}^{n+1}(S^{-\frac{1}{2}}U^{*}(\mathbb{1}))= f-\langle f, g_{0}\rangle S^{-\frac{1}{2}}U^{*}(\mathbb{1})$$ for all $f\in H$. By adjoint calculations, we get
$$Tf=(\mathcal{M}^{*})^{-1}f-\langle f, \mathcal{M}^{-1}S^{-\frac{1}{2}}U^{*}(\mathbb{1})\rangle g_{0}.$$

Conversely, let $\{h_{n}\}$ be the auxiliary sequence of $\{e^{2\pi i nx}\}_{n=0}^{\infty}$ in $L^{2}(v)$. As a result of Theorem \ref{singular}, there is an $L\in B(L^{2}(v))$ such that $$h_{n}=L^{n}(\mathbb{1})$$ for all $n$. By a similar argument due to the effectiveness of $\{e^{2\pi i nx}\}_{n=0}^{\infty}$, we get can get that
$$Lf=e^{2\pi i x}f-\langle f, e^{-2\pi ix}\rangle$$ for any $f\in L^{2}(v)$. Now note that
$$V^{-1}LV=T\implies T^{n}(V^{-1}(\mathbb{1}))=V^{-1}(h_{n})$$ for all $n$. Therefore since $\{h_{n}\}$ is a Parseval frame, for all $f\in H$,
$$\sum_{n=0}^{\infty}|\langle f, T^{n}(V^{-1}(\mathbb{1}))\rangle |^{2}=\sum_{n=0}^{\infty}|\langle f, V^{-1}(h_{n})\rangle |^{2}=\|(V^{-1})^{*}(f)\|^{2}, $$
showing that $\{T^{n}(V^{-1}(\mathbb{1}))\}_{n=0}^{\infty}$ is a frame in $H$. 
Finally, note that the analysis operator associated with $\{h_{n}\}$ does not have dense range as a result of Theorem \ref{singular}, making its synthesis operator not injective. Therefore, $\{T^{n}(V^{-1}(\mathbb{1}))\}_{n=0}^{\infty}$ is not a Riesz basis since it is similar to a frame with not injective synthesis operator.
\end{proof}

Furthermore, we can show that frames of the form $\{T^{n}g_{0}\}_{n=0}^{\infty}$ when $T$ is not surjective arise from an effective Kaczmarz algorithm in an equivalent Hilbert space:

\begin{thm}\label{kaczmarzclass}
Let $\{T^{n}g_{0}\}_{n=0}^{\infty}=\{g_{n}\}$ be a frame and not a Riesz basis in Hilbert space $H$ where $T$ is not surjective. Define $U$ and $S$ from Theorem \ref{genbackward}, and $H'$ as the Hilbert space that is a renorming of $H$ where for $f, g\in H$
$$\langle f, g\rangle_{H'}=\langle S^{-\frac{1}{2}}f,S^{-\frac{1}{2}}g\rangle_{H}.$$
Then $\{g_{n}\}$ is the auxiliary sequence of effective sequence 
$$\{(S^{\frac{1}{2}}U^{*}M_{e^{2\pi i x}}US^{-\frac{1}{2}})^{n}(g_{0})\}_{n=0}^{\infty}$$ in $H'$.
Furthermore, $S^{\frac{1}{2}}U^{*}M_{e^{2\pi i x}}US^{-\frac{1}{2}}$ is a unitary on $H$ if and only if
$$S=U^{*}M_{g}U$$ for some $g\in L^{\infty}(v)$.

\end{thm}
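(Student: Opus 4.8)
The plan is to move the whole problem into $L^{2}(v)$ through one explicit unitary and then quote Theorem~\ref{singular} and Corollary~\ref{eqcor}. Write $\mathcal{N}=S^{\frac12}U^{*}M_{e^{2\pi ix}}US^{-\frac12}$; this is invertible, being a product of invertible operators, with $\mathcal{N}^{-1}=S^{\frac12}U^{*}M_{e^{-2\pi ix}}US^{-\frac12}$. Set $W=US^{-\frac12}$, regarded as a map from $H'$ into $L^{2}(v)$. Using the defining inner product of $H'$ one gets $\langle Wf,Wg\rangle_{v}=\langle S^{-\frac12}f,S^{-\frac12}g\rangle_{H}=\langle f,g\rangle_{H'}$, so $W$ is an isometry; it is onto because $S^{\frac12}$ is a bijection of $H$ and $U$ is onto, so $W\colon H'\to L^{2}(v)$ is unitary. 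From $U(S^{-\frac12}g_{n})=h_{n}$ (the identity of Corollary~\ref{eqcor} used in Theorem~\ref{genbackward}, where $\{h_{n}\}$ is the auxiliary sequence of $\{e^{2\pi inx}\}_{n=0}^{\infty}$ in $L^{2}(v)$) we obtain $Wg_{n}=h_{n}$, in particular $Wg_{0}=h_{0}=\mathbb{1}$. Finally $W\mathcal{N}=US^{-\frac12}\cdot S^{\frac12}U^{*}M_{e^{2\pi ix}}US^{-\frac12}=M_{e^{2\pi ix}}W$, so $W\mathcal{N}^{n}=M_{e^{2\pi ix}}^{n}W$ and $W(\mathcal{N}^{n}g_{0})=M_{e^{2\pi ix}}^{n}\mathbb{1}=e^{2\pi inx}$ for every $n$.

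With these identifications the first assertion is transport of structure. Since $v$ is a probability measure and $W$ is unitary, each $\mathcal{N}^{n}g_{0}$ is a unit vector in $H'$, and $\{\mathcal{N}^{n}g_{0}\}_{n=0}^{\infty}$ is complete in $H'$ because $\{e^{2\pi inx}\}_{n=0}^{\infty}$ is a complete sequence of unit vectors in $L^{2}(v)$ (which underlies Theorem~\ref{singular}). The Kaczmarz auxiliary-sequence construction is defined purely through inner products and finite linear combinations, so it intertwines with the unitary $W$: the auxiliary sequence of $\{\mathcal{N}^{n}g_{0}\}$ in $H'$ is $W^{-1}$ applied to the auxiliary sequence of $\{e^{2\pi inx}\}_{n=0}^{\infty}$ in $L^{2}(v)$, namely $\{W^{-1}h_{n}\}=\{g_{n}\}$. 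Likewise $W$ sends the partial sum $\sum_{k=0}^{n}\langle x,g_{k}\rangle_{H'}\mathcal{N}^{k}g_{0}$ to $\sum_{k=0}^{n}\langle Wx,h_{k}\rangle_{v}e^{2\pi ikx}$, which converges to $Wx$ by Theorem~\ref{singular}; unitarity of $W$ then gives convergence of the original partial sums to $x$ in $H'$. Hence $\{\mathcal{N}^{n}g_{0}\}_{n=0}^{\infty}$ is effective in $H'$ with auxiliary sequence $\{g_{n}\}$.

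For the equivalence, expand the $H$-adjoint directly:
\[
\mathcal{N}^{*}\mathcal{N}=S^{-\frac12}U^{*}M_{e^{-2\pi ix}}\,(USU^{*})\,M_{e^{2\pi ix}}US^{-\frac12}.
\]
Since $\mathcal{N}$ is invertible, it is a unitary on $H$ iff $\mathcal{N}^{*}\mathcal{N}=I_{H}$, which (the outer factors $S^{-\frac12}U^{*}$ and $US^{-\frac12}$ being invertible) simplifies to $M_{e^{-2\pi ix}}(USU^{*})M_{e^{2\pi ix}}=USU^{*}$, i.e., to $USU^{*}$ commuting with $M_{e^{2\pi ix}}$. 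If $S=U^{*}M_{g}U$ with $g\in L^{\infty}(v)$, then $USU^{*}=M_{g}$ commutes with $M_{e^{2\pi ix}}$, so $\mathcal{N}$ is unitary. Conversely, if $USU^{*}$ commutes with $M_{e^{2\pi ix}}$, then, $M_{e^{2\pi ix}}$ being invertible in $B(L^{2}(v))$ with inverse $M_{e^{-2\pi ix}}$, it commutes with $M_{p}$ for every trigonometric polynomial $p$; putting $g:=(USU^{*})\mathbb{1}$, one checks $(USU^{*})p=gp$ for all trigonometric $p$, and then boundedness of $USU^{*}$ together with density of the trigonometric polynomials in $L^{2}(v)$ forces $g\in L^{\infty}(v)$ and $USU^{*}=M_{g}$, so $S=U^{*}M_{g}U$.

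The operator identities ($W\mathcal{N}=M_{e^{2\pi ix}}W$ and the expansion of $\mathcal{N}^{*}\mathcal{N}$) are bookkeeping, and once $W$ is in place the transport arguments are routine. The one step with genuine content is the last: a bounded operator on $L^{2}(v)$ that commutes with $M_{e^{2\pi ix}}$ must be a multiplication operator $M_{g}$ with $g\in L^{\infty}(v)$ — equivalently, $\{M_{g}:g\in L^{\infty}(v)\}$ is maximal abelian in $B(L^{2}(v))$. The essential boundedness of $g=(USU^{*})\mathbb{1}$ is the crux; I expect to obtain it most cleanly by a Fatou-lemma argument comparing $\|gp\|_{v}$ with $\|USU^{*}\|\,\|p\|_{v}$ along trigonometric polynomials approximating indicator functions, since the naive pointwise passage to the limit is not available without a priori knowing $g\in L^{\infty}(v)$.
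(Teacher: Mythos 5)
Your argument is correct, and it differs from the paper's proof only in how the two key facts are packaged. For the first assertion, the paper verifies the auxiliary-sequence identity by the same induction as in Theorem \ref{genbackward} and then obtains effectiveness from the Haller--Szwarc criterion: $\{S^{-\frac{1}{2}}g_{n}\}$ is a Parseval frame in $H$, hence $\{g_{n}\}$ is a Parseval frame in $H'$, hence the generating sequence is effective. You instead set up the single unitary $W=US^{-\frac{1}{2}}\colon H'\to L^{2}(v)$, check that $W\mathcal{N}=M_{e^{2\pi ix}}W$ for $\mathcal{N}=S^{\frac{1}{2}}U^{*}M_{e^{2\pi ix}}US^{-\frac{1}{2}}$ and that $Wg_{n}=h_{n}$, and then transport both the Kaczmarz recursion and the convergence statement of Theorem \ref{singular} back to $H'$; this is the same underlying mechanism (the identification coming from Corollary \ref{eqcor}), organized more explicitly, and it has the small advantage of not needing the Parseval-frame characterization of effective sequences. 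For the second assertion both proofs reduce unitarity of $\mathcal{N}$ on $H$ to the commutation of $USU^{*}$ with $M_{e^{2\pi ix}}$; the paper then simply asserts that a bounded operator on $L^{2}(v)$ commuting with $M_{e^{2\pi ix}}$ must be a multiplication operator, while you supply the standard commutant argument (set $g=(USU^{*})\mathbb{1}$, check agreement on trigonometric polynomials, and use a Fatou estimate to get $\|g\|_{\infty}\leq\|USU^{*}\|$). That extra detail is welcome rather than superfluous, since it is the one step of the equivalence with genuine analytic content. I see no gaps.
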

\begin{proof}
Showing that $\{g_{n}\}$ is the auxiliary sequence of $$\{(S^{\frac{1}{2}}U^{*}M_{e^{2\pi i x}}US^{-\frac{1}{2}})^{n}(g_{0})\}_{n=0}^{\infty}$$ in $H'$ is similar to the argument in Theorem \ref{genbackward}.

It is known that $\{S^{-\frac{1}{2}}g_{n}\}$ is a Parseval frame in $H$; therefore, $\{g_{n}\}$ is a Parseval frame in $H'$. It follows that $\{(S^{\frac{1}{2}}U^{*}M_{e^{2\pi i x}}US^{-\frac{1}{2}})^{n}(g_{0})\}_{n=0}^{\infty}$ is effective in $H'$.

Now suppose that $$S=U^{*}M_{g}U$$ for some $g\in L^{\infty}(v)$. Then 
$$S^{\frac{1}{2}}=U^{*}M_{\sqrt{g}}U,$$ and it is easy to see that
$$S^{\frac{1}{2}}U^{*}M_{e^{2\pi i x}}US^{-\frac{1}{2}}=U^{*}M_{e^{2\pi i x}}U.$$

Finally, suppose that $S^{\frac{1}{2}}U^{*}M_{e^{2\pi i x}}US^{-\frac{1}{2}}$ is unitary.
Then by an adjoint and inverse calculation, we have
$$S^{-\frac{1}{2}}U^{*}M_{e^{-2\pi i x}}US^{\frac{1}{2}}=S^{\frac{1}{2}}U^{*}M_{e^{-2\pi i x}}US^{-\frac{1}{2}}.$$
Therefore, $S$ commutes with $U^{*}M_{e^{-2\pi i x}}U$. It follows that $USU^{*}$ commutes with $M_{e^{-2\pi i x}}$ so that
$USU^{*}$ is a multiplication operator on $L^{2}(v)$.
\end{proof}

\begin{rem}
If $\{g_{n}\}$ is a Riesz basis in Theorem \ref{kaczmarzclass} with frame operator $S$, then $\{g_{n}\}$ is an orthonormal basis in $H'$. It is clear that an orthonormal basis is the auxiliary sequence of itself.
\end{rem}

\section{Fourier series for singular measures and operator orbits}\label{S3}

To study frames of the form $\{T^{n}g_{0}\}_{n=0}^{\infty}$ that are dextrodual to 
$\{e^{2\pi i nx}\}_{n=0}^{\infty}$, initially one might try to answer the following question:  Can we classify finite Borel measures $\mu$ on $[0,1)$ such that there is a frame of the form $\{T^{n}g_{0}\}_{n=0}^{\infty}$ that is dextrodual to $\{e^{2\pi i nx}\}$?
But by the F. and M. Riesz theorem, a measure with this property must be singular, and by Theorem \ref{singular}, we know such expansions exist for all singular measures.
Instead in this section, we study the question: Given a finite singular Borel measure on $[0,1)$, can we describe all of the frames of the form $\{T^{n}g_{0}\}_{n=0}^{\infty}$ that are dextrodual to $\{e^{2\pi i nx}\}$ when $T$ is not surjective?
Our first result is that these types of Fourier expansions for singular measures are not unique, which is also discussed in \cite{Herr2019Positive}.

\begin{prop}\label{exist}
Let $\mu$ be a finite singular Borel measure on $[0,1)$ and $g_{0}\in L^{2}(\mu)$ such that
\begin{enumerate}
    \item $g_{0}$ is non-negative,
    \item $\langle g_{0},1\rangle=1$,
    \item $g_{0},\frac{1}{g_{0}}\in L^{\infty}(\mu)$.
\end{enumerate}
Let $g_{n}=g_{0}h_{n}$ for all $n$ where $\{h_{n}\}$ is the auxiliary sequence of $\{e^{2\pi i nx}\}_{n=0}^{\infty}$ in $L^{2}(g_{0}\mu)$. Then
$\{g_{n}\}$ is a frame of the form $\{T^{n}g_{0}\}_{n=0}^{\infty}$ for some operator $T$ that is dextrodual to $\{e^{2\pi i nx}\}_{n=0}^{\infty}$ in $L^{2}(\mu).$
\end{prop}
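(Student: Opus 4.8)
The plan is to transport the whole problem into $L^{2}(g_{0}\mu)$ and carry the conclusions back. Observe first that $g_{0}\mu$ is a Borel probability measure on $[0,1)$ by hypotheses (1) and (2), and it is singular since $g_{0}\mu\ll\mu$ and $\mu$ is singular, so Theorem \ref{singular} applies to $g_{0}\mu$. Moreover, because $g_{0},1/g_{0}\in L^{\infty}(\mu)$, the spaces $L^{2}(\mu)$ and $L^{2}(g_{0}\mu)$ coincide as vector spaces with equivalent norms, namely $\|1/g_{0}\|_{\infty}^{-1}\|f\|_{\mu}^{2}\le\|f\|_{g_{0}\mu}^{2}\le\|g_{0}\|_{\infty}\|f\|_{\mu}^{2}$. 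The identity on which everything rests is that for every $x\in L^{2}(\mu)=L^{2}(g_{0}\mu)$ we have $\langle x,g_{n}\rangle_{\mu}=\langle x,g_{0}h_{n}\rangle_{\mu}=\langle x,h_{n}\rangle_{g_{0}\mu}$, which is immediate from $g_{n}=g_{0}h_{n}$ and the fact that $g_{0}$ is real-valued.

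Granting this, the frame inequality and dextroduality both fall out quickly. Since $\{e^{2\pi inx}\}_{n=0}^{\infty}$ is effective in $L^{2}(g_{0}\mu)$ by Theorem \ref{singular}, the Haller--Szwarc characterization makes $\{h_{n}\}$ a Parseval frame in $L^{2}(g_{0}\mu)$; hence $\sum_{n}|\langle x,g_{n}\rangle_{\mu}|^{2}=\sum_{n}|\langle x,h_{n}\rangle_{g_{0}\mu}|^{2}=\|x\|_{g_{0}\mu}^{2}$, and the norm equivalence converts this into genuine upper and lower frame bounds for $\{g_{n}\}$ in $L^{2}(\mu)$. For dextroduality, $\sum_{n=0}^{k}\langle x,g_{n}\rangle_{\mu}e^{2\pi inx}=\sum_{n=0}^{k}\langle x,h_{n}\rangle_{g_{0}\mu}e^{2\pi inx}$, which converges to $x$ in $L^{2}(g_{0}\mu)$ by effectiveness (Theorem \ref{singular}), and once more the norm equivalence upgrades this to convergence in $L^{2}(\mu)$.

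It remains to produce an operator $T$ with $T^{n}g_{0}=g_{n}$, and here I would invoke Proposition \ref{backwardop}. We have already shown $\{g_{n}\}$ is a frame, and the coefficient space computes as $\{\sum_{n}\langle f,g_{n}\rangle_{\mu}z^{n}:f\in L^{2}(\mu)\}=\{\sum_{n}\langle f,h_{n}\rangle_{g_{0}\mu}z^{n}:f\in L^{2}(g_{0}\mu)\}=[b(z)H^{2}(\mathbb{D})]^{\perp}$ for some inner function $b$, by Theorem \ref{singular} applied to $g_{0}\mu$. Since $b(z)H^{2}(\mathbb{D})$ contains $b(z)\ne0$, the subspace $[b(z)H^{2}(\mathbb{D})]^{\perp}$ is a proper closed subspace of $H^{2}(\mathbb{D})$, so the analysis operator of $\{g_{n}\}$ does not have dense range, its synthesis operator is therefore not injective, and $\{g_{n}\}$ is not a Riesz basis. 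This is exactly the hypothesis (beyond being a frame) that Proposition \ref{backwardop} needs, and the same coefficient-space identity verifies the remaining hypothesis of that proposition with the same $b$; thus there is a bounded operator $T$ with $Tg_{n}=g_{n+1}$ for all $n$. Because $h_{0}=\mathbb{1}$, the zeroth term $g_{0}=g_{0}h_{0}$ is the prescribed vector $g_{0}$, and iterating gives $g_{n}=T^{n}g_{0}$, so $\{g_{n}\}=\{T^{n}g_{0}\}_{n=0}^{\infty}$ has all the asserted properties.

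I do not expect any single step to present a real obstacle: once one spots the change-of-measure identity $\langle x,g_{n}\rangle_{\mu}=\langle x,h_{n}\rangle_{g_{0}\mu}$, the argument is essentially bookkeeping built on Theorem \ref{singular} and Proposition \ref{backwardop}. The only point demanding a little care is verifying that $\{g_{n}\}$ is not a Riesz basis, which is needed before Proposition \ref{backwardop} can be applied; but Theorem \ref{singular}'s identification of the coefficient space as $[b(z)H^{2}(\mathbb{D})]^{\perp}$ makes even that routine.
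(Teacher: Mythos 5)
Your proof is correct and follows essentially the same route as the paper's: transport to $L^{2}(g_{0}\mu)$ via the identity $\langle x,g_{n}\rangle_{\mu}=\langle x,h_{n}\rangle_{g_{0}\mu}$, apply Theorem \ref{singular} and the norm equivalence, and obtain $T$ from Proposition \ref{backwardop}. You additionally spell out the frame bounds and the verification that $\{g_{n}\}$ is not a Riesz basis, which the paper leaves implicit in the phrase ``equivalent to an auxiliary sequence.''
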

\begin{proof}
By Theorem \ref{singular},
for all $f\in L^{2}(g_{0}\mu)$,
$$f=\sum_{n=0}^{\infty}\langle f,h_{n}\rangle_{g_{0}\mu}e^{2\pi i nx} =\sum_{n=0}^{\infty}\langle f, g_{n}\rangle_{\mu}e^{2\pi i nx}.$$

Since $L^{2}(g_{0}\mu)$ convergence is equivalent to $L^{2}(\mu)$ convergence from $M_{g_{0}}$ being invertible in $L^{2}(\mu)$, $\{g_{n}\}$ is dextrodual to $\{e^{2\pi i nx}\}$ in $L^{2}(\mu).$ Furthermore, $\{g_{n}\}$ is also equivalent to an auxiliary sequence, so by Proposition \ref{backwardop}, $\{g_{n}\}$ is of the desired form.
\end{proof}
However, we can explicitly state the form of frames $\{T^{n}g_{0}\}_{n=0}^{\infty}$ that are dextrodual to $\{e^{2\pi inx}\}$.
We also show that if a frame arising from a not surjective operator orbit is dextrodual to the exponential functions, then it arises from another effective dual Kaczmarz algorithm. 
To show this, we draw on another result in \cite{Berner2024Frame}.

\begin{lem}[Lemma 2.3 \cite{Berner2024Frame}]\label{RBL}
In Hilbert space $H$, if Riesz basis $\{g_{n}\}_{n=0}^{\infty}$ is dextrodual to $\{h_{n}\}_{n=0}^{\infty}$, then $\{h_{n}\}_{n=0}^{\infty}$ is a Riesz basis.
\end{lem}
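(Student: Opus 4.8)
The plan is to exploit the defining property of dextroduality together with the biorthogonal system attached to the Riesz basis $\{g_{n}\}$. Since $\{g_{n}\}_{n=0}^{\infty}$ is a Riesz basis of $H$, it has a (unique) biorthogonal Riesz basis $\{\tilde g_{n}\}_{n=0}^{\infty}$ with $\langle \tilde g_{m}, g_{n}\rangle = \delta_{mn}$; concretely $\tilde g_{n} = S^{-1}g_{n}$ where $S$ is the frame operator of $\{g_n\}$. The dextroduality hypothesis says that for every $x\in H$ the partial sums $\sum_{n=0}^{k}\langle x, x_{n}\rangle g_{n}$ converge to $x$ — wait, I have the roles swapped. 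Let me restate: dextroduality of $\{g_n\}$ to $\{h_n\}$ means $\sum_{n=0}^k \langle x, g_n\rangle h_n \to x$ for all $x$. First I would test this against $x = \tilde g_m$: then $\langle \tilde g_m, g_n\rangle = \delta_{mn}$, so the partial sums collapse and we get $h_m = \tilde g_m$ for every $m$. Hence $\{h_n\}_{n=0}^\infty$ coincides with the biorthogonal system of $\{g_n\}$, which is itself a Riesz basis, and we are done.

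More carefully, the key steps in order are: (1) record that a Riesz basis $\{g_n\}$ has a biorthogonal sequence $\{\tilde g_n\} = \{S^{-1}g_n\}$ which is again a Riesz basis, and that $\{g_n\}$ is dextrodual to $\{\tilde g_n\}$ (the standard unconditional reconstruction formula $x = \sum_n \langle x, g_n\rangle \tilde g_n$, a fortiori with the symmetric partial sums used in the definition of dextrodual); (2) apply the dextroduality hypothesis with $x$ running over the elements $\tilde g_m$, using $\langle \tilde g_m, g_n\rangle = \delta_{mn}$ to conclude $h_m = \tilde g_m$; (3) conclude $\{h_n\} = \{\tilde g_n\}$ is a Riesz basis.

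The only subtlety — and the step I would watch most carefully — is (2): I must make sure that plugging a fixed vector into the convergent series is legitimate, i.e. that the partial sums $\sum_{n=0}^k \langle \tilde g_m, g_n\rangle h_n$ are literally $h_m$ for all $k \geq m$, which is immediate from biorthogonality and requires no limiting argument at all; the convergence statement then forces the limit, namely $\tilde g_m$, to equal $h_m$. There is no real obstacle here: the lemma is essentially the observation that a dextrodual partner of a basis is forced to be the biorthogonal basis, hence unique. One should perhaps remark that completeness of $\{h_n\}$ (implicit in it being a dextrodual sequence capable of reconstructing every $x$) is automatic once we have identified it with $\{\tilde g_n\}$.
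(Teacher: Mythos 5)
Your proof is correct and follows the natural route (the lemma is only quoted in this paper from the cited reference, but this is the standard argument): testing the dextrodual expansion $\sum_{n=0}^{k}\langle x, g_{n}\rangle h_{n}\to x$ at $x=\tilde g_{m}=S^{-1}g_{m}$ collapses the partial sums to $h_{m}$ for all $k\geq m$ by biorthogonality, forcing $h_{m}=\tilde g_{m}$, so $\{h_{n}\}$ is the dual Riesz basis. The only blemish is the passing remark that the definition of dextrodual involves ``symmetric partial sums'' --- it uses one-sided ordered sums $\sum_{n=0}^{k}$, with symmetric sums appearing only in the Poisson dextrodual definition --- but that aside plays no role in your argument.
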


\begin{thm}\label{Mainsingular}
Let $\{T^{n}g_{0}\}_{n=0}^{\infty}=\{g_{n}\}$ be a frame that is dextrodual to $\{e^{2\pi i nx}\}_{n=0}^{\infty}$ in $L^{2}(\mu)$ where $T\in B(H)$, $T$ is not surjective, and $\mu$ is a finite singular Borel measure on $[0,1)$. Denote the frame operator of $\{g_{n}\}$ by $S$. Then
\begin{enumerate}
    \item For $f\in L^{2}(\mu)$, $Tf=e^{2\pi i x}f-\langle e^{2\pi i x}f,\mathbb{1}\rangle_{\mu} g_{0}$.
    \item $S(\mathbb{1})=g_{0}$.
    \item $\{g_{n}\}$ is the auxiliary sequence of the effective pair $(S(\mathbb{1})e^{2\pi i nx},e^{2\pi i nx})$ in $L^{2}(\mu)$.
\end{enumerate}

\end{thm}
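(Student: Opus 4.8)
The plan is to do the three parts in order, part~(1) being a direct manipulation of the dextroduality relation, part~(2) coming from Theorem~\ref{genbackward} combined with dextroduality, and part~(3) following from (1) and~(2) by an induction on the auxiliary-sequence recursion. For part~(1): fix $f\in L^{2}(\mu)$; since $\langle T^{*}f,g_{n}\rangle_{\mu}=\langle f,Tg_{n}\rangle_{\mu}=\langle f,g_{n+1}\rangle_{\mu}$, applying dextroduality to $T^{*}f$ gives $\sum_{n=0}^{N}\langle f,g_{n+1}\rangle_{\mu}e^{2\pi inx}\to T^{*}f$ in $L^{2}(\mu)$. Multiplying by $e^{2\pi ix}$ (a unitary multiplier on $L^{2}(\mu)$), reindexing, and discarding the tail term $\langle f,g_{N+1}\rangle_{\mu}e^{2\pi i(N+1)x}$, which tends to $0$ because a frame is Bessel, I get $\sum_{m=1}^{N}\langle f,g_{m}\rangle_{\mu}e^{2\pi imx}\to e^{2\pi ix}T^{*}f$. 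Subtracting this from the dextrodual expansion $\sum_{m=0}^{N}\langle f,g_{m}\rangle_{\mu}e^{2\pi imx}\to f$ leaves only the $m=0$ term, so $f-e^{2\pi ix}T^{*}f=\langle f,g_{0}\rangle_{\mu}\mathbb{1}$, whence $T^{*}f=e^{-2\pi ix}\bigl(f-\langle f,g_{0}\rangle_{\mu}\mathbb{1}\bigr)$; taking the adjoint gives the formula in~(1).

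For part~(2), note first that $\{g_{n}\}$ is not a Riesz basis: otherwise Lemma~\ref{RBL} would force $\{e^{2\pi inx}\}_{n=0}^{\infty}$ to be a Riesz basis, hence Bessel in $L^{2}(\mu)$, which (testing on $\mathbb{1}$, using that $\mu$ is real, together with the F.\ and M.\ Riesz theorem) would make $\mu$ absolutely continuous. Thus Theorem~\ref{genbackward} applies: there are a singular Borel probability measure $v$ and a unitary $U\colon L^{2}(\mu)\to L^{2}(v)$ with $g_{0}=S^{1/2}U^{*}(\mathbb{1}_{v})$ and $U(S^{-1/2}g_{n})=h_{n}$, where $\{h_{n}\}$ is the auxiliary sequence of $\{e^{2\pi inx}\}_{n=0}^{\infty}$ in $L^{2}(v)$. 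The key step is the identity $\langle f,g_{n}\rangle_{\mu}=\langle S^{1/2}f,S^{-1/2}g_{n}\rangle_{\mu}=\langle US^{1/2}f,h_{n}\rangle_{v}$, so inserting $f=S^{-1/2}U^{*}w$ into the dextroduality hypothesis yields $\sum_{n=0}^{N}\langle w,h_{n}\rangle_{v}e^{2\pi inx}\to S^{-1/2}U^{*}w$ in $L^{2}(\mu)$ for every $w\in L^{2}(v)$. Specializing to $w=\mathbb{1}_{v}=h_{0}$ and using the elementary fact $\langle h_{n},\mathbb{1}_{v}\rangle_{v}=\delta_{n0}$ (an induction on the defining recursion, using that $\mathbb{1}_{v}$ is a unit vector in $L^{2}(v)$), the left side collapses to $\mathbb{1}_{\mu}$, so $S^{-1/2}U^{*}(\mathbb{1}_{v})=\mathbb{1}_{\mu}$. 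Then $g_{0}=S^{1/2}U^{*}(\mathbb{1}_{v})=S^{1/2}\bigl(S^{1/2}\mathbb{1}_{\mu}\bigr)=S(\mathbb{1})$, which is~(2); and as a byproduct $\langle S\mathbb{1},\mathbb{1}\rangle_{\mu}=\|S^{1/2}\mathbb{1}\|_{\mu}^{2}=\|U^{*}\mathbb{1}_{v}\|_{\mu}^{2}=\|\mathbb{1}_{v}\|_{v}^{2}=1$, i.e.\ $\int g_{0}\,d\mu=1$.

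For part~(3), part~(1) and induction give $g_{n}=g_{0}h_{n}$ with $h_{0}=\mathbb{1}$ and $h_{n+1}=e^{2\pi ix}h_{n}-\gamma_{n+1}\mathbb{1}$, where $\gamma_{n+1}=\int e^{2\pi ix}g_{0}h_{n}\,d\mu$, and each $h_{n}$ is a monic trigonometric polynomial in $e^{2\pi ix}$ of degree $n$. Setting $c_{j}=\int e^{2\pi ijx}g_{0}\,d\mu$, the heart of the matter is the identity $h_{n}=e^{2\pi inx}-\sum_{k=0}^{n-1}c_{n-k}h_{k}$, which I would prove by strong induction: substitute the inductive form of $h_{n}$ into $h_{n+1}=e^{2\pi ix}h_{n}-\gamma_{n+1}\mathbb{1}$, replace each $e^{2\pi ix}h_{k}$ by $h_{k+1}+\gamma_{k+1}\mathbb{1}$, and use $\gamma_{n+1}=c_{n+1}-\sum_{k=0}^{n-1}c_{n-k}\gamma_{k+1}$ (again from the inductive form of $h_{n}$); the constant terms cancel and the identity falls out. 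With this, $(S\mathbb{1}\cdot e^{2\pi inx},e^{2\pi inx})=(g_{0}e^{2\pi inx},e^{2\pi inx})$ is a genuine dual Kaczmarz pair: $\langle g_{0}e^{2\pi inx},e^{2\pi inx}\rangle_{\mu}=\int g_{0}\,d\mu=1$ by~(2), $\{e^{2\pi inx}\}$ is complete in $L^{2}(\mu)$ by Theorem~\ref{singular}, and $\{g_{0}e^{2\pi inx}\}$ is complete because $g_{0}\neq 0$ $\mu$-a.e.\ (completeness of the frame) and another F.\ and M.\ Riesz argument. A short induction using $\langle g_{0}e^{2\pi inx},e^{2\pi ikx}\rangle_{\mu}=c_{n-k}$ and the polynomial identity shows the auxiliary sequence of this pair equals $\{g_{n}\}$, and effectiveness of the pair is precisely the dextroduality hypothesis.

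The main obstacle is part~(2): the whole argument turns on recognizing that one should feed $f=S^{-1/2}U^{*}w$ into the dextroduality relation and then specialize to $w=\mathbb{1}_{v}$, which is what simultaneously yields $S\mathbb{1}=g_{0}$ and the normalization $\int g_{0}\,d\mu=1$ that part~(3) needs. Once those are in hand, parts~(1) and~(3) are essentially bookkeeping, with the only nontrivial ingredient being the polynomial identity isolated in~(3).
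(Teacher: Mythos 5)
Your proposal is correct, and parts (1) and (3) follow the paper's own argument almost exactly: the same telescoping of the dextrodual expansion of $T^{*}f$ for (1), and the same induction on the dual-Kaczmarz recursion for (3) (you factor $g_{n}=g_{0}h_{n}$ and run the induction on the quotient polynomials, which is equivalent to the paper's computation showing $Tg_{n}'=g_{n+1}'$; you are in fact slightly more careful than the paper in verifying completeness of $\{g_{0}e^{2\pi inx}\}$ and the normalization $\langle g_{0}e^{2\pi inx},e^{2\pi inx}\rangle_{\mu}=1$). The only genuine divergence is in part (2). The paper also identifies $\{g_{n}\}$ with the Kaczmarz auxiliary sequence $\{h_{n}\}$ of a singular measure via Corollary \ref{eqcor}, but then uses the orthogonality $\langle h_{0},h_{n}\rangle_{v}=\delta_{n0}$ to get $\langle S^{-1}g_{0}-\mathbb{1},e^{2\pi ix}g_{n-1}\rangle_{\mu}=0$ for all $n\geq 1$ and finishes by completeness of $\{e^{2\pi ix}g_{n-1}\}$; you instead pull the dextrodual expansion back through the unitary $U$ of Theorem \ref{genbackward} and use the Kaczmarz biorthogonality $\langle h_{n},\mathbb{1}_{v}\rangle_{v}=\delta_{n0}$ to collapse the series at $w=\mathbb{1}_{v}$. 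Both rest on the same structural identification; your version has the small advantage of delivering $\int g_{0}\,d\mu=1$ in the same stroke, while the paper's needs only the Gram-matrix identity of Corollary \ref{eqcor} rather than the full statement of Theorem \ref{genbackward}. Your route to the preliminary non-Riesz-basis claim (Bessel exponentials force square-summable Fourier coefficients of the singular measure $\mu$, hence absolute continuity) is also sound and is, if anything, more direct than the paper's appeal to non-uniqueness via Proposition \ref{exist}.
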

\begin{proof}
$(1)$

Similar to the calculation in Corollary \ref{Tform},
we have for $f \in L^{2}(\mu)$,
$$e^{2\pi i x}T^{*}f=\sum_{n=0}^{\infty}\langle f ,T^{n+1}g_{0}\rangle e^{2\pi i (n+1)x} =f-\langle f, g_{0}\rangle .$$
From here, a simple calculation shows $(1)$.

$(2)$

First we show $\{T^{n}g_{0}\}=\{g_{n}\}$ is not a Riesz basis. Suppose the contrary. By Lemma \ref{RBL}, we have $\{e^{2\pi i nx}\}_{n=0}^{\infty}$ is a Riesz basis in $L^{2}(\mu)$, but this is a contradiction since Proposition \ref{exist} shows Fourier expansions are not unique of this type.

By Corollary \ref{eqcor}, there is an auxiliary sequence $\{h_{n}\}$ of a singular Borel probability measure $v$ on $[0,1)$ such that
$$\langle S^{-1}g_{n},g_{k}\rangle_{\mu}=\langle h_{n},h_{k}\rangle_{v}$$ for all $n$ and $k$.

We have
$$\langle S^{-1}g_{0},Tg_{n-1}\rangle=\langle S^{-1}g_{0}, e^{2\pi i x}g_{n-1}\rangle-\langle \mathbb{1}, e^{2\pi i x}g_{n-1}\rangle \langle S^{-1}g_{0},g_{0}\rangle=0$$ for all $n\geq 1$ since $h_{0}$ is orthogonal to $h_{n}$ for all $n\geq 1$.
Furthermore,
$$\langle S^{-1}g_{0},g_{0}\rangle=1$$ since $h_{0}$ is a unit vector.
Now since $\{e^{2\pi i x}g_{n-1}\}_{n=1}^{\infty}$ is dense in $L^{2}(\mu)$, it must be that
$$S^{-1}g_{0}=\mathbb{1}.$$

$(3)$

Note that
$$\langle S(\mathbb{1}),\mathbb{1}\rangle=\langle S^{-\frac{1}{2}}S(\mathbb{1}),S^{-\frac{1}{2}}S(\mathbb{1})\rangle=\langle S^{-1}g_{0},g_{0}\rangle=1.$$

Let $\{g_{n}'\}$ be the auxiliary sequence of pair $(g_{0}e^{2\pi i nx},e^{2\pi i nx})$.
It is clear $g_{0}=g_{0}'$.

Suppose $g_{k}=g_{k}'$ for all $1\leq k\leq n$.

We have
$$g_{n}'=g_{0}e^{2\pi i nx}-\sum_{k=0}^{n-1}\langle g_{0}e^{2\pi i (n-k)x},\mathbb{1}\rangle g_{k}\implies$$
\begin{equation}
\begin{split}
Tg_{n}'&=g_{n+1}\\
&=T(g_{0}e^{2\pi i nx})-\sum_{k=0}^{n-1}\langle g_{0}e^{2\pi i (n-k)x},\mathbb{1}\rangle g_{k+1}\\
&=g_{0}e^{2\pi i (n+1)x}-\langle g_{0}e^{2\pi i (n+1)x}, \mathbb{1}\rangle g_{0}-\sum_{k=0}^{n-1}\langle g_{0}e^{2\pi i (n-k)x},\mathbb{1}\rangle g_{k+1}\\
&=g_{n+1}'.
\end{split}
\end{equation}

\end{proof}
Now we can establish a uniqueness result about these types of Fourier expansions:
\begin{cor}
Suppose that $\mu$ is a finite singular Borel measure on $[0,1)$.
\begin{enumerate}
    \item There is exactly one frame of the form $\{T^{n}(\frac{\mathbb{1}}{\mu([0,1))})\}_{n=0}^{\infty}$ for some $T\in B(L^{2}(\mu))$ that is dextrodual to $\{e^{2\pi i nx}\}_{n=0}^{\infty}$.
    \item There is exactly one tight frame of the form $\{T^{n}g_{0}\}_{n=0}^{\infty}$ where $T\in B(L^{2}(\mu))$, $T$ is not surjective, and $g_{0}\in L^{2}(\mu)$ that is dextrodual to $\{e^{2\pi i nx}\}_{n=0}^{\infty}$.
\end{enumerate}
\end{cor}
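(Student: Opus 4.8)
The plan is to derive both statements from Theorem~\ref{Mainsingular} together with Proposition~\ref{exist}; throughout write $c=\mu([0,1))$, which we may assume positive.

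For (1), existence is immediate: I would apply Proposition~\ref{exist} with $g_{0}=\mathbb{1}/c$, which is non-negative, satisfies $\langle g_{0},\mathbb{1}\rangle_{\mu}=1$, and has $g_{0},1/g_{0}$ constant and hence in $L^{\infty}(\mu)$; this produces a frame $\{T^{n}(\mathbb{1}/c)\}_{n=0}^{\infty}$ in $L^{2}(\mu)$ dextrodual to $\{e^{2\pi i nx}\}_{n=0}^{\infty}$. For uniqueness, I would take \emph{any} such frame and reuse the computation from the proof of Theorem~\ref{Mainsingular}(1): that computation uses only the dextrodual convergence $\sum_{n}\langle x,T^{n}g_{0}\rangle e^{2\pi i nx}=x$ and boundedness of multiplication by $e^{\pm 2\pi i x}$ --- not the non-surjectivity of $T$ --- so it applies here and forces $Tf=e^{2\pi i x}f-\langle e^{2\pi i x}f,\mathbb{1}\rangle_{\mu}\,(\mathbb{1}/c)$ for all $f$. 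Hence $T$, and with it the whole sequence, is uniquely determined. (In passing, this $T$ satisfies $T^{*}\mathbb{1}=0$, so it is automatically not surjective.)

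For (2), I would first verify that the frame $\{g_{n}\}=\{(\mathbb{1}/c)h_{n}\}$ constructed above --- where $\{h_{n}\}$ is the auxiliary sequence of $\{e^{2\pi i nx}\}$ in $L^{2}(\mu/c)$ --- is tight. Since $\mu/c$ is a singular Borel probability measure, $\{h_{n}\}$ is a Parseval frame in $L^{2}(\mu/c)$ by Theorem~\ref{singular}; using $\langle f,g_{n}\rangle_{\mu}=\langle f,h_{n}\rangle_{\mu/c}$ one gets $\sum_{n}|\langle f,g_{n}\rangle_{\mu}|^{2}=\|f\|_{\mu/c}^{2}=\tfrac1c\|f\|_{\mu}^{2}$, so $\{g_{n}\}$ is a tight frame with bound $1/c$, and its generating operator is not surjective by the previous paragraph. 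For uniqueness, let $\{R^{n}g_{0}'\}_{n=0}^{\infty}$ be any tight frame dextrodual to $\{e^{2\pi i nx}\}$ with $R$ not surjective, and let its frame operator be the scalar $S'f=A'f$ with $A'>0$. By Theorem~\ref{Mainsingular}(2)--(3), $g_{0}'=S'(\mathbb{1})=A'\mathbb{1}$ and $\langle g_{0}',\mathbb{1}\rangle_{\mu}=1$, so $A'\mu([0,1))=1$ and thus $g_{0}'=\mathbb{1}/c$. Then $\{R^{n}g_{0}'\}$ is a frame of the form $\{R^{n}(\mathbb{1}/c)\}$ dextrodual to $\{e^{2\pi i nx}\}$, and part (1) forces it to equal the frame constructed above.

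The main obstacle is a bookkeeping subtlety rather than a genuine difficulty: part (1) does not presuppose $T$ non-surjective, so Theorem~\ref{Mainsingular} cannot be quoted there as a black box, and one must instead point out that the explicit description of $T$ in its proof uses only dextroduality (together with the automatic consequence $T^{*}\mathbb{1}=0$, which gives non-surjectivity for free). After that, everything reduces to tracking the rescaling $\mu\leftrightarrow\mu/c$ and invoking the identity $g_{0}=S(\mathbb{1})$.
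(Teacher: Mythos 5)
Your proof is correct and follows essentially the same route as the paper: existence via Proposition \ref{exist}, the explicit formula for $T$ from Theorem \ref{Mainsingular}(1) for uniqueness in part (1), and statements (2)--(3) of Theorem \ref{Mainsingular} to force $g_{0}=\mathbb{1}/\mu([0,1))$ in the tight case. You are in fact slightly more careful than the paper in part (1), where you note that the derivation of $Tf=e^{2\pi i x}f-\langle e^{2\pi i x}f,\mathbb{1}\rangle_{\mu}g_{0}$ uses only dextroduality (so the non-surjectivity hypothesis of Theorem \ref{Mainsingular} is not needed there, and indeed follows automatically from $T^{*}\mathbb{1}=0$).
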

\begin{proof}
$(1)$

Existence of this frame follows from Proposition \ref{exist}, and uniqueness of this frame follows from statement $(1)$ of Theorem \ref{Mainsingular}.

$(2)$

The unique frame of the form $\{T^{n}(\frac{\mathbb{1}}{\mu([0,1))})\}_{n=0}^{\infty}$ where $T\in B(L^{2}(\mu))$ that is dextrodual to $\{e^{2\pi i nx}\}_{n=0}^{\infty}$ is $\{\frac{\mathbb{1}}{\mu([0,1))} h_{n}\}$ where $\{h_{n}\}$ is the auxiliary sequence of $\{e^{2\pi i nx}\}$ in $L^{2}(\frac{\mathbb{1}}{\mu([0,1))} \mu)$ by Proposition \ref{exist}. Since $\{h_{n}\}$ is a Parseval frame in $L^{2}(\frac{\mathbb{1}}{\mu([0,1))} \mu)$ by Theorem \ref{singular}, it is easy to show that $\{\frac{\mathbb{1}}{\mu([0,1))} h_{n}\}$ is a tight frame in $L^{2}(\mu)$.

Now if $\{T^{n}g_{0}\}_{n=0}^{\infty}$ is a tight frame where $T\in B(L^{2}(\mu))$ and $g_{0}\in L^{2}(\mu)$ that is dextrodual to $\{e^{2\pi i nx}\}_{n=0}^{\infty}$, the frame operator of $\{T^{n}g_{0}\}_{n=0}^{\infty}$ is a scale of the identity operator, so by statement $(2)$ and $(3)$ of Theorem \ref{Mainsingular}, $g_{0}$ is a constant function with $\mu$ average $1$. Then uniqueness of this frame follows from statement $(1)$ of Theorem \ref{Mainsingular}.
\end{proof}

Our last result of this section is about frames of the form $\{T^{n}g_{0}\}$ from Theorem \ref{Mainsingular}. Must they be of the form in Proposition \ref{exist} or do others exist? We were unable to show if expansions exist outside of the form in Proposition \ref{exist}, but the following result says that if the frame operator of $\{T^{n}g_{0}\}$ is nice, then the expansions are of the form in Proposition \ref{exist}.
\begin{thm}
Assuming the hypothesis of Theorem \ref{Mainsingular}, let $U$ be defined from Theorem \ref{genbackward}. Then the following are equivalent:
\begin{enumerate}
\item $S(\mathbb{1})$ is non-negative
\item $S$ is multiplication by $g_{0}$.
\item $$M_{g_{0}}(M_{e^{-2\pi i x}}(\mathbb{1}))=S(M_{e^{-2\pi i x}}(\mathbb{1})).$$
\item $$U^{*}(M_{e^{-2\pi i x}}(\mathbb{1}_{v}))=S^{\frac{1}{2}}(M_{e^{-2\pi i x}}(\mathbb{1}_{\mu})).$$
\end{enumerate}
Moreover, if statement $(1)$ is satisfied, for all $n$, $$g_{n}=S(\mathbb{1})h_{n}$$ where $\{h_{n}\}$ be auxiliary sequence of $\{e^{2\pi i nx}\}_{n=0}^{\infty}$ in $L^{2}(S(\mathbb{1}) \mu)$,
and $S(\mathbb{1}), \frac{1}{S(\mathbb{1})}\in L^{\infty}(\mu)$.
\end{thm}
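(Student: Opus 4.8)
The plan is to establish the cycle $(1)\Rightarrow(3)\Rightarrow(2)\Rightarrow(1)$ and, separately, $(2)\Leftrightarrow(4)$, harvesting the ``moreover'' statement as a byproduct. The whole argument hinges on one identity, which I would prove first: under $(1)$,
\[ g_n = S(\mathbb 1)\,h_n \qquad\text{for all } n, \]
where $\{h_n\}$ is the auxiliary sequence of $\{e^{2\pi i nx}\}_{n=0}^{\infty}$ in $L^2(S(\mathbb 1)\mu)$. Here I use that $S(\mathbb 1)=g_0$ and $\langle g_0,\mathbb 1\rangle_\mu=\langle S\mathbb 1,\mathbb 1\rangle=1$ (both from the proof of Theorem \ref{Mainsingular}), so that $S(\mathbb 1)\mu=g_0\mu$ is a singular Borel probability measure and Theorem \ref{singular} produces the Parseval frame $\{h_n\}$. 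One then checks by induction that $g_0h_n$ and $g_n$ satisfy the same recursion: for $\{g_n\}$ this is statement $(3)$ of Theorem \ref{Mainsingular} (that $\{g_n\}$ is the auxiliary sequence of $(g_0e^{2\pi i nx},e^{2\pi i nx})$ in $L^2(\mu)$), while for $g_0h_n$ one multiplies the $L^2(g_0\mu)$-recursion for $h_n$ by $g_0$ and uses $\langle e^{2\pi i(n-k)x},\mathbb 1\rangle_{g_0\mu}=\langle g_0e^{2\pi i(n-k)x},\mathbb 1\rangle_\mu$; since they agree at $n=0$, they coincide. This already gives the first half of the ``moreover.''

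Next, $(1)\Rightarrow(3)$: since $e^{-2\pi i x}\in L^2(g_0\mu)$, the Parseval property gives $e^{-2\pi i x}=\sum_n c_nh_n$ in $L^2(g_0\mu)$ with $c_n:=\langle e^{-2\pi i x},h_n\rangle_{g_0\mu}\in\ell^2$, and because $g_0\ge 0$ one computes $\langle e^{-2\pi i x},g_n\rangle_\mu=\langle e^{-2\pi i x},h_n\rangle_{g_0\mu}=c_n$. Hence $S(e^{-2\pi i x})=\sum_n c_ng_n=\sum_n c_ng_0h_n$ in $L^2(\mu)$; passing to a subsequence of partial sums that converges a.e.-$g_0\mu$ and a.e.-$\mu$ simultaneously, and noting the two notions agree on $\{g_0>0\}$ while both force the value $0$ on $\{g_0=0\}$, identifies the limit as $g_0e^{-2\pi i x}$. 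That is precisely $(3)$. For $(3)\Rightarrow(2)$ I would use the telescoping identity for orbit frames, $TST^*=S-\langle\,\cdot\,,g_0\rangle g_0$, and expand $TST^*$ via $Tf=e^{2\pi i x}f-\langle f,e^{-2\pi i x}\rangle_\mu g_0$ from Theorem \ref{Mainsingular}$(1)$; invoking $(3)$ (i.e.\ $Se^{-2\pi i x}=g_0e^{-2\pi i x}$), self-adjointness of $S$, and $\int g_0\,d\mu=1$, the three rank-one cross terms collapse to $-\langle\,\cdot\,,g_0\rangle g_0$, leaving $S=M_{e^{2\pi i x}}SM_{e^{-2\pi i x}}$. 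Thus $S$ commutes with $M_{e^{-2\pi i x}}$, hence with $M_{e^{2\pi i x}}$ and with every $M_g$, $g\in L^\infty(\mu)$; so $S=M_g$ with $g=S\mathbb 1=g_0$. This is $(2)$, and it forces $g_0,1/g_0\in L^\infty(\mu)$ since the multiplication operator $S$ is bounded and invertible, completing the ``moreover.'' Finally $(2)\Rightarrow(1)$ is immediate, as $S=M_{g_0}\ge 0$ gives $g_0\ge 0$.

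It remains to prove $(2)\Leftrightarrow(4)$. Comparing the formula for $T$ in Corollary \ref{Tform} with the one in Theorem \ref{Mainsingular}$(1)$ shows that $S^{1/2}U^*M_{e^{2\pi i x}}US^{-1/2}-M_{e^{2\pi i x}}$ is the rank-one map $f\mapsto\langle f,w\rangle g_0$ with $w:=S^{-1/2}U^*(e^{-2\pi i x})-e^{-2\pi i x}$; thus $(4)$ is exactly the statement $w=0$. If $(4)$ holds, then $U^*M_{e^{2\pi i x}}U=S^{-1/2}M_{e^{2\pi i x}}S^{1/2}$; the left side is unitary, and a similarity $S^{-1/2}AS^{1/2}$ of a unitary $A$ (with $S$ positive) is unitary only when $S$ commutes with $A$, so once more $S$ commutes with $M_{e^{2\pi i x}}$ and $S=M_{g_0}$. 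Conversely, if $(2)$ holds, then the engine identity gives $S^{-1/2}g_n=\sqrt{g_0}\,h_n$, so $\widetilde U:=U\circ M_{\sqrt{g_0}}\colon L^2(g_0\mu)\to L^2(v)$ is a unitary carrying the auxiliary sequence of $\{e^{2\pi i nx}\}$ in $L^2(g_0\mu)$ onto the one in $L^2(v)$; since the Gram matrix $\langle h_n,h_m\rangle$ of such an auxiliary sequence determines the measure, $v=g_0\mu$ and $\widetilde U=\mathrm{id}$, whence $U=M_{1/\sqrt{g_0}}$ and $U^*(e^{-2\pi i x})=\sqrt{g_0}\,e^{-2\pi i x}=S^{1/2}(e^{-2\pi i x})$, which is $(4)$.

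The step I expect to be the main obstacle is the engine identity $g_n=S(\mathbb 1)h_n$ and the deduction of $(3)$ from it: the inductive comparison of the two Kaczmarz recursions and the a.e.-subsequence argument have to be run entirely inside $L^2(\mu)$, because $L^2(\mu)$ and $L^2(g_0\mu)$ are not yet known to be comparable — that $g_0\in L^\infty(\mu)$ is only available once $(2)$ has been proved. A secondary technical point is pinning down $U$ (equivalently $v$) in the proof of $(2)\Rightarrow(4)$, for which the cleanest tool is the uniqueness of the measure with a prescribed auxiliary-sequence Gram matrix implicit in the correspondence behind Theorem \ref{singular}.
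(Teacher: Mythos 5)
Your proof is correct and runs on essentially the same machinery as the paper's: the inductive comparison of the two Kaczmarz recursions to obtain $g_{n}=S(\mathbb{1})h_{n}$, the identity $TST^{*}=S-\langle\,\cdot\,,g_{0}\rangle g_{0}$ combined with the explicit form of $T$ to force $S$ to commute with $M_{e^{2\pi i x}}$, and the Gram-matrix rigidity behind Corollary \ref{eqcor} to pin down $U$ and $v$ in the direction involving $(4)$. Two organizational differences are worth noting. First, you route $(1)\Rightarrow(2)$ through $(3)$, using an a.e.-subsequence argument for the single vector $e^{-2\pi i x}$ precisely so as not to need comparability of $L^{2}(\mu)$ and $L^{2}(g_{0}\mu)$; the paper instead obtains $S(\mathbb{1}),\,1/S(\mathbb{1})\in L^{\infty}(\mu)$ immediately after the engine identity by applying the frame inequality to indicator functions, via $\int_{B}S(\mathbb{1})\,d\mu=\sum_{n}|\langle \chi_{B},g_{n}\rangle_{\mu}|^{2}\cong\mu(B)$, after which $(1)\Rightarrow(2)$ is a one-line computation with the Parseval frame $\{h_{n}\}$. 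Your detour is valid (and your expansion of $TST^{*}$ for $(3)\Rightarrow(2)$ is a cleaner version of the paper's case split into $f\perp g_{0}$ and $f=g_{0}$), but the indicator-function trick is the shorter way past the obstacle you correctly flag. Second, for $(2)\Rightarrow(4)$ you explicitly identify $v=g_{0}\mu$ and $U=M_{1/\sqrt{g_{0}}}$ from the matching Gram matrices, whereas the paper asserts the same identification tersely (``it must be that $US^{-\frac{1}{2}}(S(h_{n}))=h_{n}$''); both versions lean on the same unproved rigidity of the auxiliary-sequence correspondence, so your appeal to ``the Gram matrix determines the measure'' is on the same footing as the paper's, and is not a gap relative to it.
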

\begin{proof}
Suppose $(1)$.
By Theorem \ref{singular},
$$h_{n}=L^n\mathbb{1}_{v}$$ for all $n$ for some $L\in B(L^{2}(S(\mathbb{1}) \mu))$ where for all $f\in L^{2}(S(\mathbb{1}) \mu)$,
$$Lf=e^{2\pi i x}f-\langle e^{2\pi i x}f,1\rangle_{S(\mathbb{1})\mu}.$$
One can show $S(\mathbb{1})L^{n}1=T^{n}g_{0}$ for all $n$ by induction using Theorem \ref{Mainsingular}.

Let $B\subseteq [0,1)$ be Borel. We have by the frame condition of $\{g_{n}\}$ and Theorem \ref{singular},
$$\int_{B}S(\mathbb{1})d\mu=\|\chi_{B}\|_{S(\mathbb{1})\mu}^{2}=\sum_{n=0}^{\infty}|\langle \chi_{B},h_{n}\rangle_{S(\mathbb{1})\mu}|^{2}=\sum_{n=0}^{\infty}|\langle \chi_{B},g_{n}\rangle_{\mu}|^{2}\cong \mu(B).$$
Since the frame bounds from $\{g_{n}\}$ are independent of $B$, $S(\mathbb{1})$ is bounded above and below.

$(1) \implies (2)$

For any $f\in L^{2}(\mu)$,
$$S(f)=\sum_{n=0}^{\infty}\langle f, h_{n}\rangle_{S(\mathbb{1})\mu} S(\mathbb{1})h_{n}=S(\mathbb{1})f=g_{0}f$$
since $L^{2}(S(\mathbb{1}) \mu)$ and $L^{2}(\mu)$ convergence are equivalent and the frame operator associated with $\{h_{n}\}$ in $L^{2}(S(\mathbb{1}) \mu)$ is the identity operator.

$(2)\implies (1)$

Since $S$ is positive definite, for any $B\subseteq [0,1)$ Borel, 
$$\langle S(\chi_{B}), \chi_{B}\rangle =\int_{B}g_{0}\geq 0,$$
then $g_{0}$ must be non-negative.

$(2)\implies (3)$

This is obvious.

$(3)\implies (2)$

For $f\in L^{2}(\mu)$ we have,

$$TST^{*}(f)=\sum_{n=0}^{\infty}\langle f, T^{n+1}g_{0}\rangle T^{n+1}g_{0}=S(f)-\langle f,g_{0}\rangle g_{0} .$$

Now making a substitution for the operator $T$ from Theorem \ref{Mainsingular}, we have

\begin{equation}\label{frameopcalc}
e^{2\pi i x}S(e^{-2\pi i x}(f-\langle f,g_{0}\rangle) )+\langle S(e^{-2\pi i x}(\langle f,g_{0}\rangle -f)), e^{-2\pi i x}\rangle g_{0}
\end{equation}
$$=S(f)-\langle f,g_{0}\rangle g_{0} .$$

Now suppose that $f\in \{g_{0}\}^{\perp}$. Then in equation (\ref{frameopcalc}) we have
$$e^{2\pi i x}S(e^{-2\pi i x}f)-\langle S(e^{-2\pi ix}f),e^{-2\pi i x}\rangle g_{0}=S(f).$$
But recalling statement $(3)$ and that $S$ is self-adjoint, we have from equation (\ref{frameopcalc}),

\begin{equation}
\begin{split}
e^{2\pi i x}S(e^{-2\pi i x}f)-\langle e^{-2\pi ix}f,S(e^{-2\pi i x})\rangle g_{0}&=e^{2\pi i x}S(e^{-2\pi i x}f)-\langle e^{-2\pi ix}f,e^{-2\pi i x}g_{0}\rangle g_{0}\\
&=e^{2\pi i x}S(e^{-2\pi i x}f)\\
&=S(f).
\end{split}
\end{equation}

Now suppose $f=g_{0}$ in equation (\ref{frameopcalc}), we have
$$e^{2\pi ix}S(e^{-2\pi i x}g_{0})-e^{2\pi ix}\|g_{0}\|^{2}S(e^{-2\pi i x})+\langle e^{-2\pi i x}(\|g_{0}\|^{2}-g_{0}),S(e^{-2\pi i x})\rangle g_{0} $$
$$=S(g_{0})-\|g_{0}\|^{2}g_{0}.$$
By statement $(3)$ and recalling that $\langle \mathbb{1},g_{0}\rangle=1$,

$$e^{2\pi ix}S(e^{-2\pi i x}g_{0})-\|g_{0}\|^{2}g_{0}=S(g_{0})-\|g_{0}\|^{2}g_{0}.$$
Therefore, $S$ commutes with multiplication by $e^{2\pi i x}$, and it must be that
$S$ is multiplication by $S(\mathbb{1})=g_{0}$.

$(4)\implies (2)$

By looking at the forms of $T$ from Corollary \ref{Tform} and Theorem \ref{Mainsingular} and since we have
$$S^{-\frac{1}{2}}U^{*}(M_{e^{-2\pi i x}}(\mathbb{1}_{v}))=M_{e^{-2\pi i x}}(\mathbb{1}_{\mu})$$ from assuming $(4)$, we have
\begin{equation}\label{4to2}
(S^{\frac{1}{2}}U^{*}M_{e^{2\pi i x}}US^{-\frac{1}{2}})f=e^{2\pi ix}f\end{equation} for all $f\in L^{2}(\mu)$. Now by Theorem \ref{kaczmarzclass}, $S^{\frac{1}{2}}$ commutes with $U^{*}M_{e^{2\pi i x}}U$, meaning it also commutes with multiplication by $e^{2\pi i x}$ by equation (\ref{4to2}). Then $S$ is multiplication operator.

$(1),(2)\implies (4)$

We have as a consequence of statement $(1)$,
$$\langle S^{-\frac{1}{2}}(S(\mathbb{1})h_{n}),S^{-\frac{1}{2}}(S(\mathbb{1})h_{k})\rangle_{\mu}=\langle S(\mathbb{1})h_{n},h_{k}\rangle_{\mu}=\langle h_{n},h_{k}\rangle_{S(\mathbb{1})\mu}$$ for all $n$ and $k$.
Then it must be that from Corollary \ref{eqcor},
$$US^{-\frac{1}{2}}(S(h_{n}))=h_{n}\implies U^{*}h_{n}=S^{\frac{1}{2}}h_{n}$$ for all $n$.
Furthermore, $$\sum_{n=0}^{k}\langle e^{-2\pi i x},h_{n}\rangle_{S(\mathbb{1})\mu} h_{n}\to e^{-2\pi ix}$$ in $L^{2}(S(\mathbb{1})\mu)$ and $L^{2}(\mu)$ since $S(\mathbb{1})$ is bounded above and below as a consequence of statement $(1)$. Therefore, $(4)$ follows.
\end{proof}

\section{Necessity result for two-sided operator orbit Fourier expansions}\label{S4}

In \cite{Berner2024Frame}, we discussed measures $\mu$ on $[0,1)$ that possess sequences $\{g_{n}\}_{n\in \mathbb{Z}}$ that are Poisson dextrodual to $\{e^{2\pi i nx}\}_{n\in \mathbb{Z}}$. In this section, we are interested in the case when $\{g_{n}\}$ arises from an two-sided operator orbit. 
It is known that $\{e^{2\pi i nx}\}_{n\in \mathbb{Z}}$ forms a frame in $L^{2}(\mu)$ exactly when $\mu$ is absolutely continuous with Radon-Nikodym derivative bounded above and below on its support. This follows from work of He, Lai, and Lau in \cite{Lai2011Fourier}, \cite{He2013Exponential}. In this case, the frame operator associated with $\{e^{2\pi i nx}\}_{n\in \mathbb{Z}}$ is multiplication by the Radon-Nikodym derivative of $\mu$. Therefore, there is a frame of the form $\{T^{n}g_{0}\}_{n\in \mathbb{Z}}$ for a bounded invertible operator $T$ on $L^{2}(\mu)$ and $g_{0}\in L^{2}(\mu)$ that is Poisson dextrodual to $\{e^{2\pi i nx}\}_{n\in \mathbb{Z}}$. However in the following section, we show that $\{e^{2\pi i nx}\}_{n\in \mathbb{Z}}$ forming a frame in $L^{2}(\mu)$ is not necessary for $\mu$ possessing a sequence of the form $\{T^{n}g_{0}\}_{n\in \mathbb{Z}}$ that is Poisson dextrodual to $\{e^{2\pi i nx}\}_{n\in \mathbb{Z}}$.

Our first result is proving necessary conditions for measures that possess a sequence of the form $\{T^{n}g_{0}\}_{n\in \mathbb{Z}}$ that is Poisson dextrodual to $\{e^{2\pi inx}\}_{n\in \mathbb{Z}}$. The first condition we prove is a similar argument to the argument that Christensen and Hasannasab provided in Lemma 3.4 of \cite{Christensen2017Operator}, except our limit is taken over symmetric sums. Therefore, we added a reasonable coefficients decay condition to the following theorem. Furthermore, the fourth condition in the following theorem is inspired from the argument of Hunt, Muckenhoupt, and Wheeden in Theorem 8 of \cite{Hunt1973Weighted}.

\begin{thm}\label{Mthm}
Let $\mu$ be a finite Borel measure on $[0,1)$ such that there exists an invertible $T\in B(L^{2}(\mu))$ and $g_{0}\in L^{2}(\mu)$ where  $$\lim_{|M|\to \infty} \langle f,T^{M}g_{0}\rangle_{\mu} =0$$ for all $f\in L^{2}(\mu)$, and
$\{T^{n}g_{0}\}_{n\in \mathbb{Z}}$ is Poisson dextrodual to $\{e^{2\pi i nx}\}_{n\in \mathbb{Z}}$.
Then 
\begin{enumerate}
\item $T=M_{e^{2\pi ix}}$.
\item $\mu$ is absolutely continuous with Radon-Nikodym derivative $g$.
\item $g_{0}=\frac{1}{g}$ $\mu$ almost everywhere.
\item There exists $C>0$ such that for any interval $I\subseteq [0,1)$,
\begin{equation}\label{Meq}
\frac{1}{|I|}\int_{I}gdx \frac{1}{|I|}\int_{I}\frac{1}{g}\chi_{\{x: g(x)>0\}}dx \leq C.
\end{equation}
\end{enumerate}
\end{thm}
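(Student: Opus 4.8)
The plan is to exploit the symmetric-sum reconstruction formula $\sum_{n=-k}^{k}\langle f,T^{n}g_{0}\rangle e^{2\pi i nx}\to f$ together with the decay hypothesis on $\langle f,T^{M}g_{0}\rangle$ to pin down $T$ first, and then read off the structure of $\mu$ and $g_{0}$ from there. For step (1), I would mimic the Christensen--Hasannasab argument (Lemma 3.4 of \cite{Christensen2017Operator}): apply the reconstruction identity to $f = e^{2\pi i x} g_{m}$ where $g_{m} = T^{m}g_{0}$, and also to $f = g_{m+1}$; comparing the two symmetric partial sums and using that $\langle e^{2\pi i x} g_{m}, g_{n}\rangle$ and $\langle g_{m+1}, g_{n}\rangle$ generate the same Fourier data, conclude $e^{2\pi i x} g_{m} = g_{m+1} = T g_{m}$ for every $m\in\mathbb Z$. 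Since $\{g_{m}\}_{m\in\mathbb Z}$ is complete (it is Poisson dextrodual, hence total), this forces $T = M_{e^{2\pi i x}}$. The decay hypothesis is what makes the symmetric-limit manipulation legitimate — without it, the boundary terms of the symmetric sums would not vanish, which is precisely why the authors flagged the "reasonable coefficients decay condition."

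For steps (2) and (3): once $T = M_{e^{2\pi i x}}$, we have $g_{n} = e^{2\pi i n x} g_{0}$, so the reconstruction formula reads $\sum_{n=-k}^{k}\langle f, e^{2\pi i n x} g_{0}\rangle_{\mu}\, e^{2\pi i n x}\to f$ in $L^{2}(\mu)$, i.e. $\sum_{n=-k}^{k}\widehat{(f\overline{g_{0}}\,\mu)}(-n)\, e^{2\pi i n x}\to f$. Testing this against $f = \mathbb 1$ gives $\sum_{n}\widehat{(\overline{g_0}\mu)}(-n) e^{2\pi i nx}\to \mathbb 1$ in $L^2(\mu)$; more usefully, testing against general $f$ and recognizing the symmetric partial sums as Fourier partial sums, the limit identity says the measure $f\overline{g_0}\,d\mu$ has the same action as the function $f\,dx$ against exponentials, so I would argue $\overline{g_0}\,d\mu = dx$ as measures (first showing $\mu\ll dx$ by noting its Fourier data against a dense set matches that of an absolutely continuous measure, then identifying the density). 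Writing $d\mu = g\,dx$ this yields $g_0 = 1/g$ a.e. (and in particular $g_0$ is real and positive where $g>0$); one must also check $g_0$ vanishes where $g$ vanishes, which follows since $g_0\in L^2(\mu)$ and the complement contributes nothing to $\mu$.

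For step (4), the $A_2$-type condition: with $d\mu = g\,dx$ and $g_0 = 1/g$, the Poisson-dextrodual property is the statement that the symmetric (square) partial sums of the Fourier series of $f$, computed with the "analysis" weight $g$ and "synthesis" weight $1$, converge in $L^2(g\,dx)$ for every $f\in L^2(g\,dx)$. By the uniform boundedness principle this forces a uniform bound on the partial-sum operators $f\mapsto S_k f$ on $L^2(g\,dx)$, where $S_k$ is the $k$-th Dirichlet partial sum. Uniform $L^2(w)$-boundedness of the Fourier (equivalently conjugate-function / Hilbert-transform) partial sums is, by the Hunt--Muckenhoupt--Wheeden theorem (Theorem 8 of \cite{Hunt1973Weighted}), equivalent to $w\in A_2$ on the circle — but here we only get the $A_2$ \emph{inequality} on intervals as a \emph{necessary} consequence, which is exactly \eqref{Meq} (with the truncation $\chi_{\{g>0\}}$ handling the degenerate set). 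Concretely I would extract \eqref{Meq} directly: feed $f = \chi_I/\!\!\int_I g$ (normalized) into the reconstruction/boundedness estimate and use the standard computation that $\|S_k(\chi_I \cdot)\|^2_{L^2(w)}\gtrsim (\text{avg}_I w)(\text{avg}_I w^{-1})$ up to constants, via testing against $\chi_I$ again and estimating the Dirichlet kernel from below on $I$.

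The main obstacle I anticipate is step (1): making the symmetric-sum comparison rigorous. Unlike the one-sided case, the partial sums $\sum_{-k}^{k}$ do not telescope cleanly under multiplication by $e^{2\pi i x}$ (the index shift turns a symmetric window into an off-center one), so the decay hypothesis $\langle f, T^{M}g_0\rangle\to 0$ as $|M|\to\infty$ must be invoked carefully to control the two stray endpoint terms; getting $e^{2\pi ix}g_m = Tg_m$ for \emph{all} $m$ simultaneously, rather than just in some averaged sense, is the delicate point. A secondary subtlety is justifying the measure-identification $\overline{g_0}\,d\mu = dx$ in step (2)–(3) purely from symmetric $L^2(\mu)$-convergence rather than from a frame inequality, since we are explicitly not assuming $\{e^{2\pi inx}\}$ is a frame.
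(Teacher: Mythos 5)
Your steps (1) and (4) are in the spirit of the paper's proof: (1) is the adjoint/index-shift computation in which the hypothesis $\lim_{|M|\to\infty}\langle f,T^{M}g_{0}\rangle=0$ kills the two stray endpoint terms of the re-centred symmetric sum, yielding $e^{2\pi i x}T^{*}f=f$ for every $f$ (the paper does this directly for $T^{*}f$ rather than by the vector-level comparison you sketch, which as written is circular: asserting that $\langle e^{2\pi ix}g_{m},g_{n}\rangle$ and $\langle g_{m+1},g_{n}\rangle$ ``generate the same Fourier data'' is essentially the conclusion); (4) is the Hunt--Muckenhoupt--Wheeden necessity argument via uniform boundedness of the partial-sum operators and the Dirichlet-kernel lower bound $D_{N}(t)\geq N$ for $|t|\leq\frac{1}{8N}$. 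One concrete correction in (4): the test function must be $g^{-1}\chi_{I\cap\{g>0\}}$, not a normalization of $\chi_{I}$. With $f=\chi_{I}$ the same computation only gives $N^{2}|I\cap\{g>0\}|^{2}\int_{I}g\leq B\int_{I}g$, i.e.\ $|I\cap\{g>0\}|\lesssim|I|$, which is vacuous; with $f=g^{-1}\chi_{I\cap\{g>0\}}$ one gets $N^{2}\bigl(\int_{I}g^{-1}\chi_{\{g>0\}}\bigr)^{2}\int_{I}g\leq B\int_{I}g^{-1}\chi_{\{g>0\}}$, which is exactly \eqref{Meq}.

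The genuine gap is in (2)--(3). You assert the measure identification $\overline{g_{0}}\,d\mu=dx$. First, that identity is false in general: the correct conclusion is $\overline{g_{0}}g=1$ only $\mu$-a.e., equivalently $\overline{g_{0}}\,d\mu=\chi_{\{g>0\}}\,dx$, since $g$ may vanish on a set of positive Lebesgue measure (this is precisely why $\chi_{\{x:g(x)>0\}}$ appears in \eqref{Meq}). Second, and more seriously, you cannot ``read off the Fourier data'' from $L^{2}(\mu)$-convergence of $\sum_{n}c_{n}e^{2\pi inx}$: the exponentials are not assumed minimal or a frame in $L^{2}(\mu)$, so the coefficients of an $L^{2}(\mu)$-convergent trigonometric series are not unique, and convergence to $f$ in $L^{2}(\mu)$ does not identify $c_{n}$ with a Lebesgue--Fourier coefficient of anything. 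The paper needs two separate nontrivial devices here. For absolute continuity it tests the reconstruction with $\chi_{S}$, where $S$ is a Lebesgue-null carrier of the singular part $\mu_{s}$, pairs the limit with $\overline{g_{0}}\chi_{S}$ to obtain $\sum_{n}|\widehat{\overline{g_{0}}\mu_{s}}(n)|^{2}<\infty$, and concludes $\overline{g_{0}}\mu_{s}=0$ because a singular complex measure sharing its Fourier coefficients with an absolutely continuous one must vanish. For $g_{0}=1/g$ it first tests with $f=\mathbb{1}$ to get $\overline{g_{0}}g\in L^{2}([0,1))$, then invokes Carleson's theorem so that the symmetric partial sums converge pointwise Lebesgue-a.e.\ to $\overline{g_{0}}g$, while a subsequence of the same sums converges $\mu$-a.e.\ to $\mathbb{1}$; comparing the two limits gives $\overline{g_{0}}g=1$ $\mu$-a.e. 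You explicitly flag this identification as an unresolved ``secondary subtlety,'' but it is the crux of (2)--(3); without the $\chi_{S}$ test and the Carleson argument (or substitutes for them) the proof does not close.
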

However, to prove this theorem, we need a lemma about the Dirichlet kernel where for each $M\in \mathbb{N}$, $$D_{M}(t)=\frac{sin(\pi(2M+1)t)}{sin(\pi t)}.$$ 

\begin{lem}\label{DirchletL}
For $N\geq 1$,
$D_{N}(t)\geq N$ whenever $|t|\leq \frac{1}{8N}$.
\end{lem}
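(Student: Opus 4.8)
The plan is to estimate the Dirichlet kernel $D_N(t) = \frac{\sin(\pi(2N+1)t)}{\sin(\pi t)}$ from below on the small interval $|t| \le \frac{1}{8N}$ by controlling the numerator from below and the denominator from above. First I would observe that $D_N$ is even, so it suffices to treat $0 \le t \le \frac{1}{8N}$, and note the endpoint value $D_N(0) = 2N+1 > N$ by L'H\^opital (or by the sum formula $D_N(t) = \sum_{k=-N}^{N} e^{2\pi i k t}$). For the numerator: on the range $0 \le t \le \frac{1}{8N}$ we have $0 \le \pi(2N+1)t \le \frac{\pi(2N+1)}{8N} \le \frac{3\pi}{8} < \frac{\pi}{2}$ (using $2N+1 \le 3N$ for $N \ge 1$), so $\sin(\pi(2N+1)t) \ge 0$, and more precisely since $\sin$ is concave on $[0, \pi]$ we get $\sin(\theta) \ge \frac{\sin(3\pi/8)}{3\pi/8}\,\theta$ on $[0, 3\pi/8]$; a cleaner route is $\sin(\theta) \ge \frac{2}{\pi}\theta$ on $[0, \pi/2]$, giving $\sin(\pi(2N+1)t) \ge \frac{2}{\pi}\pi(2N+1)t = 2(2N+1)t$.

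For the denominator, on $0 \le t \le \frac{1}{8N} \le \frac18 < \frac12$ we use $\sin(\pi t) \le \pi t$, so $\frac{1}{\sin(\pi t)} \ge \frac{1}{\pi t}$. Combining, for $0 < t \le \frac{1}{8N}$,
\[
D_N(t) \ge \frac{2(2N+1)t}{\pi t} = \frac{2(2N+1)}{\pi} > \frac{4N}{\pi} > N,
\]
and the $t=0$ case was handled separately (or absorbed by continuity). This closes the proof. The only mild subtlety — and the place to be a little careful rather than a genuine obstacle — is making sure the numerator stays nonnegative on the whole range, which is exactly why the hypothesis is $|t| \le \frac{1}{8N}$ rather than something larger: the argument $\pi(2N+1)t$ must be kept below $\pi$ (indeed below $\pi/2$ if one wants the simple linear bound $\sin\theta \ge \frac2\pi\theta$), and $\frac{1}{8N}$ comfortably achieves this since $\frac{\pi(2N+1)}{8N} \le \frac{3\pi}{8}$. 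If one prefers to avoid the $\sin\theta \ge \frac2\pi\theta$ inequality outside $[0,\pi/2]$, restricting to $|t| \le \frac{1}{8N}$ keeps us inside $[0, 3\pi/8] \subset [0,\pi/2]$, so the linear lower bound applies directly and no case analysis on $[\pi/2,\pi]$ is needed.

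Overall this is a short elementary estimate; I would present it as: reduce to $t \ge 0$ by evenness, bound $\sin(\pi(2N+1)t) \ge \frac{2}{\pi}\pi(2N+1)t$ via $\sin\theta \ge \frac2\pi\theta$ on $[0,\pi/2]$ (valid since $\pi(2N+1)t \le \frac{3\pi}{8}$), bound $\sin(\pi t) \le \pi t$, divide, and simplify to get $D_N(t) \ge \frac{2(2N+1)}{\pi} \ge N$ for all $N \ge 1$. No deeper machinery or results from earlier in the paper are required.
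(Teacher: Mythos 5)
Your proof is correct: the bounds $\sin\theta \ge \tfrac{2}{\pi}\theta$ on $[0,\pi/2]$ (applicable since $\pi(2N+1)t \le \tfrac{3\pi}{8}$ for $N\ge 1$) and $\sin(\pi t)\le \pi t$ combine to give $D_N(t)\ge \tfrac{2(2N+1)}{\pi} > N$, and the reduction to $t\ge 0$ by evenness is valid. The paper omits the proof, remarking only that it is ``a calculus argument,'' and yours is precisely such an argument, so there is nothing further to compare.
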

This result can be proved using a calculus argument.
\begin{proof}[Proof of Theorem \ref{Mthm}]

$(1)$

Consider the following for any $f\in L^{2}(\mu)$,
\begin{equation}
\begin{split}
e^{2\pi i x}T^{*}f&=\lim\sum_{n=-M}^{M}\langle f, T^{n+1}g_{0}\rangle e^{2\pi i (n+1)x}\\
&=f-\lim_{M\to \infty} [\langle f,T^{(-M)}g_{0}\rangle e^{-2\pi i Mx}]+\lim_{M\to \infty}[\langle f,T^{(M+1)}g_{0}\rangle e^{2\pi i (M+1)x}]\\
&=f.
\end{split}
\end{equation}
Therefore, $T=M_{e^{2\pi ix}}$.

$(2)$

Let $f=\chi_{S}$ where $S$ is a Lebesgue measure zero set such that $\mu_{s}(S^{c})=0$ where $\mu_{s}$ is the singular part of $\mu$.
We have by assumption and from statement $(1)$ of Theorem \ref{Mthm},

\begin{equation}\label{abscp}
\chi_{S}=\lim\sum_{n=-M}^{M}\langle \chi_{S}, g_{0}e^{2\pi i nx}\rangle_{\mu}e^{2\pi i nx}=\lim\sum_{n=-M}^{M}\langle \overline{g_{0}}, e^{2\pi i nx}\rangle_{\mu_{s}}e^{2\pi i nx}\implies 
\end{equation}
$$
\langle \lim\sum_{n=-M}^{M}\langle \overline{g_{0}}, e^{2\pi i nx}\rangle_{\mu_{s}}e^{2\pi i nx}, \overline{g_{0}} \chi_{S}\rangle_{\mu}< \infty.
$$

That is,
$$\sum_{n=-\infty}^{\infty}|\langle \overline{g_{0}}, e^{2\pi inx}\rangle_{\mu_{s}}|^{2}<\infty.$$ Therefore, the complex measure $\overline{g_{0}}\mu_{s}$ has the same Fourier coefficients as an absolutely continuous complex measure with Radon-Nikodym derivative in $L^2([0,1)).$ However, since the complex measure  $\overline{g_{0}}\mu_{s}$ is also singular, it must be the case that 
$g_{0}$ is zero $\mu_{s}$ almost everywhere. Then by equation (\ref{abscp}), $\chi_{S}=0$ in $L^{2}(\mu)$, showing $\mu$ is absolutely continuous.

$(3)$

Let $g$ be the Radon-Nikodym derivative of $\mu$.
We have by the Poisson dextrodual property and statement $(1)$,
$$\langle 1, \overline{g_{0}}\rangle_{\mu} =\lim\sum_{n=-M}^{M}\langle 1, g_{0}e^{2\pi i nx}\rangle_{\mu} \langle g_{0}e^{2\pi i nx},1\rangle_{\mu} <\infty$$
$$\implies 
\sum_{n\in \mathbb{Z}}|\langle \overline{g_{0}}g,e^{2\pi inx}\rangle_{L^2([0,1))}|^{2}=\int_{0}^{1}|g_{0}|^{2}g^{2}dx<\infty.$$

Then we have $$\sum_{n=-M}^{M}\langle 1, g_{0}e^{2\pi inx}\rangle_{\mu} e^{2\pi inx}=\sum_{n=-M}^{M}\langle \overline{g_{0}}g, e^{2\pi inx}\rangle_{L^2([0,1))} e^{2\pi inx} \to \overline{g_{0}}g$$ point-wise almost everywhere by Carleson's theorem. However, there exists a sub-sequence of $\sum_{n=-M}^{M}\langle 1, g_{0}e^{2\pi inx}\rangle_{\mu} e^{2\pi inx}$ converging to $1$ point-wise $\mu$ almost everywhere.
Therefore, since $\mu$ is absolutely continuous, $$\overline{g_{0}}g=1$$ $\mu$ almost everywhere.

$(4)$

It suffices to prove the result for intervals of length at most $\frac{1}{8}$.
Now for each $M$, define the bounded operators $R_{M}: L^{2}(\mu)\to L^{2}(\mu)$ where
\begin{equation}\label{Fourierform}
R_{M}(f)=\sum_{n=-M}^{M}\langle f, T^{n}g_{0}\rangle_{\mu} e^{2\pi inx}=\sum_{n=-M}^{M}\langle f, \chi_{\{x: g(x)>0\}}e^{2\pi i nx}\rangle_{L^2([0,1))}e^{2\pi i nx}.
\end{equation} 

Note the equality in equation (\ref{Fourierform}) follows from statement (3). We also know by the Poisson dextrodual condition that for each $f\in L^{2}(\mu)$, $R_{M}(f)$ is a bounded sequence, making the $R_{M}$ uniformly bounded by the Principle of Uniform Boundedness with some bound $B$. 

Let $I$ be an interval in $[0,1)$ with $|I|\leq \frac{1}{8}$ and take $N=\lfloor \frac{1}{8|I|} \rfloor$ so that $|I|\leq \frac{1}{8N}$.
We have for $\theta \in I$ by Lemma \ref{DirchletL},
$$|R_{N}(\frac{1}{g}\chi_{\{x\in I: g(x)>0\}})(\theta)|=|\int_{I}\frac{1}{g(x)}\chi_{\{x: g(x)>0\}}D_{N}(\theta-x)dx|$$
$$\geq N \int_{I}\frac{1}{g(x)}\chi_{\{x: g(x)>0\}}dx.$$ Now using the fact that the $R_{M}$ operators are uniformly bounded by $B$ we get,
$$N^{2}\left [\int_{I}\frac{1}{g(x)}\chi_{\{x: g(x)>0\}}dx\right ]^{2}\int_{I}g(x)dx\leq B \int_{I}\frac{1}{g(x)}\chi_{\{x: g(x)>0\}}dx.$$
Finally, some simple algebra finishes the proof.

\end{proof}

\begin{conj}
If $\mu$ is a finite absolutely continuous Borel measure on $[0,1)$ whose Radon-Nikodym derivative $g$ satisfies equation $(\ref{Meq})$, then $\mu$ possesses a sequence of the form $\{T^{n}g_{0}\}$ that is Poisson dextrodual to $\{e^{2\pi i nx}\}_{n\in \mathbb{Z}}$.
\end{conj}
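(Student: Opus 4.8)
The plan is to run the necessity argument of Theorem~\ref{Mthm} in reverse and \emph{construct} the orbit by hand. Set $E:=\{x\in[0,1):g(x)>0\}$, take $T:=M_{e^{2\pi i x}}$ (invertible on $L^2(\mu)$, with $T^{-1}=M_{e^{-2\pi i x}}$), and put $g_0:=\tfrac1g\chi_E$; by Theorem~\ref{Mthm} this is the only candidate that could work (and, by a near-orthogonality argument using $\widehat g(-2k)\to 0$, the decay hypothesis of Theorem~\ref{Mthm} is automatic for any Poisson-dextrodual orbit over an absolutely continuous measure, so nothing is lost in studying only this candidate). First I would record the bookkeeping: applying (\ref{Meq}) to $I=[0,1)$ and using $0<\mu([0,1))<\infty$ gives $\int_E\tfrac1g\,dx<\infty$, so $\|g_0\|_\mu^2=\int_E\tfrac1g\,dx<\infty$ and, by Cauchy--Schwarz, $f\chi_E\in L^1([0,1))$ for every $f\in L^2(\mu)$. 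A direct computation then gives $\langle f,T^ng_0\rangle_\mu=\int_E f(x)e^{-2\pi i n x}\,dx=\widehat{f\chi_E}(n)$, so Poisson dextroduality of $\{T^n g_0\}_{n\in\mathbb Z}$ is \emph{equivalent} to the weighted Fourier-convergence statement
$$ S_M(f\chi_E)\ \longrightarrow\ f\quad\text{in } L^2(g\,dx)\qquad\text{for every } f\in L^2(g\,dx), $$
where $S_M h:=D_M * h$ is the $M$-th symmetric partial-sum operator (and $f\chi_E$ is the representative of $f$ vanishing off $E$).

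For the displayed convergence I would use the usual two-step scheme: (i) a uniform bound $\sup_M\big\|S_M(\,\cdot\,\chi_E)\big\|_{L^2(g\,dx)\to L^2(g\,dx)}<\infty$, and (ii) convergence on a dense subspace. The span of $\{e^{2\pi i n x}\}_{n\in\mathbb Z}$ is dense in $L^2(\mu)$, since $h\perp e^{2\pi i n x}$ for all $n$ forces $\widehat{hg}\equiv 0$, hence $hg=0$, hence $h=0$ in $L^2(\mu)$. In two regimes both steps go through painlessly. If $g$ and $1/g$ are both essentially bounded on $E$ — equivalently, $\{e^{2\pi i n x}\}_{n\in\mathbb Z}$ is itself a frame for $L^2(\mu)$ — then $f\chi_E\in L^2([0,1),dx)$ for every $f\in L^2(\mu)$, so $S_M(f\chi_E)\to f\chi_E$ in $L^2([0,1),dx)$ by the elementary $L^2$ theory of Fourier series, and boundedness of $g$ upgrades this to convergence in $L^2(\mu)$; no deep harmonic analysis is needed. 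If instead $g>0$ Lebesgue-a.e., then $\chi_E=1$ a.e., (\ref{Meq}) is exactly the Muckenhoupt $A_2$ condition, the Hunt--Muckenhoupt--Wheeden theorem supplies the uniform bound in (i), and (ii) is immediate because $q\chi_E=q$ a.e.\ and $S_M q=q$ for every trigonometric polynomial $q$ of degree $\le M$. Together with the sufficiency class already isolated in Section~\ref{S5} (Corollary~\ref{suffcase}), this would settle the conjecture in a broad range of cases.

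The outstanding case — and the reason the statement is only a conjecture — is when $g$ vanishes on a set of positive Lebesgue measure and is not bounded below on $E$: then (\ref{Meq}) is a genuinely \emph{weak} $A_2$ condition, the classical weighted theory does not apply (an $A_2$ weight is positive a.e.), and the natural regularizations fail — for instance $g+\varepsilon$ is not uniformly in $A_2$ as $\varepsilon\to 0^+$ unless $g$ is bounded. What seems to be needed is a degenerate-weight analogue of the weighted Carleson maximal estimate $\big\|\sup_M |S_M h|\big\|_{L^2(g\,dx)}\lesssim\|h\|_{L^2(g\,dx)}$ with constant controlled by the constant in (\ref{Meq}), presumably established by a Calder\'on--Zygmund/good-$\lambda$ analysis run relative to a Whitney-type decomposition of the ``holes'' $\{g=0\}$. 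A second obstacle is tangled with the first: in this regime the density step is subtler, because for a trigonometric polynomial $q$ one only has $S_M(q\chi_E)=q-S_M(q\chi_{E^c})$, so one must separately prove the off-support estimate $S_M(q\chi_{E^c})\to 0$ in $L^2(g\,dx)$, again an inequality that can only be controlled through (\ref{Meq}). I expect the degenerate weighted Carleson bound to be the crux: once it is available it should yield both the uniform bound in (i) and the off-support estimate, after which dominated convergence against $h\in L^2(g\,dx)$ closes the argument.
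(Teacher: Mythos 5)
First, a point of order: this statement is labeled a \emph{conjecture} in the paper, and the authors state explicitly that they were unable to prove it --- Section \ref{S5} only establishes a partial converse (Corollary \ref{suffcase}) under the strictly stronger hypothesis that $g^{1+\epsilon}$ satisfies (\ref{Meq}) for some $\epsilon>0$, via the Kurki--Mudarra $A_2$-extension theorem (Theorem \ref{extthm}). So there is no proof in the paper to compare against, and your proposal, candidly, does not supply one either: it resolves two special regimes and then openly defers the general case. The parts you do carry out are sound. The reduction of Poisson dextroduality of $\{M_{e^{2\pi i x}}^{\,n}(\tfrac{1}{g}\chi_E)\}_{n\in\mathbb{Z}}$ to the weighted convergence $S_M(f\chi_E)\to f$ in $L^2(g\,dx)$ is exactly the reformulation the paper makes at the start of Section \ref{S5}; the finiteness bookkeeping from (\ref{Meq}) applied to $I=[0,1)$ is correct; the case $g,1/g\in L^\infty(E)$ is elementary as you say; and the case $g>0$ a.e.\ is genuinely settled by Theorem \ref{Hunthm}, since there (\ref{Meq}) \emph{is} the $A_2$ condition. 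Notably, your $g>0$ a.e.\ case and the paper's Corollary \ref{suffcase} are complementary partial results: yours does not require the $\epsilon$-bumped condition but cannot tolerate holes in the support, while theirs tolerates holes but needs the bump to invoke the extension theorem.

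The gap is the one you name yourself: when $|\{g=0\}|>0$ and $1/g\notin L^\infty(E)$, condition (\ref{Meq}) is a degenerate, support-restricted $A_2$ condition to which neither Theorem \ref{Hunthm} nor (without the bump) Theorem \ref{extthm} applies, and your proposed remedy --- a weighted Carleson-type maximal inequality for such degenerate weights, plus the off-support estimate $S_M(q\chi_{E^c})\to 0$ in $L^2(g\,dx)$ --- is a program, not an argument. Neither estimate is established or reduced to anything known, and the second is not obviously a consequence of (\ref{Meq}) at all (it asks the partial sums of a function supported on $\{g=0\}$ to be small precisely where $g$ lives, which is a cancellation statement of a different character from an $A_2$ bound). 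One smaller caveat: your parenthetical claim that the decay hypothesis of Theorem \ref{Mthm} is ``automatic'' for any Poisson-dextrodual orbit over an absolutely continuous measure, so that $T=M_{e^{2\pi i x}}$ and $g_0=\tfrac{1}{g}\chi_E$ is the \emph{only} candidate, is asserted rather than proved; it is harmless here because the construction does not need uniqueness of the candidate, but it should not be leaned on. In short: correct reduction, two correct special cases, and an honest but unfilled hole exactly where the paper also stops.
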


Since we were unable to prove this conjecture, the next section of this paper will be devoted to proving a partial converse.

\section{A sufficiency result}\label{S5}

For the converse result, we use Theorem \ref{Mthm}. Asking if an absolutely continuous measure $\mu$ with Radon-Nikodym derivative $g$ that satisfies equation $(\ref{Meq})$ possesses a sequence of the form $\{T^{n}g_{0}\}$ that is Poisson dextrodual to $\{e^{2\pi i nx}\}_{n\in \mathbb{Z}}$ is the same as asking if the sequence $$S_{n}(f\chi_{\{x: g(x)>0\}})\to f$$ in $L^{2}(\mu)$ for all $f\in L^{2}(\mu)$ where $S_{n}(f\chi_{\{x: g(x)>0\}})$ denotes the classical Fourier sequence of $f\chi_{\{x: g(x)>0\}}$. This problem is similar to the problem of weighted norm convergence of Fourier series that is discussed by Hunt, Muckenhoupt, and Wheeden \cite{Hunt1973Weighted}. They showed that weighted norm convergence of Fourier series happens only if the weight satisfies the $A_{2}$ condition, which is similar to satisfying equation (\ref{Meq}).
We state the $A_{2}$ condition here as well as the result in \cite{Hunt1973Weighted}.
\begin{defn}
A non-negative measurable function $w$ defined on $\mathbb{R}$ satisfies the $A_{2}$ condition if there is a $C>0$ such that
$$\left(\frac{1}{|I|}\int_{I}w(x)dx\right)\left(\frac{1}{|I|}\int_{I}\frac{1}{w(x)}dx\right)\leq C$$ for all intervals $I$.
\end{defn}

\begin{thm}[Theorem 8 \cite{Hunt1973Weighted}]\label{Hunthm}
Suppose that $w$ is a non-negative function with period $2\pi$. Then $w$ satisfies the $A_{2}$ condition if and only if for any $f$ of period $2\pi$ such that $\int_{-\pi}^{\pi}|f(x)|^{2}w(x)dx<\infty$, 
$$\int_{-\pi}^{\pi}|S_{n}(f)(x)-f(x)|^{2}w(x)dx\to 0$$ as $n\to \infty$ where $S_{n}(f)$ denotes the classical Fourier partial sums of $f$.
\end{thm}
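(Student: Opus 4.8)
The plan is to prove the two implications separately. The reverse implication (convergence $\Rightarrow A_{2}$) is the softer one and is essentially the symmetrized version of the argument already carried out for statement $(4)$ of Theorem \ref{Mthm}; the forward implication ($A_{2}\Rightarrow$ convergence) rests on the weighted $L^{2}$ boundedness of the conjugate function and is the technical heart.

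For the converse, suppose $S_{n}f\to f$ in $L^{2}(w)$ for every $f$ with $\int|f|^{2}w<\infty$. Convergence for all such $f$ already forces $w,1/w\in L^{1}_{\mathrm{loc}}$, so for each interval $I$ the function $\frac{1}{w}\chi_{I}$ belongs to $L^{2}(w)$. The principle of uniform boundedness applied to $\{S_{n}\}$ on $L^{2}(w)$ gives a constant $B$ with $\|S_{n}\|_{L^{2}(w)\to L^{2}(w)}\le B$ for all $n$. Fixing $I$ with $|I|\le\frac{1}{8}$ and taking $N=\lfloor\frac{1}{8|I|}\rfloor$, I would write $S_{N}g(\theta)=\int g(x)D_{N}(\theta-x)\,dx$ and apply Lemma \ref{DirchletL} to obtain $|S_{N}(\frac{1}{w}\chi_{I})(\theta)|\ge N\int_{I}\frac{1}{w}$ for $\theta\in I$; substituting this into the uniform bound yields $N^{2}\big(\int_{I}\frac{1}{w}\big)^{2}\int_{I}w\le B\int_{I}\frac{1}{w}$, which rearranges to the $A_{2}$ inequality with constant proportional to $B$ (after the obvious rescaling from period $2\pi$ to period $1$, exactly as in the proof of Theorem \ref{Mthm}$(4)$).

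For the forward implication, the key reduction is the modulation identity expressing the partial sum operator through the Riesz projection $P$ onto non-negative frequencies:
$$S_{N}f=e^{-iNx}\,P\!\left(e^{iNx}f\right)-e^{i(N+1)x}\,P\!\left(e^{-i(N+1)x}f\right).$$
Since $f\mapsto e^{ikx}f$ is an isometry of $L^{2}(w)$ (it leaves $|f|$ unchanged), and $P$ is --- modulo the rank-one operator $f\mapsto\widehat{f}(0)$, which is bounded on $L^{2}(w)$ once $w,1/w\in L^{1}_{\mathrm{loc}}$ --- a fixed linear combination of the identity and the conjugate-function operator $f\mapsto\widetilde{f}$, uniform $L^{2}(w)$-boundedness of the family $\{S_{N}\}$ reduces to boundedness of $f\mapsto\widetilde{f}$ on $L^{2}(w)$. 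Granting that, uniform boundedness together with convergence on the dense subspace of trigonometric polynomials --- dense in $L^{2}(w)$ precisely because $w\in A_{2}$ gives $w\in L^{1}$ and $1/w\in L^{1}_{\mathrm{loc}}$ --- yields $S_{N}f\to f$ in $L^{2}(w)$ for all $f\in L^{2}(w)$ by the standard $\varepsilon/3$ argument.

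The main obstacle is therefore the weighted estimate $\int|\widetilde{f}|^{2}w\le C\int|f|^{2}w$ for $w\in A_{2}$, which I would prove by the good-$\lambda$ route of Coifman and Fefferman. First establish Muckenhoupt's weighted maximal inequality $\int(Mf)^{2}w\le C\int|f|^{2}w$ for $w\in A_{2}$ --- a Calder\'on--Zygmund covering argument exploiting the reverse H\"older inequality satisfied by $A_{2}$ (hence $A_{\infty}$) weights. Then prove a good-$\lambda$ inequality controlling the distribution of $\widetilde{f}$ by that of $Mf$ with respect to $w\,dx$, of the form $w\big(\{|\widetilde{f}|>2\lambda,\ Mf\le\gamma\lambda\}\big)\le C\gamma^{\eta}\,w\big(\{M\widetilde{f}>\lambda\}\big)$ for some $\eta>0$ and all small $\gamma>0$, via a Whitney/Calder\'on--Zygmund decomposition of the relevant level set together with the doubling property of $w$ and a size bound for the conjugate kernel off the supporting interval of each Whitney piece. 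Integrating in $\lambda$ and absorbing the $\widetilde{f}$ contribution gives $\|\widetilde{f}\|_{L^{2}(w)}\le C\|Mf\|_{L^{2}(w)}\le C'\|f\|_{L^{2}(w)}$, which closes the forward implication.
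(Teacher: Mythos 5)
This statement is not proved in the paper at all: it is quoted verbatim as Theorem 8 of \cite{Hunt1973Weighted} and used as a black box (in Corollary \ref{suffcase} and in Section \ref{S7}), so there is no internal proof to compare your attempt against. Judged on its own terms, your outline is the correct architecture for the Hunt--Muckenhoupt--Wheeden theorem. The converse half (convergence $\Rightarrow A_2$) is essentially the same Dirichlet-kernel/uniform-boundedness argument the paper itself runs in the proof of Theorem \ref{Mthm}(4), and it is sound once you justify the throwaway claim that convergence forces $w,1/w\in L^{1}$: for $S_n f$ even to be defined one needs $L^{2}(w)\subseteq L^{1}$, the closed graph theorem then bounds that inclusion, and the norm of the inclusion is exactly $\bigl(\int 1/w\bigr)^{1/2}$; similarly $S_nf\in L^{2}(w)$ for a trigonometric polynomial forces $w\in L^{1}$. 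Your modulation identity reducing $S_N$ to two conjugated copies of the Riesz projection is correct, as is the observation that modulations are isometries of $L^{2}(w)$, so uniform boundedness of $\{S_N\}$ does reduce to the single weighted bound for the conjugate function plus density of trigonometric polynomials.

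The caveat is that the forward half, as written, is a roadmap rather than a proof: the entire content of the theorem is concentrated in the weighted inequality $\|\widetilde f\|_{L^{2}(w)}\le C\|f\|_{L^{2}(w)}$ for $w\in A_2$, and you discharge it onto three further unproved results (the reverse H\"older inequality for $A_2$ weights, Muckenhoupt's weighted maximal theorem, and the good-$\lambda$ comparison of $\widetilde f$ with $Mf$), each of which is a substantial Calder\'on--Zygmund argument in its own right. That is a legitimate and standard route --- though it is Coifman and Fefferman's later proof rather than the original argument of \cite{Hunt1973Weighted}, which attacks the conjugate-function inequality directly --- but in a context where the result is being re-proved rather than cited, those three lemmas would have to be supplied. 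For the purposes of this paper, which only invokes the theorem, the citation is the appropriate treatment and nothing in your sketch contradicts it.
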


It seems plausible that if the Radon-Nikodym derivative $g$ of $\mu$ satisfies equation $(\ref{Meq})$, this would imply $$S_{n}(f\chi_{\{x: g(x)>0\}})\to f$$ in $L^{2}(\mu)$ for all $f\in L^{2}(\mu)$. However, it is clear that satisfying equation $(\ref{Meq})$ is strictly weaker than satisfying the $A_{2}$ condition, and it is not clear that a weight satisfying $(\ref{Meq})$ can even be extended to a weight satisfying the $A_{2}$ condition. 
Therefore, to establish a sufficiency result, we draw on results of Kurki and Mudarra where in \cite{KurkiExtension2022} they establish results about when weights possess $A_{2}$ extensions.

\begin{thm}[Theorem 1.1 \cite{KurkiExtension2022}]\label{extthm}
Let $X$ be a closed interval, $E\subseteq X$ where $|E|>0$, and $w$ be a weight on $E$. The following are equivalent:
\begin{enumerate}
    \item There is a weight $W$ on $X$ satisfying the $A_{2}$ condition such that $W=w$ almost everywhere on $E$.
    \item There is an $\epsilon>0$ and $B>0$ such that for any interval $I\subseteq X$,
    $$\left(\frac{1}{|I|}\int_{I\cap E}w^{1+\epsilon}\right)\left(\frac{1}{|I|}\int_{I\cap E}\frac{1}{w^{1+\epsilon}}\right)\leq B.$$
\end{enumerate}
\end{thm}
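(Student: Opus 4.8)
The plan is to treat the two implications separately. The direction $(1)\Rightarrow(2)$ is a soft consequence of the self-improving (openness) properties of Muckenhoupt classes, while $(2)\Rightarrow(1)$ is the substantive direction and is proved by constructing the extension by hand; the latter is due to Kurki and Mudarra, and the natural route is the one sketched below.

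For $(1)\Rightarrow(2)$, suppose $W$ is an $A_2$ weight on $X$ with $W=w$ almost everywhere on $E$. Since $W\in A_2(X)$, both $W$ and $W^{-1}$ lie in $A_\infty(X)$, so each satisfies a reverse H\"older inequality on $X$: there are $s,s'>1$ and $C\ge 1$ with $\left(\frac{1}{|I|}\int_I W^{s}\right)^{1/s}\le \frac{C}{|I|}\int_I W$ and $\left(\frac{1}{|I|}\int_I W^{-s'}\right)^{1/s'}\le \frac{C}{|I|}\int_I W^{-1}$ for every interval $I\subseteq X$. Choosing $\epsilon>0$ with $1+\epsilon\le\min\{s,s'\}$ and combining these with Jensen's inequality shows $W^{1+\epsilon}\in A_2(X)$, with $A_2$ constant controlled by that of $W$. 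Because $W^{1+\epsilon}\ge 0$ on all of $X$ and $W=w$ almost everywhere on $E$, replacing $\int_I$ by $\int_{I\cap E}$ in each factor only decreases it, so $(2)$ holds with $B=[W^{1+\epsilon}]_{A_2}$.

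For $(2)\Rightarrow(1)$, I would build $W$ via a Calder\'on--Zygmund/Whitney-type decomposition of $X$ adapted to the Lebesgue density of $E$: select a family of (dyadic) intervals $\{I_j\}$ that barely miss $E$ at their own scale but whose parents meet $E$ in a substantial fraction, so that $\{I_j\}$ covers $X\setminus E$ up to a null set with bounded overlap and $\ell(I_j)\asymp\operatorname{dist}(I_j,E)$ in an averaged sense. To each $I_j$ I attach a slightly larger interval $J_j\supseteq I_j$ with $|J_j\cap E|\asymp|J_j|$, set $W:=w$ on $E$, and on each $I_j$ set $W$ equal to the constant $\left(\frac{1}{|J_j|}\int_{J_j\cap E}w^{1+\epsilon}\,dx\right)^{1/(1+\epsilon)}$ --- an $L^{1+\epsilon}$-average of $w$ over the part of $E$ ``seen'' by $I_j$, the exponent being exactly the one supplied by $(2)$. (Equivalently, one may first extend $v:=w^{1+\epsilon}$ to an $A_2$ weight --- condition $(2)$ says precisely that $v$ obeys a \emph{restricted $A_2$} bound on $E$ --- and then take a $(1+\epsilon)$-th root, using that $A_2$ is stable under powers in $[0,1]$ by Jensen.)

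The heart of the argument is verifying $W\in A_2(X)$, which I would organize by cases on an arbitrary interval $I\subseteq X$: (a) when $I$ is comparable to a single $I_j$, $W$ is essentially constant on $I$ and the $A_2$ ratio is $O(1)$; (b) when $I$ meets $E$ but a nontrivial fraction of $I$ lies in the gaps, one controls the contributions of $W$ and $W^{-1}$ over the $I_j\subseteq I$ by the $L^{1+\epsilon}$-averages of $w$ and $w^{-1}$ over the shadows $J_j$, organizes these by generation so they sum against a geometric series, and closes the estimate with a single application of $(2)$ to an interval slightly larger than $I$; (c) when $|I\cap E|\asymp|I|$ one reads the $A_2$ ratio essentially directly off $(2)$, after checking that the gaps inside $I$ contribute only a bounded multiple of $\int_{I\cap E}w^{1+\epsilon}$ and of $\int_{I\cap E}w^{-(1+\epsilon)}$. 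I expect case (b) to be the main obstacle: choosing the shadows $J_j$ so that overlaps stay bounded, so that $W$ varies by only a bounded factor between neighbouring gaps, and so that the per-generation sums telescope and are absorbed by a single use of $(2)$ is the crux --- and it is precisely here that the strict inequality $\epsilon>0$, i.e.\ the extra $L^{1+\epsilon}$ room in the hypothesis, is what forces the cross-scale sums to converge.
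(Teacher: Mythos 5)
This statement is quoted verbatim from Kurki and Mudarra (\cite{KurkiExtension2022}, Theorem 1.1); the paper under review gives no proof of it, so there is nothing in-paper to compare your argument against, and it must stand on its own. Your direction $(1)\Rightarrow(2)$ does: the reverse H\"older inequality for $W$ and $W^{-1}$, interpolation down to the exponent $1+\epsilon$, and the trivial monotonicity of restricting the integrals from $I$ to $I\cap E$ give exactly condition $(2)$.

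The substantive direction $(2)\Rightarrow(1)$, however, is not proved. You describe a plausible stopping-time/Whitney-type construction and then explicitly defer the verification that the resulting $W$ lies in $A_2(X)$ --- your case (b), the cross-scale summation over the gap intervals --- identifying it yourself as ``the crux.'' That verification is the entire content of the theorem; without it the sketch is a plan, not a proof. Moreover, the parenthetical ``equivalent'' shortcut is circular: extending $v:=w^{1+\epsilon}$ ``first'' requires an extension theorem for weights satisfying only the \emph{plain} restricted $A_2$ bound on $E$ (no extra integrability room), and it is precisely the failure of that statement --- the plain restricted condition is necessary but not sufficient for extendability, which is why the theorem is formulated with the exponent $1+\epsilon$ in the first place --- that makes this result nontrivial. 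So the reduction assumes a stronger version of what is to be proved. To make the argument complete you would need to carry out the density-based selection of the intervals $I_j$ and shadows $J_j$ (note $X\setminus E$ need not be open, so a literal Whitney decomposition is unavailable), prove the bounded-overlap and comparability properties you assert, and execute the generation-by-generation estimate in which the $\epsilon$ of hypothesis $(2)$ produces the convergent geometric series.
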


This gives us a sufficiency result for this section.
\begin{cor}\label{suffcase}
Let $\mu$ be an absolutely continuous finite Borel measure on $[0, 1)$ with Radon-Nikodym derivative $g$ such that there exists an $\epsilon>0$ where $g^{1+\epsilon}$ satisfies equation (\ref{Meq}).
Then, there exists an invertible $T\in
B(L^{2}(\mu))$ and $g_{0} \in L^{2}(\mu)$ such that $\{T^{n}g_{0}\}_{n\in \mathbb{Z}}$ is Poisson dextrodual to $\{e^{2\pi i nx}\}$.
\end{cor}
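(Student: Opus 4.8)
The plan is to exhibit the operator and vector forced by Theorem \ref{Mthm} and verify the Poisson dextrodual property directly. Write $E=\{x\in[0,1): g(x)>0\}$; we may assume $\mu\neq 0$, so $|E|>0$. Set $T=M_{e^{2\pi ix}}$, which is invertible on $L^2(\mu)$ with inverse $M_{e^{-2\pi ix}}$, and set $g_0=\tfrac{1}{g}\chi_E$. A direct computation gives $\langle f,T^ng_0\rangle_\mu=\int_E f(x)e^{-2\pi inx}\,dx=\langle f\chi_E,e^{2\pi inx}\rangle_{L^2([0,1))}$, so that $\sum_{n=-M}^{M}\langle f,T^ng_0\rangle_\mu e^{2\pi inx}=S_M(f\chi_E)$, the symmetric classical Fourier partial sum of $f\chi_E$. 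Hence the corollary reduces to two points: that $g_0\in L^2(\mu)$, i.e. $\int_E\tfrac1g\,dx<\infty$, and that $S_M(f\chi_E)\to f$ in $L^2(\mu)=L^2(g\,dx)$ for every $f\in L^2(\mu)$.

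The key observation is that the hypothesis ``$g^{1+\epsilon}$ satisfies equation (\ref{Meq})'' is precisely condition (2) of Theorem \ref{extthm}, applied with ambient interval $X=[0,1]$, the set $E$ above, and weight $w=g|_E$: indeed $\int_I g^{1+\epsilon}=\int_{I\cap E} g^{1+\epsilon}$ and $\int_I g^{-(1+\epsilon)}\chi_E=\int_{I\cap E} g^{-(1+\epsilon)}$ since $g=0$ off $E$, and $\{g^{1+\epsilon}>0\}=E$. Theorem \ref{extthm} therefore produces a weight $W$ on $[0,1]$ satisfying the $A_2$ condition with $W=g$ almost everywhere on $E$. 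Because $W\in A_2$ forces $\int_0^1 W<\infty$ and $\int_0^1\tfrac1W<\infty$, and $W=g$ on $E$, this already gives $\int_E\tfrac1g=\int_E\tfrac1W<\infty$, settling $g_0\in L^2(\mu)$.

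It then remains to transfer the convergence. After the standard rescaling between period $1$ and period $2\pi$ and extending $W$ periodically, Theorem \ref{Hunthm} applies: for any $F$ with $\int_0^1|F|^2W\,dx<\infty$ one has $\int_0^1|S_M(F)-F|^2W\,dx\to 0$. Given $f\in L^2(\mu)$, take $F=f\chi_E$; then $\int_0^1|F|^2W\,dx=\int_E|f|^2W\,dx=\int_E|f|^2g\,dx\le\|f\|_{L^2(\mu)}^2<\infty$, so $S_M(f\chi_E)\to f\chi_E$ in $L^2(W\,dx)$. Restricting that integral to $E$, using $W=g$ there, and noting $f\chi_E=f$ on $E$ while $g=0$ off $E$, this says exactly $\|S_M(f\chi_E)-f\|_{L^2(\mu)}^2\to 0$, which is the Poisson dextrodual property for $\{T^ng_0\}_{n\in\mathbb Z}$ relative to $\{e^{2\pi inx}\}_{n\in\mathbb Z}$.

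The step I expect to require the most care is not any single computation but the matching of data: recognizing that the support set $E$ of $\mu$ is the correct choice for the ``partial domain'' $E$ in Theorem \ref{extthm}, and that because the extension $W$ agrees with $g$ \emph{exactly} on $E$ while the relevant test functions $f\chi_E$ vanish off $E$, the $A_2$-weighted $L^2$-convergence of Fourier series transfers cleanly to $L^2(\mu)$ even though $W$ and $g$ need not agree on $[0,1]\setminus E$. One should also double-check the harmless rescaling and periodization needed to invoke the period-$2\pi$ formulation of Theorem \ref{Hunthm}.
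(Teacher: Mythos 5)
Your proof is correct and follows essentially the same route as the paper: apply Theorem \ref{extthm} to extend $g$ restricted to $E=\{x:g(x)>0\}$ to an $A_{2}$ weight, then invoke Theorem \ref{Hunthm} for functions vanishing off $E$, with $T=M_{e^{2\pi i x}}$ and $g_{0}=\frac{1}{g}\chi_{E}$ as dictated by Theorem \ref{Mthm}. You merely supply details the paper's terse proof leaves implicit (the identification of the partial sums with $S_{M}(f\chi_{E})$ and the verification that $g_{0}\in L^{2}(\mu)$), so the argument is the same.
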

\begin{proof}
By Theorem \ref{extthm}, there is a $G$ on $[0,1)$ satisfying the $A_{2}$ condition where $G=g$ almost everywhere on $\{x: g(x)>0\}$.
The result follows from Theorem \ref{Hunthm} and Theorem \ref{Mthm} by taking functions $f$ on $[0,1)$ that are $\mu$ square integrable and vanish outside of $\{x: g(x)>0\}$.
\end{proof}

\begin{rem}
Lebesgue measure with unbounded weight $\frac{1}{\sqrt{x}}$ is a simple example of a measure that
possesses a sequence of the form $\{T^{n}g_{0}\}$ that is Poisson dextrodual to $\{e^{2\pi i nx}\}_{n\in \mathbb{Z}}$ since weight $\frac{1}{\sqrt{x}}$ satisfies the $A_{2}$ condition.
\end{rem}

\section{classifying the frame case.}\label{S6}

In this section, we classify measures that possess frames of the form $\{T^{n}g_{0}\}_{n\in \mathbb{Z}}$ that are Poisson dextrodual to $\{e^{2\pi inx}\}_{n\in \mathbb{Z}}$. The first theorem is one direction of the classification.

\begin{thm}\label{Lfthm}
Under the hypothesis of Theorem \ref{Mthm}, the following are equivalent:
\begin{enumerate}
    \item $\frac{1}{g_{0}}\in L^{\infty}(\mu)$.
    \item $\{e^{2\pi i nx}\}_{n\in \mathbb{Z}}$ is a Bessel sequence in $L^{2}(\mu)$.
    \item $\{T^{n}g_{0}\}$ is a lower semi-frame in $L^{2}(\mu)$.
\end{enumerate}
\end{thm}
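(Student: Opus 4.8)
The plan is to use Theorem \ref{Mthm} to strip the hypotheses down to concrete data: $T=M_{e^{2\pi i x}}$, $\mu=g\,dx$ with $g$ the Radon--Nikodym derivative, and $g_0=\tfrac1g$ $\mu$-a.e., so that $\overline{g_0}\,g=\chi_{\{g>0\}}$ as an element of $L^\infty(dx)$ and $\tfrac1{g_0}=g$ $\mu$-a.e. Everything then rests on translating the two square-sums into classical Fourier data. For $f\in L^2(\mu)$, Cauchy--Schwarz against $\sqrt g$ (together with $\mu$ finite) gives $fg\in L^1([0,1))$, and Cauchy--Schwarz using $|g_0|g=1$ on $\{g>0\}$ gives $f\chi_{\{g>0\}}\in L^1([0,1))$; moreover $\langle f,e^{2\pi i nx}\rangle_\mu=\widehat{fg}(n)$ and $\langle f,T^ng_0\rangle_\mu=\widehat{f\chi_{\{g>0\}}}(n)$. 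Since an $L^1$ function with square-summable Fourier coefficients lies in $L^2$ (Riesz--Fischer / uniqueness of Fourier coefficients), I get
\[
\sum_n|\langle f,e^{2\pi i nx}\rangle_\mu|^2=\int_{\{g>0\}}|f|^2g^2\,dx,\qquad
\sum_n|\langle f,T^ng_0\rangle_\mu|^2=\int_{\{g>0\}}|f|^2\,dx,
\]
each right-hand side read as $+\infty$ when the integrand is not integrable, while $\|f\|_\mu^2=\int_{\{g>0\}}|f|^2g\,dx$.

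With these formulas all three statements become the single assertion ``$g$ is essentially bounded on $\{g>0\}$'', which is precisely (1). The implications $(1)\Rightarrow(2)$ and $(1)\Rightarrow(3)$ are then immediate from the pointwise estimates $|f|^2g^2\le\|g\|_\infty|f|^2g$ and $|f|^2\ge\|g\|_\infty^{-1}|f|^2g$ on $\{g>0\}$; in the $(1)\Rightarrow(3)$ case, if the relevant series is $+\infty$ the lower bound is trivial. For the converses I would test against characteristic functions of super-level sets $E_K=\{g>K\}$: if $|E_K|>0$ then $\chi_{E_K}\in L^2(\mu)$ and $\chi_{E_K}$ is bounded with finite-measure support, so its transform lies in $L^2([0,1))$ and $\sum_n|\langle\chi_{E_K},e^{2\pi i nx}\rangle_\mu|^2=\int_{E_K}g^2$, $\sum_n|\langle\chi_{E_K},T^ng_0\rangle_\mu|^2=|E_K|$. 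A Bessel bound $B$ for $\{e^{2\pi i nx}\}$ then forces $K\mu(E_K)\le\int_{E_K}g^2\le B\mu(E_K)$, hence $K\le B$; a lower-frame bound $A$ for $\{T^ng_0\}$ forces $A\mu(E_K)\le|E_K|$ and, with $\mu(E_K)\ge K|E_K|$, gives $AK\le1$. In either case $|E_K|=0$ for $K$ large, so $g$ is essentially bounded, i.e. (1) holds.

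The one point needing care -- more bookkeeping than genuine obstacle -- is that for $f\in L^2(\mu)$ the functions $fg$ and $f\chi_{\{g>0\}}$ need not lie in $L^2([0,1))$, so Parseval cannot be applied blindly. This actually works in our favour: in the lower-semi-frame direction an infinite series only strengthens the inequality, and in the Bessel direction an infinite series is an outright contradiction, so the super-level-set argument goes through in every case. The only input beyond Theorem \ref{Mthm} is the elementary fact just quoted about $L^1$ functions with $\ell^2$ Fourier coefficients. I would write the final proof as the two chains $(1)\Rightarrow(2)\Rightarrow(1)$ and $(1)\Rightarrow(3)\Rightarrow(1)$, noting that the argument even yields the quantitative correspondence: a Bessel bound $B$ matches $\|\tfrac1{g_0}\|_{L^\infty(\mu)}\le B$, and a lower-frame bound $A$ matches $\|\tfrac1{g_0}\|_{L^\infty(\mu)}\le\tfrac1A$.
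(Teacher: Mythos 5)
Your proposal is correct, but it takes a genuinely different route from the paper. The paper proves the cycle $(1)\Rightarrow(2)\Rightarrow(3)\Rightarrow(1)$ by soft arguments that lean directly on the Poisson dextrodual property: for $(1)\Rightarrow(2)$ it applies the dextrodual expansion to $f/\overline{g_0}$ and invokes uniform boundedness to get a Bessel bound; $(2)\Rightarrow(3)$ is a Cauchy--Schwarz duality argument $\|f\|^2=\lim\sum\langle f,T^ng_0\rangle\langle e^{2\pi inx},f\rangle\le\|\{\langle f,T^ng_0\rangle\}\|_{\ell^2}\sqrt{B}\,\|f\|$ that works for any dextrodual pair; and $(3)\Rightarrow(1)$ combines the lower bound with $\sum_n|\langle f,g_0e^{2\pi inx}\rangle|^2=\langle f,\overline{g_0}f\rangle\le\|f\|\,\|g_0f\|$ for bounded $f$ and then tests on the set $\{g_0<\tfrac{1}{2B}\}$. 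You instead push Theorem \ref{Mthm} all the way to explicit classical Fourier data, using Riesz--Fischer plus uniqueness of Fourier coefficients of $L^1$ functions to obtain the exact identities $\sum_n|\langle f,e^{2\pi inx}\rangle_\mu|^2=\int_{\{g>0\}}|f|^2g^2\,dx$ and $\sum_n|\langle f,T^ng_0\rangle_\mu|^2=\int_{\{g>0\}}|f|^2\,dx$ (with the $+\infty$ convention), after which all three conditions collapse to essential boundedness of $g$ on its support, verified by super-level sets. Your integrability checks ($fg\in L^1$ from finiteness of $\mu$, $f\chi_{\{g>0\}}\in L^1$ from $g_0\in L^2(\mu)$) are the right ones, and the convention handles the possibly infinite series correctly in both directions. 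What your version buys is transparency and sharp constants (Bessel bound $B$ iff $g\le B$ a.e.\ on its support, lower bound $A$ iff $g\le 1/A$), and it bypasses the uniform boundedness principle entirely; what the paper's version buys is that its $(2)\Rightarrow(3)$ step is purely abstract and does not use the concrete description of $T$, $g_0$, and $\mu$ at all.
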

\begin{proof}
$(1)\implies (2)$

For all $f\in L^{2}(\mu)$, $\frac{f}{g_{0}}\in L^{2}(\mu)$, giving us that
$\lim \sum_{n=-M}^{M}\langle f, e^{2\pi i nx}\rangle_{\mu} e^{2\pi i nx}$ exists for all $f\in L^{2}(\mu)$ by Theorem \ref{Mthm}. This implies
$$\langle \lim \sum_{n=-M}^{M}\langle f, e^{2\pi i nx}\rangle_{\mu} e^{2\pi i nx},f\rangle_{\mu} =\sum_{n\in \mathbb{Z}}|\langle f, e^{2\pi inx}\rangle_{\mu} |^{2}<\infty$$ for all $f\in L^{2}(\mu)$. Therefore, the operator $f\to \{\langle f, e^{2\pi inx}\rangle_{\mu}\}_{n\in \mathbb{Z}} $ is a point-wise limit of bounded operators and is therefore bounded.

$(2)\implies (3)$

For any $f\in L^{2}(\mu)$ we have the following by Cauchy-Schwarz,
$$\|f\|^{2}=\lim\sum_{n=-M}^{M}\langle f, T^{n}g_{0}\rangle \langle e^{2\pi i nx}, f\rangle \leq (\|\{\langle f, T^{n}g_{0}\rangle\}\|_{\ell^{2}})\sqrt{B}\|f\|$$
for some $B$ by the Bessel condition. Dividing the last and first term by $\|f\|$ gets the result.

$(3) \implies (1)$

Let $f\in L^{\infty}(\mu)$, we have by the lower semi-frame condition and Cauchy-Schwarz,
$$\langle \lim\sum_{n=-M}^{M}\langle f, g_{0}e^{2\pi i nx}\rangle e^{2\pi i nx},\overline{g_{0}} f\rangle=\sum_{n}|\langle f,g_{0}e^{2\pi i nx}\rangle|^{2}\leq $$
$$\sqrt{B}\|\{\langle f, g_{0}e^{2\pi i nx}\rangle \}\|_{\ell^{2}} \|g_{0}f\|$$ for some $B$.
Then we have
\begin{equation}\label{bbi}\frac{1}{B}\|f\|^{2}\leq \sum_{n}|\langle f,g_{0}e^{2\pi i nx}\rangle|^{2}\leq B \|g_{0}f\|^{2}.\end{equation}

Suppose that $g_{0}<\frac{1}{2B}$ on a Borel set $E\subseteq [0,1)$. By letting $f=\chi_{E}$ in equation (\ref{bbi}),
$$\mu(E)\leq B^{2}\int_{E}|g_{0}|^{2}d\mu\leq \frac{1}{4}\mu(E)\implies \mu(E)=0.$$
Therefore, $$g_{0}\geq \frac{1}{2B}$$ $\mu$ almost everywhere.
\end{proof}

The following theorem provides the other direction for classification.

\begin{thm}\label{Besselthm}
Under the hypothesis of Theorem \ref{Mthm}, the following are equivalent:
\begin{enumerate}
    \item $g_{0}\in L^{\infty}(\mu)$.
    \item $\{T^{n}g_{0}\}$ is a Bessel sequence in $L^{2}(\mu)$.
    \item $\{e^{2\pi i nx}\}_{n\in \mathbb{Z}}$ is a lower semi-frame in $L^{2}(\mu)$.
\end{enumerate}
\end{thm}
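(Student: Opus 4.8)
The plan is to run the same circle of implications $(1)\Rightarrow(2)\Rightarrow(3)\Rightarrow(1)$ used for Theorem \ref{Lfthm}, but with the roles of $\{T^{n}g_{0}\}$ and $\{e^{2\pi i nx}\}$ interchanged, exploiting throughout the structural conclusions of Theorem \ref{Mthm}: $T=M_{e^{2\pi i x}}$, $\mu$ is absolutely continuous with Radon--Nikodym derivative $g$, and $g_{0}=\frac{1}{g}$ $\mu$-almost everywhere. In particular $T^{n}g_{0}=g_{0}e^{2\pi i nx}$, the product $g_{0}g$ equals $\chi_{\{g>0\}}$ as a nonnegative function, and $\mu$ is carried by $\{g>0\}$ since $\mu(\{g=0\})=\int_{\{g=0\}}g\,dx=0$; consequently $\langle f,T^{n}g_{0}\rangle_{\mu}=\int f\,e^{-2\pi i nx}\chi_{\{g>0\}}\,dx$ is the $n$-th Fourier coefficient of $f\chi_{\{g>0\}}$, and $\langle f,e^{2\pi i nx}\rangle_{\mu}=\int f\,e^{-2\pi i nx}g\,dx$ is the $n$-th Fourier coefficient of $fg$, whenever these functions lie in $L^{1}([0,1))$.

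For $(1)\Rightarrow(2)$ I would observe that $g_{0}\le C$ $\mu$-a.e. is equivalent to $g\ge 1/C$ Lebesgue-a.e. on $\{g>0\}$, so that for every $f\in L^{2}(\mu)$ one has $\int_{\{g>0\}}|f|^{2}\,dx\le C\int|f|^{2}g\,dx=C\|f\|_{\mu}^{2}$, i.e. $f\chi_{\{g>0\}}\in L^{2}([0,1))$ with controlled norm. Then Parseval's identity in $L^{2}([0,1))$ gives $\sum_{n}|\langle f,T^{n}g_{0}\rangle_{\mu}|^{2}=\|f\chi_{\{g>0\}}\|_{L^{2}([0,1))}^{2}\le C\|f\|_{\mu}^{2}$, so $\{T^{n}g_{0}\}$ is Bessel. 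For $(2)\Rightarrow(3)$ I would reuse almost verbatim the computation from the corresponding step of Theorem \ref{Lfthm}: if $B$ is a Bessel bound for $\{T^{n}g_{0}\}$, then the Poisson dextrodual identity gives $\|f\|_{\mu}^{2}=\lim_{M}\sum_{n=-M}^{M}\langle f,T^{n}g_{0}\rangle_{\mu}\langle e^{2\pi i nx},f\rangle_{\mu}$, and Cauchy--Schwarz on the partial sums together with the Bessel bound yields $\|f\|_{\mu}^{2}\le\sqrt{B}\,\|f\|_{\mu}\big(\sum_{n}|\langle f,e^{2\pi i nx}\rangle_{\mu}|^{2}\big)^{1/2}$, hence $\frac{1}{B}\|f\|_{\mu}^{2}\le\sum_{n}|\langle f,e^{2\pi i nx}\rangle_{\mu}|^{2}$ for all $f\in L^{2}(\mu)$.

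For $(3)\Rightarrow(1)$ I would test the lower semi-frame inequality against indicator functions. Let $A>0$ be a lower semi-frame bound for $\{e^{2\pi i nx}\}$, fix $K>0$, and let $E\subseteq\{g\le K\}$ be Borel; then $g\chi_{E}$ is bounded, hence lies in $L^{2}([0,1))$, and $\langle\chi_{E},e^{2\pi i nx}\rangle_{\mu}$ is the $n$-th Fourier coefficient of $g\chi_{E}$, so Parseval gives $\sum_{n}|\langle\chi_{E},e^{2\pi i nx}\rangle_{\mu}|^{2}=\int_{E}g^{2}\,dx$. The lower semi-frame inequality then reads $A\mu(E)=A\int_{E}g\,dx\le\int_{E}g^{2}\,dx$, i.e. $\int_{E}g(g-A)\,dx\ge0$. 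Since $E\subseteq\{g\le K\}$ and $K$ are arbitrary and $g<\infty$ a.e., this forces $g(g-A)\ge 0$ Lebesgue-a.e., so a.e. either $g=0$ or $g\ge A$; therefore $g_{0}=\frac{1}{g}\le\frac{1}{A}$ $\mu$-a.e., that is $g_{0}\in L^{\infty}(\mu)$.

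I expect the only genuine obstacle to be the integrability bookkeeping once $g$ (equivalently $g_{0}$) is unbounded: in $(1)\Rightarrow(2)$ the hypothesis $g_{0}\in L^{\infty}(\mu)$ must be invoked precisely to place $f\chi_{\{g>0\}}$ in $L^{2}([0,1))$ before applying Parseval, and in $(3)\Rightarrow(1)$ one must localize to $\{g\le K\}$ so that $g\chi_{E}\in L^{2}([0,1))$, and then let $K\to\infty$ using that $g$ is finite a.e. Apart from these caveats, the argument is a routine dual of the proofs of Theorems \ref{Mthm} and \ref{Lfthm}.
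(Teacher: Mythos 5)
Your argument is correct and follows the same cyclic scheme $(1)\Rightarrow(2)\Rightarrow(3)\Rightarrow(1)$ as the paper, with the middle leg $(2)\Rightarrow(3)$ identical, but the other two legs are carried out by genuinely different means. For $(1)\Rightarrow(2)$ the paper pairs the Poisson dextrodual expansion of $f$ against $fg_{0}$ and applies Cauchy--Schwarz to $\langle f, fg_{0}\rangle$, whereas you bypass the dextrodual identity entirely and compute $\sum_{n}|\langle f,T^{n}g_{0}\rangle_{\mu}|^{2}=\|f\chi_{\{g>0\}}\|_{L^{2}([0,1))}^{2}\leq C\|f\|_{\mu}^{2}$ by Parseval; both are valid, and yours makes explicit that this implication only needs the structural conclusions of Theorem \ref{Mthm}. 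For $(3)\Rightarrow(1)$ the paper assumes $g\leq B/3$ on a positive-measure subset of the support, decomposes it into level sets $E_{k}$, and extracts the numerical contradiction $3\leq (k+1)/k^{2}$; you instead derive $A\int_{E}g\,dx\leq\int_{E}g^{2}\,dx$ from the same Parseval identity for $g\chi_{E}$ (after truncating to $\{g\leq K\}$ to guarantee square-integrability) and test it directly on $E=\{0<g<A\}$, forcing $|E|=0$ and yielding the explicit bound $g\geq A$ on the support. Your version of this step is cleaner and slightly sharper than the paper's, and the integrability caveats you flag --- placing $f\chi_{\{g>0\}}$ and $g\chi_{E}$ in $L^{2}([0,1))$ before invoking Parseval --- are precisely the right ones and are handled correctly.
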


\begin{proof}
$(1)\implies (2)$

For any $f\in L^{2}(\mu)$, by Cauchy-Schwarz we have
$$\langle \lim \sum_{n=-M}^{M}\langle f, g_{0}e^{2\pi i nx}\rangle e^{2\pi i nx}, fg_{0}\rangle =\sum_{n}|\langle f, g_{0}e^{2\pi i nx}\rangle |^{2}\leq \|f\| \ \|fg_{0}\|\leq \|g_{0}\|_{\infty} \ \|f\|^{2}.$$

$(2)\implies (3)$

This argument is the same as $(2)\implies (3)$ in Theorem \ref{Lfthm}.

$(3)\implies (1)$

Suppose for the sake of contradiction that $g(x)\leq \frac{B}{3}$ on a set of positive Lebesgue measure $S\subseteq \{x: g(x)>0\}$ where $B$ is the lower semi-frame bound from $(3)$. We have
$$S=\bigcup_{n=1}^{\infty}E_{n}$$ where $E_{n}=\{x\in S : \frac{\frac{B}{3}}{n+1}<g(x)\leq \frac{\frac{B}{3}}{n}\}$. There exists a $k\geq 1$ such that $E_{k}$ has positive Lebesgue measure. We have
$$\frac{(\frac{B}{3})B}{(k+1)}|E_{k}|\leq B\int_{E_{k}}gdx\leq \sum_{n\in \mathbb{Z}}|\langle \chi_{E_{k}},e^{2\pi i nx}\rangle_{\mu}|^{2}$$
$$=\int_{E_{k}}g^{2}dx\leq \frac{(\frac{B}{3})^{2}}{k^{2}}|E_{k}|\implies
3\leq \frac{k+1}{k^{2}},$$ which is a contradiction. Then $g(x)$ is bounded below on its support, making $g_{0}$ bounded $\mu$ almost everywhere by Theorem \ref{Mthm}.
\end{proof}
To establish our full classification result, we use the result of Lai:

\begin{thm}[Theorem 1.1 \cite{Lai2011Fourier}]\label{absc}
    If $\mu$ is an absolutely continuous Borel probability measure on $[0,1)$, then $\mu$ possesses a frame of exponentials for $L^{2}(\mu)$ if and only if the Radon-Nykodym derivative of $\mu$ is bounded above and below on its support.
\end{thm}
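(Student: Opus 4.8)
The plan is to prove the two implications separately; sufficiency is elementary, and necessity rests on standard Fourier-frame density theory. For sufficiency, suppose the Radon--Nikodym derivative $g=d\mu/dx$ satisfies $0<c\le g\le C<\infty$ on its support $E$; I would check that the integer-frequency system $\{e^{2\pi inx}\}_{n\in\mathbb{Z}}$ is already a frame for $L^2(\mu)$. The point is that for $f\in L^2(\mu)$ one has $fg\in L^2([0,1),dx)$, because $\int_0^1|fg|^2\,dx=\int_E|f|^2g^2\,dx\le C\|f\|_\mu^2$, and that $\langle f,e^{2\pi inx}\rangle_\mu$ is precisely the $n$-th Fourier coefficient of $fg$ with respect to Lebesgue measure. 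Parseval's identity for the orthonormal basis $\{e^{2\pi inx}\}_{n\in\mathbb{Z}}$ of $L^2([0,1),dx)$ then gives $\sum_{n\in\mathbb{Z}}|\langle f,e^{2\pi inx}\rangle_\mu|^2=\int_E|f|^2g^2\,dx$, and squeezing $g$ between $c$ and $C$ on $E$ converts this into the frame inequality with bounds $c,C$.

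For necessity, let $\{e^{2\pi i\lambda_nx}\}$ be a frame for $L^2(\mu)$ with bounds $A,B$; write $\Lambda=\{\lambda_n\}$ and $E=\operatorname{supp}(g)$, which has positive Lebesgue measure since $\mu\ne0$. Everything is driven by the identity $\langle f,e^{2\pi i\lambda_nx}\rangle_\mu=\widehat{fg}(\lambda_n)$. I would first record two structural facts: (i) the Bessel bound forces $\Lambda$ to be relatively uniformly discrete, i.e. $\sup_t\#(\Lambda\cap[t,t+1])<\infty$, via the usual Plancherel--P\'olya argument (test against modulated bumps concentrated near a Lebesgue point of $g$ at which $g>0$); in particular $\{e^{2\pi i\lambda_nx}\}$ is a Bessel sequence for $L^2$ of any bounded interval; and (ii) multiplying the frame by the indicator of a fixed small set on which $g$ is comparable to a constant (equivalently, projecting onto the corresponding subspace of $L^2(\mu)$, whose $\mu$-norm and Lebesgue norm are equivalent there) produces a genuine exponential frame for $L^2$ of a set of positive measure, so the lower Beurling density satisfies $D^-(\Lambda)>0$ by Landau's density theorem. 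With these in hand I would show $g$ is bounded below: if $\operatorname{ess\,inf}_E g=0$, choose Lebesgue points $y_k$ of $g$ with $0<g(y_k)=:\rho_k\to0$, scales $\delta_k\to0$, and sets $G_k\subseteq(y_k-\delta_k,y_k+\delta_k)$ of measure $\asymp\delta_k$ on which $g\asymp\rho_k$; testing the lower frame inequality with $f_k=\chi_{G_k}$ and the interval Bessel bound from (i) gives $A\rho_k\delta_k\lesssim\sum_n|\widehat{g\chi_{G_k}}(\lambda_n)|^2\lesssim\|g\chi_{G_k}\|_{L^2}^2\asymp\rho_k^2\delta_k$, hence $A\lesssim\rho_k\to0$, a contradiction.

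Bounding $g$ above is the step I expect to be the main obstacle. If $\operatorname{ess\,sup}_E g=\infty$, choose Lebesgue points $z_k$ of $g$ with $g(z_k)=:M_k\to\infty$, scales $\delta_k\to0$, and sets $H_k\subseteq I_k:=(z_k-\delta_k,z_k+\delta_k)$ of measure $\asymp\delta_k$ on which $g\asymp M_k$, and test the \emph{Bessel} inequality for $L^2(\mu)$ with the reciprocal bump $f_k=\chi_{H_k}/g$: since $\langle f_k,e^{2\pi i\lambda_nx}\rangle_\mu=\widehat{\chi_{H_k}}(\lambda_n)$ and $\|f_k\|_\mu^2=\int_{H_k}g^{-1}\asymp\delta_k/M_k$, this yields $\sum_n|\widehat{\chi_{H_k}}(\lambda_n)|^2\le B\delta_k/M_k$. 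The difficulty is that the frame property of $L^2(\mu)$ is invariant under rescaling $g$ on a fixed set, so the lower frame inequality for $L^2(\mu)$ alone produces nothing here; one must instead use that, since $|I_k|\to0<D^-(\Lambda)$, the quantitative form of Beurling's theorem makes $\{e^{2\pi i\lambda_nx}\}$ a frame for $L^2(I_k)$ with a lower bound $A_0>0$ that does not degenerate as $|I_k|\to0$. Applied to $\chi_{H_k}\in L^2(I_k)$, this gives $\sum_n|\widehat{\chi_{H_k}}(\lambda_n)|^2\ge A_0|H_k|\asymp A_0\delta_k$, forcing $M_k\lesssim B/A_0$ and the desired contradiction. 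Finally, once $g$ is bounded above and below on $E$, the sufficiency argument already exhibits a concrete exponential frame, which closes the equivalence.
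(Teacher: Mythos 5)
This statement is Theorem~1.1 of \cite{Lai2011Fourier}, which the paper imports as a quoted result and does not prove, so there is no internal argument to compare against; judged on its own, your reconstruction is correct and follows the standard density-theoretic route that the cited source itself uses. The sufficiency computation via $\langle f,e^{2\pi i nx}\rangle_\mu=\widehat{fg}(n)$ and Parseval is exactly right. On the necessity side you correctly isolate the two comparisons that drive everything: an upper comparison $\sum_\lambda|\widehat{h}(\lambda)|^2\lesssim\|h\|_{L^2(dx)}^2$ for $h$ supported in a bounded interval (from relative separation of $\Lambda$, itself forced by the Bessel bound), which rules out $\operatorname{ess\,inf}g=0$, and a lower comparison $\sum_\lambda|\widehat{h}(\lambda)|^2\geq A_0\|h\|_{L^2(I)}^2$ uniformly over sufficiently short intervals $I$ (from $D^-(\Lambda)>0$ via Landau together with the Beurling--Seip sampling theorem), which rules out $\operatorname{ess\,sup}g=\infty$; the substitution $f=h/g$ that converts $\mu$-inner products into ordinary Fourier coefficients is the right device in both the upper-bound step and in producing the exponential frame on a level set of $g$ needed for Landau. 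Two points are glossed over but are standard and repairable: (a) the Beurling--Seip sufficient condition requires a \emph{separated} set of lower Beurling density exceeding $|I|$, while the Bessel bound only makes $\Lambda$ a finite union of separated sets, so one must extract a separated subset of positive lower density (a short combinatorial selection argument); and (b) the uniformity of $A_0$ over all short intervals should be justified explicitly --- it follows because the frame bounds of $\{e^{2\pi i\lambda x}\}$ on $L^2(I)$ are invariant under translating $I$ and the lower bound only improves when $I$ shrinks --- since this uniformity is precisely what yields the contradiction $M_k\lesssim B/A_0$.
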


Furthermore, we show unconditional convergence of Fourier series of this type can only be achieved with true frame series. This follows from results of Heil and Yu in \cite{Heil2023Convergence} and Gohberg in \cite{Gohberg1969Theory}:

\begin{lem}[Lemma 2.7 \cite{Heil2023Convergence}]\label{Heil}
Let $\{x_{n}\}$ and $\{y_{n}\}$ be sequences in Hilbert space $H$. For any $f\in H$, the series $\sum_{n}\langle f, x_{n}\rangle y_{n}$ converges unconditionally if and only if for any $f\in H$, the series $\sum_{n}\langle f, y_{n}\rangle x_{n}$ converges unconditionally.
\end{lem}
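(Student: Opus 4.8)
The statement is symmetric in $\{x_n\}$ and $\{y_n\}$, so it suffices to prove one implication: if $\sum_n\langle f,x_n\rangle y_n$ converges unconditionally for every $f\in H$, then $\sum_n\langle g,y_n\rangle x_n$ converges unconditionally for every $g\in H$. The plan is (i) to promote the hypothesis to a uniform norm bound on finite multiplier partial sums of $\sum_n\langle f,x_n\rangle y_n$, (ii) to transfer that bound to the candidate series by passing to Hilbert space adjoints of finite-rank operators, and (iii) to upgrade the resulting ``bounded partial sums'' statement back to genuine unconditional convergence, using that a Hilbert space contains no isomorphic copy of $c_0$.

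For (i) I would first recall the standard fact that an unconditionally convergent series $\sum_n v_n$ in a Banach space has $\sup\{\,\|\sum_{n\in F}\lambda_n v_n\|:F\subseteq\mathbb N\ \text{finite},\ |\lambda_n|\le 1\,\}<\infty$; a layer-cake decomposition of the multipliers $\lambda_n$ reduces this to boundedness of the unsigned finite partial sums $\sum_{n\in F}v_n$, which is immediate from unconditional convergence. Applying this with $v_n=\langle f,x_n\rangle y_n$, the finite-rank operators $A_{F,\lambda}\colon h\mapsto\sum_{n\in F}\lambda_n\langle h,x_n\rangle y_n$ (over finite $F$ and $|\lambda_n|\le 1$) are pointwise bounded at each $f$, so by the uniform boundedness principle there is a constant $C$ with $\|\sum_{n\in F}\lambda_n\langle f,x_n\rangle y_n\|\le C\|f\|$ for all $f$, all finite $F$, and all $|\lambda_n|\le 1$. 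For (ii), a direct computation gives $A_{F,\lambda}^{*}\colon h\mapsto\sum_{n\in F}\overline{\lambda_n}\langle h,y_n\rangle x_n$, and since $\|A_{F,\lambda}^{*}\|=\|A_{F,\lambda}\|\le C$ we obtain $\|\sum_{n\in F}\mu_n\langle g,y_n\rangle x_n\|\le C\|g\|$ for every $g$, every finite $F$, and every $|\mu_n|\le 1$; that is, $\sum_n\langle g,y_n\rangle x_n$ is weakly unconditionally Cauchy.

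Step (iii) is where the Hilbert space structure is essential, and it is the step I expect to be the real obstacle: the naive idea of simply taking adjoints of the partial-sum operators fails, since strong-operator convergence does not pass to adjoints, so one cannot avoid genuinely using unconditionality here. I would argue directly: if $\sum_n v_n$ is not unconditionally convergent, then negating the Cauchy criterion for the net of finite partial sums produces pairwise disjoint finite sets $F_1,F_2,\dots$ with $\|w_k\|\ge\delta>0$, where $w_k=\sum_{n\in F_k}v_n$; the $w_k$ inherit the same uniform multiplier bound, so Rademacher averaging yields $\sum_k\|w_k\|^2=\mathbb E_{\varepsilon}\big\|\sum_k\varepsilon_k w_k\big\|^2\le C^2$, contradicting $\|w_k\|\ge\delta$. (Equivalently one may invoke the Bessaga--Pe\l{}czy\'nski theorem: in a Banach space with no copy of $c_0$, in particular a reflexive space, every weakly unconditionally Cauchy series is unconditionally convergent.) Applying this with $v_n=\langle g,y_n\rangle x_n$ completes the implication, and symmetry gives the stated equivalence. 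Everything outside step (iii) — the layer-cake bound, the uniform boundedness application, and the adjoint identity — is routine.
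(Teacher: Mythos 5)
Your proposal is correct. Note, though, that the paper does not prove this statement at all: it is imported verbatim as Lemma 2.7 of \cite{Heil2023Convergence}, so there is no in-paper argument to compare against. What you have written is essentially the standard proof of that cited result: the bounded-multiplier characterization of unconditional convergence plus the uniform boundedness principle gives a uniform bound $\|\sum_{n\in F}\lambda_n\langle f,x_n\rangle y_n\|\le C\|f\|$; the adjoint identity $A_{F,\lambda}^{*}h=\sum_{n\in F}\overline{\lambda_n}\langle h,y_n\rangle x_n$ transfers it; and the upgrade from uniformly bounded multiplier partial sums to unconditional convergence is exactly the Bessaga--Pe\l{}czy\'nski step, for which your direct Rademacher argument (disjoint blocks $w_k$ with $\|w_k\|\ge\delta$, $\sum_k\|w_k\|^2=\mathbb{E}_\varepsilon\|\sum_k\varepsilon_k w_k\|^2\le C^2\|g\|^2$) is a clean Hilbert-space shortcut that avoids quoting the general Banach-space theorem. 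All three steps check out, including the negation of the Cauchy criterion for the net of finite partial sums used to produce the disjoint blocks. The only caveat worth recording is that your step (i) reduction of complex multipliers $|\lambda_n|\le 1$ to unsigned partial sums needs the usual decomposition into real/imaginary and positive/negative parts before the layer-cake argument applies, but this is routine and does not affect the proof.
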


\begin{thm}[Lemma 2.2 \cite{Gohberg1969Theory}]\label{Gohberg}
Suppose sequence $\{x_{n}\}$ in a Hilbert space $H$ has the series $\sum_{n}x_{n}$ converge unconditionally, then $$\sum_{n}\|x_{n}\|^{2}<\infty.$$
\end{thm}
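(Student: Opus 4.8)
The plan is to prove the classical Orlicz-type theorem — an unconditionally convergent series in a Hilbert space is square-summable — by Rademacher averaging. After relabeling we may take the index set to be $\mathbb{N}$, and I would first record the standard equivalences: unconditional convergence of $\sum_n x_n$ is the same as convergence of $\sum_{n\in A} x_n$ for every subset $A\subseteq\mathbb{N}$, which is in turn the same as convergence of $\sum_n \epsilon_n x_n$ for every sign sequence $\epsilon\in\{-1,1\}^{\mathbb{N}}$. The first real step is to show that the finite partial sums are uniformly bounded over subsets:
\[
K := \sup\Big\{\,\Big\|\sum_{n\in F} x_n\Big\| \;:\; F\subseteq\mathbb{N},\ F \text{ finite}\,\Big\} < \infty.
\]
I would argue by contradiction with a gliding-hump construction: if this supremum were infinite then, because the part of any $F$ lying in $\{1,\dots,N-1\}$ contributes at most the finite sum $\sum_{n=1}^{N-1}\|x_n\|$, the supremum restricted to finite sets $F\subseteq\{N,N+1,\dots\}$ would still be infinite for every $N$; one could therefore choose finite sets $F_1,F_2,\dots$ lying in strictly increasing, disjoint ranges of indices with $\|\sum_{n\in F_k} x_n\|$ bounded below away from $0$, and then the subset $A=\bigcup_k F_k$ would violate the Cauchy criterion for $\sum_{n\in A} x_n$, contradicting subset-sum convergence.

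Granting $K<\infty$, the second step is to pass to signed sums: for any $N$ and any $\epsilon\in\{-1,1\}^N$, split $\{1,\dots,N\}$ into $F^{+}=\{n:\epsilon_n=1\}$ and $F^{-}=\{n:\epsilon_n=-1\}$, so that $\sum_{n=1}^N \epsilon_n x_n = \sum_{n\in F^{+}} x_n - \sum_{n\in F^{-}} x_n$ and hence $\big\|\sum_{n=1}^N \epsilon_n x_n\big\|\le 2K$ uniformly in $N$ and in $\epsilon$. The third step is the averaging identity: for fixed $N$, averaging the squared norm over all $2^N$ sign patterns and using that $2^{-N}\sum_{\epsilon\in\{-1,1\}^N}\epsilon_m\epsilon_n$ equals $1$ when $m=n$ and $0$ otherwise,
\[
\frac{1}{2^N}\sum_{\epsilon\in\{-1,1\}^N}\Big\|\sum_{n=1}^N \epsilon_n x_n\Big\|^2 \;=\; \sum_{n=1}^N \|x_n\|^2,
\]
since the off-diagonal terms $\langle x_m,x_n\rangle$ cancel in the average (the diagonal terms are the real, nonnegative quantities $\|x_n\|^2$, so no conjugation subtleties arise in the complex case). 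Combining with the second step gives $\sum_{n=1}^N \|x_n\|^2 \le (2K)^2 = 4K^2$ for every $N$, and letting $N\to\infty$ yields $\sum_n \|x_n\|^2 \le 4K^2 < \infty$.

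The main obstacle is the first step — establishing $K<\infty$. The signed-sum reduction and the Rademacher averaging identity are routine; the content lies in the gliding-hump argument (equivalently, an application of the uniform boundedness principle to the multiplier operators $a\mapsto\sum_n a_n x_n$ on $\ell^\infty$), where one must arrange the blocks $F_k$ carefully so that their union genuinely breaks the Cauchy criterion of the (unconditionally convergent, hence subset-convergent) series.
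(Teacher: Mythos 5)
The paper does not prove this statement; it is quoted verbatim as Lemma 2.2 of the cited Gohberg--Krein reference, so there is no in-paper argument to compare against. Your proof is a correct, self-contained derivation of this classical Orlicz-type fact: the reduction to uniformly bounded subset sums, the passage to signed sums via $F^{+}$ and $F^{-}$, and the Rademacher averaging identity are all sound, and the diagonal/off-diagonal bookkeeping handles the complex case correctly. The only simplification worth noting is that the bound $K<\infty$ follows immediately from the Cauchy characterization of unconditional convergence (there is a finite $F_{0}$ with $\|\sum_{n\in F}x_{n}\|<1$ for all finite $F$ disjoint from $F_{0}$, so every finite subset sum is bounded by $1+\sum_{n\in F_{0}}\|x_{n}\|$), which renders the gliding-hump contradiction unnecessary, though your version is also valid.
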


\begin{cor}\label{classcor}
Under the hypothesis of Theorem \ref{Mthm}, the following are equivalent:
\begin{enumerate}
    \item $\{T^{n}g_{0}\}$ is a frame in $L^{2}(\mu)$
    \item $g_{0},\frac{1}{g_{0}}\in L^{\infty}(\mu).$
    \item $\{e^{2\pi i nx}\}_{n\in \mathbb{Z}}$ is a frame in $L^{2}(\mu)$.
    \item $g,\chi{\{x:g(x)>0\}}\frac{1}{g}\in L^{\infty}([0,1))$.
    \item For any bijection $\sigma: \mathbb{N}\to \mathbb{Z}$, $\sum_{n=0}^{\infty}\langle f, T^{\sigma(n)}g_{0}\rangle e^{2\pi i \sigma(n) x}$ exists in $L^{2}(\mu)$ for all $f\in L^{2}(\mu)$.
\end{enumerate}
\end{cor}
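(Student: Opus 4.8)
The plan is to collapse all five conditions onto the single statement $(2)$, that $g_{0},\frac{1}{g_{0}}\in L^{\infty}(\mu)$, and then feed $(5)$ into the resulting chain of equivalences. Throughout I will use the conclusions of Theorem \ref{Mthm}: the standing hypothesis forces $T=M_{e^{2\pi ix}}$, $\mu$ absolutely continuous with density $g$, and $g_{0}=\frac{1}{g}$ $\mu$-a.e.; consequently $T^{n}g_{0}=g_{0}e^{2\pi inx}$ and, since $|e^{2\pi inx}|\equiv 1$, the norms $\|T^{n}g_{0}\|_{\mu}=\|g_{0}\|_{\mu}$ and $\|e^{2\pi inx}\|_{\mu}=\mu([0,1))^{1/2}$ are independent of $n$. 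Also $\mu\neq 0$ (the ambient Hilbert space is infinite dimensional), so $g>0$ $\mu$-a.e. and $\|g_{0}\|_{\mu}>0$.

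For $(1)\iff(2)$ and $(3)\iff(2)$ I would simply observe that, by definition, a sequence is a frame precisely when it is simultaneously Bessel and a lower semi-frame. Theorem \ref{Besselthm} says $\{T^{n}g_{0}\}$ is Bessel $\iff g_{0}\in L^{\infty}(\mu)\iff\{e^{2\pi inx}\}$ is a lower semi-frame, and Theorem \ref{Lfthm} says $\{T^{n}g_{0}\}$ is a lower semi-frame $\iff \frac{1}{g_{0}}\in L^{\infty}(\mu)\iff\{e^{2\pi inx}\}$ is Bessel. Intersecting the two characterizations gives both ``$\{T^{n}g_{0}\}$ is a frame $\iff(2)$'' and ``$\{e^{2\pi inx}\}$ is a frame $\iff(2)$''. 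For $(2)\iff(4)$ I would carry out the elementary translation between $L^{\infty}(\mu)$ and $L^{\infty}([0,1))$: a set is $\mu$-null iff $g$ vanishes Lebesgue-a.e.\ on it, so $\frac{1}{g_{0}}=g\in L^{\infty}(\mu)$ is the same as $g\in L^{\infty}([0,1))$, while $g_{0}=\frac{1}{g}\in L^{\infty}(\mu)$ is the same as $g$ being bounded below off a Lebesgue-null subset of its support, i.e.\ $\chi_{\{g>0\}}\frac{1}{g}\in L^{\infty}([0,1))$. (One may instead route this through Theorem \ref{absc}, since $(4)$ is exactly the statement that $g$ is bounded above and below on its support.)

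It remains to insert $(5)$, which I would do by proving $(3)\Rightarrow(5)\Rightarrow(2)$. For $(3)\Rightarrow(5)$: when $\{e^{2\pi inx}\}_{n\in\mathbb{Z}}$ is a frame its frame operator is multiplication by the density $g$ (Section \ref{S4}), so its canonical dual frame is $\{M_{g}^{-1}e^{2\pi inx}\}=\{g_{0}e^{2\pi inx}\}=\{T^{n}g_{0}\}$; hence the frame reconstruction $f=\sum_{n\in\mathbb{Z}}\langle f,T^{n}g_{0}\rangle e^{2\pi inx}$ converges unconditionally, and unconditional convergence of a $\mathbb{Z}$-indexed family is precisely convergence of $\sum_{n}\langle f,T^{\sigma(n)}g_{0}\rangle e^{2\pi i\sigma(n)x}$ for every bijection $\sigma\colon\mathbb{N}\to\mathbb{Z}$, which is $(5)$. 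For $(5)\Rightarrow(2)$: $(5)$ says $\sum_{n\in\mathbb{Z}}\langle f,T^{n}g_{0}\rangle e^{2\pi inx}$ converges unconditionally for each $f$, so by Lemma \ref{Heil} the series $\sum_{n\in\mathbb{Z}}\langle f,e^{2\pi inx}\rangle T^{n}g_{0}$ also converges unconditionally for each $f$; Theorem \ref{Gohberg} applied to these two series, together with the constancy of $\|e^{2\pi inx}\|_{\mu}$ and $\|T^{n}g_{0}\|_{\mu}=\|g_{0}\|_{\mu}>0$, yields $\sum_{n}|\langle f,T^{n}g_{0}\rangle|^{2}<\infty$ and $\sum_{n}|\langle f,e^{2\pi inx}\rangle|^{2}<\infty$ for all $f$, i.e.\ both $\{T^{n}g_{0}\}$ and $\{e^{2\pi inx}\}$ are Bessel, whence Theorem \ref{Besselthm} and Theorem \ref{Lfthm} give $(2)$. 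The step I expect to require the most care is $(5)\Rightarrow(2)$: one must apply Lemma \ref{Heil} to swap the roles of the two sequences \emph{before} Gohberg's lemma can be used, and one must verify that $g_{0}$ is not $\mu$-a.e.\ zero so that the norm appearing in Gohberg's conclusion is a strictly positive constant rather than vacuous.
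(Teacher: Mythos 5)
Your proposal is correct and follows essentially the same route as the paper: conditions $(1)$--$(4)$ are collapsed via Theorems \ref{Lfthm}, \ref{Besselthm}, and \ref{absc} (your direct $L^{\infty}(\mu)$-versus-$L^{\infty}([0,1))$ translation for $(2)\iff(4)$ is an equivalent shortcut), and $(5)$ is handled exactly as in the paper by Lemma \ref{Heil} plus Theorem \ref{Gohberg} in one direction and unconditional convergence of a Bessel synthesis of $\ell^{2}$ coefficients (equivalently, of the canonical dual frame expansion) in the other. Your explicit attention to $\|T^{n}g_{0}\|_{\mu}$ being a positive constant before invoking Gohberg's lemma is a point the paper leaves implicit.
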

\begin{proof}
$(1)\iff (2) \iff (3)$ follows from Theorems \ref{Lfthm} and \ref{Besselthm}.

$(3)\iff (4)$ follows from Theorem \ref{absc}.

Assuming $(5)$, we get by Lemma \ref{Heil} and Theorem \ref{Gohberg} that for all $f\in L^{2}(\mu)$,
$$\sum_{n\in \mathbb{Z}}|\langle f, e^{2\pi i nx}\rangle|^{2}, \sum_{n\in \mathbb{Z}}|\langle f, g_{0}e^{2\pi i nx}\rangle|^{2}<\infty.$$
It follows that $\{T^{n}f_{0}\}$ and $\{e^{2\pi nx}\}$ are Bessel sequences in $L^{2}(\mu)$, so by Theorem \ref{Besselthm}, we have $(3)$.

Assume $(3)$. It is known that if $\{c_{n}\}\in \ell^{2}(\mathbb{Z})$ and $\{g_{n}\}_{n\in \mathbb{Z}}$ is a Bessel sequence in a Hilbert space $H$, then 
$\sum_{n}c_{n}g_{n}$ converges unconditionally, giving us $(3)\implies (5)$ by $(3)\iff (1)$.
\end{proof}

\section{Other classifying results}\label{S7}
In this section, we use previous results to establish more classification results for measures that possess Bessel sequences $\{g_{n}\}$ that are Poisson dextrodual to $\{e^{2\pi i nx}\}_{n\in \mathbb{Z}}$.
Our first result is that there are even absolutely continuous measures that don't possess these types of expansions:

\begin{prop}\label{abscontfail}
There exists an absolutely continuous measure $\mu$ on $[0,1)$ such that there is an $f\in L^{2}(\mu)$ where no $\{c_{n}\}\in \ell^{2}(\mathbb{Z})$ has 
$$f=\lim \sum_{n=-M}^{M}c_{n}e^{2\pi inx}$$ with convergence in $L^{2}(\mu)$.
\end{prop}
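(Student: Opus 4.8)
The plan is to exhibit a concrete absolutely continuous measure $\mu = g\,dx$ on $[0,1)$ whose density $g$ is bounded but whose (partial) inverse density fails the weak-$A_2$ estimate \eqref{Meq}, and then invoke the necessity direction of Theorem \ref{Mthm} to rule out any two-sided operator-orbit expansion; but since the claim is stronger -- it forbids \emph{arbitrary} $\ell^2$-coefficient symmetric Fourier expansions, not just operator-orbit ones -- I will instead argue directly via the weighted-norm theory. First I would take $g(x) = x^{\alpha}$ on $[0,1)$ for a suitable exponent $\alpha$, or more robustly take $g$ bounded with a rapidly oscillating sequence of intervals on which $g$ is tiny, engineered so that the weight $1/g \cdot \chi_{\{g>0\}}$ does \emph{not} satisfy the $A_2$ condition. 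By the converse direction of the Hunt--Muckenhoupt--Wheeden theorem (Theorem \ref{Hunthm}), failure of $A_2$ for $g$ means there is some $h$ with $\int_0^1 |h|^2 g\,dx < \infty$ for which $S_n(h) \not\to h$ in $L^2(g\,dx) = L^2(\mu)$.

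The key steps, in order, are: (i) choose $g$ explicitly -- e.g.\ $g = \sum_k 2^{-k}\chi_{I_k} + \chi_{[0,1)\setminus \bigcup I_k}$ for a disjoint family of intervals $I_k$ with $|I_k|$ chosen so that $\frac{1}{|J_k|}\int_{J_k} g \cdot \frac{1}{|J_k|}\int_{J_k} \frac1g \to \infty$ along a sequence of intervals $J_k \supseteq I_k$, which is easy to arrange while keeping $g \le 1$ and $g > 0$ everywhere; (ii) verify $\mu = g\,dx$ is a genuine finite absolutely continuous Borel measure on $[0,1)$; (iii) observe that an expansion $f = \lim_M \sum_{n=-M}^M c_n e^{2\pi i n x}$ in $L^2(\mu)$ with $\{c_n\} \in \ell^2(\mathbb{Z})$ forces, by pairing against $e^{2\pi i m x}\bar g$ (which lies in $L^1$ since $g \le 1$), that $c_n$ must equal the classical Fourier coefficient $\widehat{fg}(n)$, so the putative expansion is exactly the classical symmetric partial sum $S_M(fg)$ evaluated against the weight $g$; (iv) conclude that $\mu$ possessing such expansions for \emph{all} $f \in L^2(\mu)$ is equivalent to $S_M(h) \to h$ in $L^2(g\,dx)$ for all $h = fg$ ranging over $L^2(g\,dx)$ -- i.e.\ weighted-$L^2$ norm convergence of Fourier series with weight $g$; (v) apply the necessity half of Theorem \ref{Hunthm}: since $g \notin A_2$, this convergence fails, so some $f \in L^2(\mu)$ admits no such $\ell^2$ expansion.

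The main obstacle I anticipate is step (iii): pinning down that $\{c_n\}$ is \emph{forced} to be $\{\widehat{fg}(n)\}$ and that the only candidate expansion is the classical symmetric Fourier sum. The subtlety is that $L^2(\mu)$-convergence of $\sum c_n e^{2\pi i nx}$ need not imply pointwise or $L^2(dx)$ convergence, so one must extract the coefficients by testing against functions $\varphi$ with $\varphi g \in L^\infty$ (so that $\langle e^{2\pi i nx}, \varphi\rangle_\mu = \widehat{(\bar\varphi g)}(n)$ is a legitimate Fourier coefficient) and using density of such $\varphi$ together with the $\ell^2$ bound on $\{c_n\}$ to identify the analysis-side coefficients; once $c_n = \widehat{fg}(n)$ is established, the reduction to Theorem \ref{Hunthm} is routine. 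A secondary (purely bookkeeping) point is making the interval construction in step (i) concrete enough that the weak-$A_2$ quotient genuinely blows up while $g$ stays bounded and strictly positive; the simplest safe choice is to let the density approach $0$ only on a null set's worth of shrinking intervals but with the averaging windows chosen just slightly larger, which makes the computation a one-line geometric-series estimate. Alternatively, if one is content to cite Theorem \ref{Mthm} and Corollary \ref{classcor} directly, one can shortcut by noting that any $g$ with $g \notin L^\infty$ \emph{or} $\chi_{\{g>0\}}/g \notin L^\infty$ already fails to give a \emph{frame} expansion -- but to kill \emph{all} $\ell^2$ expansions (not just frames) one genuinely needs the Hunt--Muckenhoupt--Wheeden failure, so the weighted-norm route is the honest one.
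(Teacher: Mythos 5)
Your route is genuinely different from the paper's and is viable, but one step is stated incorrectly. The paper does not construct a non-$A_2$ weight and does not invoke Theorem \ref{Hunthm} at all: it starts from a bounded $f$ whose classical partial sums $S_M(f)$ are unbounded at a point (so $\|S_M(f)\|_\infty\to\infty$), uses the uniform boundedness principle on $L^1$ to produce a weight $g$ with $\int_0^1|S_M(f)|^2|g|\,dx$ unbounded, and takes $d\mu=(1+|g|)\,dx$. Because the density is bounded \emph{below} by $1$, multiplication by $(1+|g|)^{-1/2}$ is bounded on $L^2(\mu)$, so convergence of $\sum c_ne^{2\pi inx}$ in $L^2(\mu)$ transfers to an expansion in the orthonormal basis $\{e^{2\pi inx}/\sqrt{1+|g|}\}$ and forces $c_n=\hat f(n)$; the blow-up of $\int|S_M(f)|^2(1+|g|)\,dx$ then kills convergence. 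This is more elementary and self-contained than your appeal to the necessity half of Hunt--Muckenhoupt--Wheeden, though your approach has the merit of tying the example directly to the $A_2$ machinery already used in Sections 4--6.

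The concrete error is in your steps (iii)--(iv): the coefficients are forced to be $c_n=\hat f(n)$, the classical Fourier coefficients of $f$ itself, not $\widehat{fg}(n)$. Indeed, if $\sum_{|n|\le M}c_ne^{2\pi inx}\to f$ in $L^2(g\,dx)$ and you pair against $\varphi_m=e^{2\pi imx}/g$ (which lies in $L^2(g\,dx)$ provided $1/g\in L^1$ -- a condition you must build into the construction in step (i), e.g.\ $|I_k|=4^{-k}$ with $g=2^{-k}$ on $I_k$), then $\langle\sum_{|n|\le M}c_ne^{2\pi inx},\varphi_m\rangle_{g\,dx}=c_m$ for $M\ge|m|$, while the limit is $\int f e^{-2\pi imx}\,dx=\hat f(m)$. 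Consequently the reduction in (iv) is to ``$S_M(f)\to f$ in $L^2(g\,dx)$ for all $f\in L^2(g\,dx)$,'' which is \emph{exactly} the statement of Theorem \ref{Hunthm} with weight $g$; your version ``$S_M(h)\to h$ for $h=fg$'' is a statement about the wrong function class ($h$ ranges over $L^2(g^{-1}dx)$, not $L^2(g\,dx)$) and is not what the theorem gives you. With $c_n=\hat f(n)$ and $1/g\in L^1$ pinned down, the rest of your argument (non-$A_2$ construction, necessity half of Theorem \ref{Hunthm}, contradiction with any putative $\ell^2$ expansion) goes through; note also that each partial-sum operator is individually bounded on $L^2(g\,dx)$ precisely because $g$ and $1/g$ are both integrable, which is what legitimizes the uniform-boundedness step hiding inside the necessity direction.
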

\begin{proof}
Let $f$ be a bounded Borel function on $[0,1)$ such that
$$\limsup_{M\to \infty}|S_{M}(f)(0)|=\infty$$ where $S_{M}(f)$ denotes the classic Fourier sequence of $f$. There is a $g\in L^{1}([0,1))$ where the sequence
$\int_{0}^{1}|S_{M}(f)|^{2}gdx$ is unbounded. Otherwise by Principle of uniform boundedness, we would have
$$sup_{n}\|S_{M}(f)\|_{\infty}<\infty.$$

Let $\mu$ be absolutely continuous with Radon-Nikodym derivative $|g|+1$. We claim $f$ can't be expressed as a Fourier series in $L^{2}(\mu)$. Note that $\{\frac{e^{2\pi i nx}}{\sqrt{1+|g|}}\}_{n\in \mathbb{Z}}$ is an orthonormal basis of $L^{2}(\mu)$.
Then if $$f=\lim \sum_{n=-M}^{M}c_{n}e^{2\pi i nx}\implies \frac{f}{\sqrt{1+|g|}}=\lim \sum_{n=-M}^{M}c_{n}\frac{e^{2\pi i nx}}{\sqrt{1+|g|}}$$
$$\implies c_{n}=\langle \frac{f}{\sqrt{1+|g|}}, \frac{e^{2\pi inx}}{\sqrt{1+|g|}}\rangle_{\mu}=\langle f, e^{2\pi inx}\rangle_{L^{2}([0,1))}$$ for all $n$ by uniqueness of coefficients from an orthonormal basis.
Then it must be that $\int_{0}^{1}|S_{M}(f)|^{2}(1+|g|)dx$ is a bounded sequence, which is a contradiction.
\end{proof}
Our next classification result follows from a couple of results from \cite{Berner2024Frame} and Theorem \ref{Hunthm}. We state the results from \cite{Berner2024Frame} here:

\begin{prop}[Proposition 3.2 \cite{Berner2024Frame}]\label{Lebesguepart}
    Let $\mu=\mu_{a}+\mu_{s}$ where $\mu_{s}$ is a non-zero finite singular Borel measure on $[0,1)$ and $\mu_{a}$ is an absolutely continuous Borel measure on $[0,1)$ with Radon-Nikodym derivative that is bounded below. For $S$ that is a Borel set of Lebesgue measure zero where $\mu_{s}(S^{c})=0$, there is no $\{c_{n}\}_{n=-\infty}^{\infty}$ such that
    $$\sum_{n}c_{n}e^{2\pi i nx}=\chi_{S}$$ with convergence in $L^{2}(\mu)$.
\end{prop}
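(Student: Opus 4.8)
The plan is to argue by contradiction. Suppose that some scalar sequence $\{c_n\}_{n\in\mathbb{Z}}$ gives
$P_M := \sum_{n=-M}^{M} c_n e^{2\pi i n x} \to \chi_S$ in $L^2(\mu)$. First I would peel off the absolutely continuous part: since $\mu = \mu_a + \mu_s$ with $\mu_s \ge 0$, we have $\|f\|_{L^2(\mu_a)} \le \|f\|_{L^2(\mu)}$ for every $f$, so $P_M \to \chi_S$ in $L^2(\mu_a)$ as well. Writing $g_a := d\mu_a/dx$ and using the hypothesis that $g_a \ge \delta$ for some $\delta > 0$, one gets $\|f\|_{L^2(dx)}^2 \le \delta^{-1}\|f\|_{L^2(\mu_a)}^2$, so the partial sums $P_M$ also converge in $L^2([0,1),dx)$. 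Since $S$ is Lebesgue-null, $\chi_S = 0$ in $L^2(dx)$, and therefore $P_M \to 0$ in $L^2([0,1),dx)$.

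Next I would invoke orthonormality of $\{e^{2\pi i n x}\}_{n\in\mathbb{Z}}$ in $L^2([0,1),dx)$: for each finite $M$ this gives $\|P_M\|_{L^2(dx)}^2 = \sum_{|n|\le M}|c_n|^2$, which is a non-decreasing sequence in $M$; by the previous step it tends to $0$, and a non-decreasing nonnegative sequence with limit $0$ is identically $0$, so $c_n = 0$ for every $n$ (note this needs no a priori $\ell^2$ assumption on $\{c_n\}$). Hence $P_M \equiv 0$, and passing to the limit in $L^2(\mu)$ forces $\chi_S = 0$ in $L^2(\mu)$, i.e. $\mu(S) = 0$. This contradicts $\mu(S) \ge \mu_s(S) = \mu_s([0,1)) > 0$, where the equality uses $\mu_s(S^c)=0$ and positivity uses that $\mu_s$ is non-zero.

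I do not expect a genuine obstacle; the argument is short and elementary. The one step carrying real weight is the transfer of convergence from $L^2(\mu_a)$ to $L^2(dx)$, which is exactly where "Radon--Nikodym derivative bounded below" is used, while the role of $\mu_s$ being non-zero and concentrated on $S$ is only to furnish the final contradiction. Conceptually, the point is the incompatibility of the two components of $\mu$: a trigonometric series cannot converge to $0$ in $L^2(dx)$ while converging to $\chi_S = \mathbb{1}$ ($\mu_s$-a.e.) in $L^2(\mu_s)$ unless it is the trivial series, and the trivial series cannot represent a function that is $1$ on a set of positive $\mu$-measure.
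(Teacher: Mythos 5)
Your argument is correct. Note that the paper only quotes this proposition from \cite{Berner2024Frame} and reproduces no proof of its own, so there is nothing internal to compare against; your route --- pushing the $L^{2}(\mu)$ convergence down to $L^{2}(\mu_{a})$ and then to $L^{2}(dx)$ via the positive lower bound on $d\mu_{a}/dx$, concluding $c_{n}=0$ for all $n$ from orthonormality of the exponentials, and contradicting $\mu(S)\geq \mu_{s}(S)=\mu_{s}([0,1))>0$ --- is the natural one and is complete as written.
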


\begin{thm}[Theorem 3.5 \cite{Berner2024Frame}]\label{oldthm}
Let $\mu$ be a finite Borel measure on $[0,1)$ with Radon-Nikodym  derivative $g$. Suppose that there is a Bessel sequence $\{g_{n}\}$ with bound $B$ that is Poisson dextrodual to $\{e^{2\pi i nx}\}_{n=-\infty}^{\infty}$ in $L^{2}(\mu)$, then $$g(x)>\frac{1}{3B}$$ almost everywhere on its support.
\end{thm}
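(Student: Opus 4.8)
The plan is to argue by contradiction: if $g$ were too small on a positive-measure subset of its support, the Poisson dextrodual expansion of the indicator of that set would be forced to be ``too large'' in $L^{2}$ of Lebesgue measure. So suppose, for some $\lambda<\tfrac{1}{3B}$, the set $\{0<g\le\lambda\}$ has positive Lebesgue measure. First I would shrink it to avoid the singular part: let $N$ be a Lebesgue-null Borel set carrying the singular part $\mu_{s}$ of $\mu$, and put $E:=\{0<g\le\lambda\}\setminus N$, so that $|E|>0$, $E\subseteq\{g>0\}$, and $\mu(E)=\int_{E}g\,dx\le\lambda|E|$. Applying Poisson dextroduality to $f=\chi_{E}$ and writing $c_{n}:=\langle\chi_{E},g_{n}\rangle_{\mu}$, the Bessel bound of $\{g_{n}\}$ gives $\sum_{n\in\mathbb{Z}}|c_{n}|^{2}\le B\,\mu(E)\le B\lambda|E|$, while $\sum_{n=-k}^{k}c_{n}e^{2\pi inx}\to\chi_{E}$ in $L^{2}(\mu)$.

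Next I would transfer this to ordinary Lebesgue measure. Since $\{c_{n}\}\in\ell^{2}(\mathbb{Z})$, the classical Fourier series $h:=\sum_{n}c_{n}e^{2\pi inx}$ defines an element of $L^{2}([0,1),dx)$ with $\|h\|_{L^{2}(dx)}^{2}=\sum_{n}|c_{n}|^{2}$, and its symmetric partial sums converge to $h$ in $L^{2}(dx)$. Because $g\,dx\le\mu$ as measures, the convergence $\sum_{n=-k}^{k}c_{n}e^{2\pi inx}\to\chi_{E}$ in $L^{2}(\mu)$ also holds in $L^{2}(g\,dx)$. Now a subsequence-chasing step: pick $k_{j}$ along which the symmetric partial sums converge to $h$ almost everywhere $[dx]$, then a further subsequence along which they also converge to $\chi_{E}$ almost everywhere $[g\,dx]$, hence almost everywhere $[dx]$ on $\{g>0\}$. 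Thus $h=\chi_{E}$ a.e.\ $[dx]$ on $\{g>0\}$, and in particular $h=1$ a.e.\ $[dx]$ on $E$. Consequently $|E|=\int_{E}|h|^{2}\,dx\le\|h\|_{L^{2}(dx)}^{2}=\sum_{n}|c_{n}|^{2}\le B\lambda|E|$, forcing $B\lambda\ge 1$, which contradicts $\lambda<\tfrac{1}{B}$, a fortiori $\lambda<\tfrac{1}{3B}$. Hence $g>\tfrac{1}{3B}$ almost everywhere on its support; in fact this argument yields the sharper bound $g\ge\tfrac{1}{B}$.

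The main obstacle to watch is the passage between $L^{2}(\mu)$-convergence of the symmetric partial sums and honest Lebesgue-a.e.\ information: one cannot work directly in $L^{2}(\mu)$, because $\{e^{2\pi inx}\}$ need not be a Bessel sequence there and the partial sums have no obvious closed form. The $\ell^{2}$-summability of the coefficients---which is exactly what the Bessel bound of $\{g_{n}\}$ buys us---is what lets us introduce the auxiliary function $h$ living in $L^{2}(dx)$ and identify it with $\chi_{E}$ through matched a.e.\ subsequential limits; once that identification is in place, the norm comparison $\int_{E}|h|^{2}\,dx\le\|h\|_{L^{2}(dx)}^{2}$ against the Bessel estimate closes the argument. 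Care is also needed in the first step to delete a carrier of the singular part, so that $\mu(E)$ is controlled purely by $\int_{E}g\,dx$.
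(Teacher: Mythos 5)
Your argument is correct. Note that the paper does not prove Theorem \ref{oldthm} at all --- it is imported verbatim as Theorem 3.5 of \cite{Berner2024Frame} --- so there is no in-paper proof to match against; the closest internal analogue is the $(3)\Rightarrow(1)$ step of Theorem \ref{Besselthm}, which attacks a similar lower bound by decomposing the bad set into level sets $E_{k}$ on which $g$ is pinned between consecutive values of a sequence, and it is exactly that kind of decomposition that produces the factor $3$ in the constant $\tfrac{1}{3B}$. Your route is different and cleaner: you use the Bessel bound only to get $\{c_{n}\}=\{\langle \chi_{E},g_{n}\rangle_{\mu}\}\in\ell^{2}$ with $\sum|c_{n}|^{2}\le B\mu(E)\le B\lambda|E|$, introduce the genuine $L^{2}(dx)$ function $h=\sum c_{n}e^{2\pi i nx}$, and identify $h$ with $\chi_{E}$ on $\{g>0\}$ by matching a.e.\ subsequential limits in $L^{2}(dx)$ and $L^{2}(g\,dx)$ (the comparison $g\,dx\le\mu$ and the excision of a carrier of $\mu_{s}$ are exactly the points that need care, and you handle both). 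The payoff $|E|=\int_{E}|h|^{2}\,dx\le\sum|c_{n}|^{2}\le B\lambda|E|$ then forces $\lambda\ge\tfrac{1}{B}$, so your argument in fact yields the sharper conclusion $g\ge\tfrac{1}{B}$ a.e.\ on $\{g>0\}$, which subsumes the stated $g>\tfrac{1}{3B}$. One cosmetic remark: to conclude the strict inequality of the theorem you should run the contradiction at $\lambda=\tfrac{1}{3B}$ itself (which your argument permits, since $\tfrac{1}{3B}<\tfrac{1}{B}$) rather than only for $\lambda<\tfrac{1}{3B}$; as written the union over $\lambda<\tfrac{1}{3B}$ only gives $g\ge\tfrac{1}{3B}$, but your own sharper bound $g\ge\tfrac{1}{B}$ already closes this gap.
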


\begin{thm}
Let $\mu$ be a finite Borel measure on $[0,1)$ with Radon-Nikodym derivative $g$.
Suppose there is a Bessel sequence $\{g_{n}\}$ that is Poisson dextrodual to $\{e^{2\pi i nx}\}_{n\in \mathbb{Z}}$ in $L^{2}(\mu)$. Then the following are equivalent:
\begin{enumerate}
    \item $\mu$ is absolutely continuous, $\frac{1}{g}\in L^{\infty}([0,1))$, and $g$ satisfies the $A_{2}$ condition.
    \item $g>0$ almost everywhere.
    \item $\{g_{n}\}$ has a biorthogonal sequence.
    \item $\{e^{2\pi i nx}\}_{n\in \mathbb{Z}}$ has a biorthogonal sequence.
\end{enumerate}
\end{thm}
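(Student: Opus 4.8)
The plan is to close the cycle of implications $(1)\Rightarrow(2)$, $(2)\Rightarrow(1)$, $(1)\Rightarrow(3)$, $(3)\Rightarrow(2)$, $(1)\Rightarrow(4)$, $(4)\Rightarrow(2)$. The implication $(1)\Rightarrow(2)$ is immediate: if $1/g\in L^{\infty}([0,1))$ then $g$ is bounded below by a positive constant, so $g>0$ almost everywhere. The crux of $(3)\Rightarrow(2)$ and $(4)\Rightarrow(2)$ is that a biorthogonal sequence, if it exists, must coincide with the ``other'' system. Concretely, if $\{c_{n}\}$ is biorthogonal to $\{g_{n}\}$, then substituting $x=c_{m}$ into the Poisson dextrodual expansion $x=\lim_{k}\sum_{n=-k}^{k}\langle x,g_{n}\rangle_{\mu}e^{2\pi inx}$ and using $\langle c_{m},g_{n}\rangle_{\mu}=\delta_{nm}$ shows the partial sums stabilize at $e^{2\pi imx}$, so $c_{m}=e^{2\pi imx}$ in $L^{2}(\mu)$ and hence $\langle e^{2\pi imx},g_{n}\rangle_{\mu}=\delta_{mn}$ for all $m,n$; symmetrically, if $\{b_{n}\}$ is biorthogonal to $\{e^{2\pi inx}\}$, pairing the Poisson dextrodual expansion with $b_{m}$ gives $\langle x,b_{m}\rangle_{\mu}=\langle x,g_{m}\rangle_{\mu}$ for all $x$, so $b_{m}=g_{m}$ and again $\langle e^{2\pi imx},g_{n}\rangle_{\mu}=\delta_{mn}$. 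Thus $(3)$ and $(4)$ both reduce to this single biorthogonality relation.

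From $\langle e^{2\pi imx},g_{n}\rangle_{\mu}=\delta_{mn}$ I would argue as follows. This relation says the finite complex measure $\overline{g_{n}}\,d\mu$ has the same Fourier coefficients as $e^{-2\pi inx}\,dx$, so by uniqueness of Fourier coefficients of measures on the circle $\overline{g_{n}}\,d\mu=e^{-2\pi inx}\,dx$. Writing $d\mu=g\,dx+d\mu_{s}$ and matching absolutely continuous with singular parts yields $\overline{g_{n}}\,g=e^{-2\pi inx}$ Lebesgue-a.e.\ and $\overline{g_{n}}\,d\mu_{s}=0$; since $|e^{-2\pi inx}|\equiv 1$, the first identity forces $g>0$ almost everywhere, which is $(2)$. (The second identity additionally shows every $g_{n}$ vanishes $\mu_{s}$-a.e., while Poisson dextroduality makes $\{g_{n}\}$ complete in $L^{2}(\mu)$, so in fact $\mu_{s}=0$; together with the Bessel bound this already gives $(1)$, but it is cleaner to route through $(2)\Rightarrow(1)$.)

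For $(2)\Rightarrow(1)$: let $B$ be a Bessel bound for $\{g_{n}\}$. Theorem \ref{oldthm} gives $g\geq 1/(3B)$ a.e.\ on $\{g>0\}$, which under $(2)$ has full measure, so $g\geq 1/(3B)$ a.e.\ and $1/g\in L^{\infty}$. If the singular part $\mu_{s}$ were nonzero, picking a Lebesgue-null carrier $S$ of $\mu_{s}$, the Poisson dextrodual expansion of $\chi_{S}$ would exhibit $\chi_{S}$ as an $L^{2}(\mu)$-convergent series of exponentials, contradicting Proposition \ref{Lebesguepart} (whose hypotheses now hold, as $g$ is bounded below); hence $\mu$ is absolutely continuous and $d\mu=g\,dx$. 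Since $g$ is bounded below, $L^{2}(g\,dx)$ embeds continuously into $L^{2}(dx)$, so the Poisson dextrodual expansion of any $f\in L^{2}(\mu)$ also converges in $L^{2}(dx)$; as $\{e^{2\pi inx}\}$ is an orthonormal basis of $L^{2}(dx)$, its coefficients are forced to be $\langle f,g_{n}\rangle_{\mu}=\widehat{f}(n)$, whence $g_{n}=e^{2\pi inx}/g$ and $\sum_{n=-k}^{k}\widehat{f}(n)e^{2\pi inx}\to f$ in $L^{2}(g\,dx)$ for every $f\in L^{2}(g\,dx)$. Theorem \ref{Hunthm} then gives that $g$ satisfies the $A_{2}$ condition, so $(1)$ holds. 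Finally $(1)\Rightarrow(3)$ and $(1)\Rightarrow(4)$ follow from the formula $g_{n}=e^{2\pi inx}/g$ (which holds once $(1)$, hence $(2)$, is assumed): $\{e^{2\pi inx}\}\subseteq L^{2}(\mu)$ (as $\mu$ is finite) is biorthogonal to $\{g_{n}\}$, and $\{g_{n}\}=\{e^{2\pi inx}/g\}\subseteq L^{2}(\mu)$ (as $1/g\in L^{\infty}$) is biorthogonal to $\{e^{2\pi inx}\}$, both by $\int_{0}^{1}e^{2\pi i(m-n)x}\,dx=\delta_{mn}$. I expect the main obstacle to be the passage from the purely abstract hypotheses $(2)$, $(3)$, $(4)$ to the measure-theoretic conclusion in $(1)$ --- in particular correctly using Poisson dextroduality to pin down $g_{n}=e^{2\pi inx}/g$, then extracting absolute continuity via Proposition \ref{Lebesguepart} and the $A_{2}$ condition via Theorem \ref{Hunthm}; the convergence bookkeeping (symmetric partial sums, and the generally non-Bessel behaviour of $\{e^{2\pi inx}\}$) needs care.
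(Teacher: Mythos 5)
Your proof is correct and follows essentially the same route as the paper: the same key ingredients (Theorem \ref{oldthm} and Proposition \ref{Lebesguepart} to get $1/g\in L^{\infty}$ and absolute continuity, Theorem \ref{Hunthm} for the $A_{2}$ condition, Poisson dextroduality to pin the biorthogonal sequences down to $\{e^{2\pi inx}\}$ and $\{g_{n}\}$ respectively, and Fourier--Stieltjes uniqueness to force $\overline{g_{n}}g=e^{-2\pi inx}$ and hence $g>0$ a.e.). The only difference is the cosmetic rearrangement of the implication cycle, with your treatment of the Lebesgue decomposition in $(4)\Rightarrow(2)$ being, if anything, slightly more careful than the paper's.
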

\begin{proof}
$(1)\implies (2)$

Since $g$ satisfies the $A_{2}$ condition, $\frac{1}{g}<\infty$ almost everywhere.

$(2)\implies (1)$

By Theorem \ref{oldthm} and Proposition \ref{Lebesguepart}, $\mu$ is absolutely continuous and $\frac{1}{g}\in L^{\infty}([0,1))$. Furthermore since $\{\frac{e^{2\pi i nx}}{\sqrt{g}}\}_{n\in \mathbb{Z}}$ is an orthonormal basis in $L^{2}(\mu)$, by the proof of Proposition \ref{abscontfail}, it must be that
$$\langle f, g_{n}\rangle_{\mu}=\langle f, e^{2\pi i nx}\rangle_{L^{2}([0,1))}$$ for all $n\in \mathbb{Z}$. It follows that $g$ satisfies the $A_{2}$ condition by Theorem \ref{Hunthm}.

$(2)\implies (3)$

Continuing the argument $(2)\implies (1)$, we see that $\{e^{2\pi i nx}\}_{n\in \mathbb{Z}}$ is biorthogonal to $\{g_{n}\}$ in $L^{2}(\mu)$.

$(3)\implies (4)$

Let $\{g_{n}'\}$ be the biorthogonal sequence of $\{g_{n}\}$. By the Poisson dextrodual property, for each $k$,
$$g_{k}'=e^{2\pi i kx}.$$

$(4)\implies (2)$

Suppose for sake of contradiction that $g(x)=0$ for $x\in S$ where $|S|>0$.
Let $\{e_{n}'\}$ be the biorthogonal sequence of $\{e^{2\pi i nx}\}_{n\in \mathbb{Z}}$. Then for each $k$, the complex measure $e_{k}'\mu$ has the same Fourier coefficients as the complex measure $e^{2\pi i kx}dx$. Then it must be that  $e_{k}'\mu$ is absolutely continuous and $$e_{k}'(x)g(x)=e^{2\pi i kx}$$ for almost every $x$.
Therefore, for almost every $x\in S$, $$e^{2\pi i kx}=0,$$ which is a contradiction. 

\end{proof}

Finally, we show that if a measure is not equivalent to Lebesgue measure, and it possesses a frame that is Poisson dextrodual to the exponentials, then that frame has infinite excess. 
\begin{defn}
Recall a frame $\{g_{n}\}_{n}$ in Hilbert space $H$ has \textbf{finite excess} if there is a finite set $F$ such that $\{g_{n}\}_{n}-\{g_{n}\}_{n\in F}$ is not a frame.
\end{defn}
To prove this result, we use a result from Bakic and Beric about frame excess \cite{Bakic2015Excesses}. For more about frame excess and dual frames, see \cite{Christensen2016Frames}.

\begin{thm}[Theorem 2.2 \cite{Bakic2015Excesses}]\label{Bakicthm}
Let $\{x_{n}\}$ and $\{y_{n}\}$ be frames in a Hilbert space that are dual to each other. Then $\{x_{n}\}$ and $\{y_{n}\}$ have the same amount of excess.
\end{thm}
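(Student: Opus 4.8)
The plan is to reduce the statement to the purely operator-theoretic fact that the synthesis operators of two dual frames have kernels of equal dimension, and then to exhibit an explicit bounded isomorphism between those kernels. Recall the standard fact that the excess of a frame $\{x_{n}\}$ equals $\dim\ker T_{x}^{*}$, where $T_{x}\colon H\to\ell^{2}$ is the analysis operator $f\mapsto\{\langle f,x_{n}\rangle\}$ and its adjoint $T_{x}^{*}\colon\{c_{n}\}\mapsto\sum_{n}c_{n}x_{n}$ is the synthesis operator; indeed $\ker T_{x}^{*}$ records exactly the dependencies among the frame vectors, and one may delete a subfamily while retaining a frame precisely up to this dimension. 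Writing $T=T_{x}$ and $U=T_{y}$ for the analysis operators of $\{x_{n}\}$ and $\{y_{n}\}$, the duality relations $f=\sum_{n}\langle f,y_{n}\rangle x_{n}=\sum_{n}\langle f,x_{n}\rangle y_{n}$ become the operator identities $T^{*}U=I_{H}$ and $U^{*}T=I_{H}$. Thus the theorem is equivalent to showing $\dim\ker T^{*}=\dim\ker U^{*}$.

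Next I would introduce the two idempotents $Q=UT^{*}$ and $P=TU^{*}$ on $\ell^{2}$. Using $T^{*}U=I_{H}$ one checks $Q^{2}=U(T^{*}U)T^{*}=Q$ and $P^{2}=T(U^{*}T)U^{*}=P$, so both are bounded (not necessarily self-adjoint) projections. Since $\{x_{n}\}$ and $\{y_{n}\}$ are frames, $T$ and $U$ are injective with closed range and their frame operators $T^{*}T$, $U^{*}U$ are invertible. A direct computation then gives $\ker P=\ker U^{*}$, $\mathrm{ran}\,P=\mathrm{ran}\,T$, and dually $\ker Q=\ker T^{*}$, $\mathrm{ran}\,Q=\mathrm{ran}\,U$; for instance $PT=TU^{*}T=T$ gives $\mathrm{ran}\,T\subseteq\mathrm{ran}\,P$, while injectivity of $T$ gives $\ker P=\ker U^{*}$. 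In particular $\ell^{2}=\ker U^{*}\oplus\mathrm{ran}\,T$ is a topological direct sum, and $\ker T^{*}$, $\ker U^{*}$ are closed subspaces.

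Finally I would produce the isomorphism $\Psi:=(I-P)\big|_{\ker T^{*}}\colon\ker T^{*}\to\ker U^{*}$, which is bounded and lands in $\ker U^{*}=\mathrm{ran}(I-P)$. For injectivity, if $c\in\ker T^{*}$ with $\Psi c=0$ then $c\in\ker(I-P)=\mathrm{ran}\,T$, so $c=Tf$ with $T^{*}Tf=T^{*}c=0$; invertibility of $T^{*}T$ forces $f=0$, hence $c=0$. For surjectivity, given $d\in\ker U^{*}$ set $c=d-T(T^{*}T)^{-1}T^{*}d$; then $T^{*}c=T^{*}d-T^{*}d=0$ so $c\in\ker T^{*}$, and since $T(T^{*}T)^{-1}T^{*}d\in\mathrm{ran}\,T=\ker(I-P)$ while $(I-P)d=d$, we get $\Psi c=d$. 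Thus $\Psi$ is a continuous linear bijection between the closed subspaces $\ker T^{*}$ and $\ker U^{*}$, hence a Banach-space isomorphism by the open mapping theorem, so these kernels have the same dimension; by the excess characterization, $\{x_{n}\}$ and $\{y_{n}\}$ have the same excess.

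I expect the main obstacle to be handling the possibility of infinite excess so that ``same amount of excess'' is read correctly as an equality of cardinals. This is exactly what forces the argument to pass through an honest isomorphism rather than a formal basis matching: the bijection $\Psi$ must be upgraded to a bounded map with bounded inverse, which is legitimate here because $\ker T^{*}$ and $\ker U^{*}$ are closed and the open mapping theorem applies. A secondary subtlety is pinning down the reduction to $\dim\ker$ of the synthesis operator, since the equivalence between ``removable while remaining complete'' and ``removable while remaining a frame'' (both giving the same supremum) is what makes the kernel dimension the correct invariant for the paper's notion of excess.
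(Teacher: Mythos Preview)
The paper does not supply its own proof of this statement: Theorem~\ref{Bakicthm} is quoted verbatim from \cite{Bakic2015Excesses} and used as a black box in the proof of Theorem~\ref{excess}. So there is no in-paper argument to compare your proposal against.

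That said, your argument is correct and essentially the one in the cited source. The reduction of excess to $\dim\ker$ of the synthesis operator is standard, the idempotents $P=TU^{*}$ and $Q=UT^{*}$ are the right objects, and your map $\Psi=(I-P)\big|_{\ker T^{*}}$ is checked carefully for injectivity and surjectivity. The use of the open mapping theorem to upgrade the bijection to a Banach isomorphism (and hence to conclude equality of possibly infinite cardinal dimensions) is exactly the care that the infinite-excess case requires. One small remark: you could shorten the surjectivity step by observing that $P^{*}=UT^{*}=Q$, so $(I-P)^{*}=I-Q$ maps $\ker U^{*}$ to $\ker T^{*}$ by the symmetric argument, and the two maps are mutual inverses between the kernels; but your explicit preimage via the orthogonal projection $T(T^{*}T)^{-1}T^{*}$ works just as well.
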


\begin{thm}\label{excess}
Let $\mu$ be a finite Borel measure on $[0,1)$ such that there exists a frame with finite excess $\{g_{n}\}$ that is Poisson dextrodual to $\{e^{2\pi i nx}\}_{n\in \mathbb{Z}}$ in $L^{2}(\mu)$.
Then $\mu$ is absolutely continuous with Radon-Nikodym derivative bounded above and below.
\end{thm}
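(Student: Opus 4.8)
The plan is to force $\{e^{2\pi i nx}\}_{n\in\mathbb{Z}}$ to be a frame for $L^{2}(\mu)$; once that is done the conclusion is immediate from the characterization of measures admitting an exponential frame recalled at the opening of Section \ref{S4} (work of He, Lai, and Lau, \cite{Lai2011Fourier},\cite{He2013Exponential}, cf.\ Theorem \ref{absc}). Note that a Poisson dextrodual orbit does \emph{not} in general make the exponentials even a Bessel sequence --- the weight $1/\sqrt{x}$ from Corollary \ref{suffcase} is a counterexample --- so the finite--excess hypothesis must do the work, and the relevant consequence of it is that a frame of finite excess becomes a Riesz basis after deleting finitely many of its vectors.

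First I would delete a maximal finite set $F\subseteq\mathbb{Z}$ for which $\{g_{n}\}_{n\in E}$, $E:=\mathbb{Z}\setminus F$, is still a frame; by maximality this subfamily is exact, hence a Riesz basis for $L^{2}(\mu)$, and I let $\{\tilde g_{n}\}_{n\in E}$ be its biorthogonal Riesz basis in $L^{2}(\mu)$. The crucial computation is to substitute $x=\tilde g_{m}$ ($m\in E$) into the defining relation $\sum_{|n|\le k}\langle x,g_{n}\rangle_{\mu}e^{2\pi i nx}\to x$: the coefficient $\langle\tilde g_{m},g_{n}\rangle_{\mu}$ equals $\delta_{mn}$ when $n\in E$ and equals some scalar $c_{m,n}$ when $n\in F$, so for all large $k$ the partial sum is the fixed function $e^{2\pi i mx}+\sum_{n\in F}c_{m,n}e^{2\pi i nx}$, and letting $k\to\infty$ yields the identity $e^{2\pi i mx}=\tilde g_{m}-\sum_{n\in F}c_{m,n}e^{2\pi i nx}$ in $L^{2}(\mu)$ for every $m\in E$.

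From this identity I would read off Bessel--ness of the exponentials. Since $\{\tilde g_{m}\}_{m\in E}$ is a Riesz basis it is Bessel, and applying its Bessel inequality to each $g_{n}$ gives $\sum_{m\in E}|c_{m,n}|^{2}<\infty$ for each of the finitely many $n\in F$; hence $\{\sum_{n\in F}c_{m,n}e^{2\pi i nx}\}_{m\in E}$ is a Bessel sequence, and since a termwise difference of Bessel sequences is Bessel, so is $\{e^{2\pi i nx}\}_{n\in E}$, and then so is $\{e^{2\pi i nx}\}_{n\in\mathbb{Z}}$ after re-adjoining the $|F|$ missing exponentials. Now the Poisson dextrodual relation upgrades: for any $f$ one has $\{\langle f,g_{n}\rangle_{\mu}\}\in\ell^{2}$, so $\sum_{n}\langle f,g_{n}\rangle_{\mu}e^{2\pi i nx}$ converges unconditionally (Bessel synthesis) and its symmetric limit $f$ is therefore its unconditional sum; thus $\sum_{n}\langle f,g_{n}\rangle_{\mu}e^{2\pi i nx}=f$ for all $f\in L^{2}(\mu)$. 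Consequently $\{e^{2\pi i nx}\}_{n\in\mathbb{Z}}$ is a dual frame of the frame $\{g_{n}\}$, in particular a frame for $L^{2}(\mu)$ (this also squares with the hypothesis: by Theorem \ref{Bakicthm} it then has the same finite excess as $\{g_{n}\}$), and invoking Theorem \ref{absc} together with the remarks opening Section \ref{S4} finishes the proof.

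The step I expect to require the most care is exactly the one producing Bessel--ness of the exponentials: justifying cleanly that deleting $F$ leaves a Riesz basis with biorthogonal system inside $L^{2}(\mu)$, and that the scalars $c_{m,n}$ are square--summable in $m$. This is precisely where finite excess is indispensable and where the argument would collapse for a general Poisson dextrodual orbit. The remainder --- unconditional convergence of Bessel synthesis, the dual--frame conclusion, and the appeal to the exponential--frame classification --- is routine.
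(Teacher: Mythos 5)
Your argument up to the point where you conclude that $\{e^{2\pi i nx}\}_{n\in\mathbb{Z}}$ is a frame for $L^{2}(\mu)$ is sound, and it reaches that conclusion by a slightly different (and clean) route than the paper: you write each $e^{2\pi i mx}$, $m\in E$, as the difference of the dual Riesz basis vector $\tilde g_{m}$ and a fixed finite combination $\sum_{n\in F}c_{m,n}e^{2\pi i nx}$ with $\sum_{m}|c_{m,n}|^{2}<\infty$, which gives Bessel-ness of the exponentials directly; the paper instead feeds $\sum_{|m|\ge k}c_{m}S^{-1}g_{m}$ into the Poisson dextrodual relation to show the symmetric synthesis sums converge for every $\ell^{2}$ sequence and then invokes uniform boundedness. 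Both hinge on the same ingredient (the biorthogonal system of the Riesz basis hiding inside a finite-excess frame), and the upgrade from Bessel to frame via Cauchy--Schwarz against the frame $\{g_{n}\}$ is the same in both.

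The genuine gap is at the very end. Theorem \ref{absc} (and the remarks opening Section \ref{S4}) only give that the Radon--Nikodym derivative $g$ is bounded above and below \emph{on its support}; the theorem you are proving asserts boundedness above and below on all of $[0,1)$, i.e.\ that $\mu$ is equivalent to Lebesgue measure (this is exactly how the result is advertised in the introduction: a measure not equivalent to Lebesgue measure can only carry such a frame with \emph{infinite} excess). Your proof never rules out $|\{x:g(x)=0\}|>0$. This is where the finite-excess hypothesis must be used a second time: by Theorem \ref{Bakicthm} the dual frame $\{e^{2\pi i nx}\}_{n\in\mathbb{Z}}$ inherits finite excess from $\{g_{n}\}$ (you note this only parenthetically, as a consistency check), and if $g$ vanished on a set $S$ of positive Lebesgue measure, then every $f\in L^{2}([0,1))$ supported in $S$ would give an $\ell^{2}$ coefficient sequence $\{\langle f,e^{2\pi i nx}\rangle_{L^{2}([0,1))}\}$ whose synthesis in $L^{2}(\mu)$ is $0$ (the partial sums converge to $f\chi_{S^{c}}\cdot$(something) $=0$ in $L^{2}(\mu)$ since $g$ is bounded above), so the synthesis operator of the exponentials would have infinite-dimensional kernel, contradicting finite excess. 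Without this last step you have proved only the weaker statement that $g$ is bounded above and below on its support.
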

\begin{proof}
It is known that $$\{g_{n}\}_{n}=\{g_{n}\}_{n\in F}\cup \{g_{n}\}_{n\in F^{c}}$$ where $F$ is finite and $\{g_{n}\}_{n\in F^{c}}$ is a Riesz basis. For details see Theorem 5.4.7 of \cite{Christensen2016Frames}. Let $S$ be the frame operator of $\{g_{n}\}_{n\in F^{c}}$, and choose $k$ so that $$\{g_{n}\}_{|n|\geq k}\subseteq \{g_{n}\}_{n\in F^{c}}.$$
See that for any $\{c_{n}\}\in \ell^{2}(\mathbb{Z})$ where $c_{n}=0$ for $|n|<k$, we have
$$\lim \sum_{n=-M}^{M}\langle \sum_{|m|\geq k}c_{m}S^{-1}g_{m}, g_{n}\rangle e^{2\pi i nx}=$$
$$\lim \sum_{n=-(M+k)}^{M+k}c_{n}e^{2\pi i nx}+\sum_{|n|<k}\langle \sum_{|m|\geq k}c_{m}S^{-1}g_{m}, g_{n}\rangle e^{2\pi i nx}$$
since $\{S^{-1}g_{n}\}_{n\in F^{c}}$ is biorthogonal to $\{g_{n}\}_{n\in F^{c}}$.
It follows by the Poisson dextrodual property that for any $\{c_{n}\}\in \ell^{2}(\mathbb{Z})$, $\sum_{n=-M}^{M}c_{n}e^{2\pi i nx}$ converges in $L^{2}(\mu)$. Therefore, the synthesis operator associated with $\{e^{2\pi i nx}\}_{n\in \mathbb{Z}}$ is bounded from being a point-wise limit of bounded operators. It follows from the same argument in Theorem \ref{Besselthm} that  $\{e^{2\pi i nx}\}_{n\in \mathbb{Z}}$ is a frame. Furthermore, $\{e^{2\pi i nx}\}_{n\in \mathbb{Z}}$ has finite excess by Theorem \ref{Bakicthm}. We also have by Theorem \ref{absc} that $\mu$ is absolutely continuous with Radon-Nikodym derivative bounded above and below on its support.

Let $g$ be the Radon-Nikodym derivative of $\mu$. Suppose for sake of contradiction that $$g(x)=0$$ for $x\in S$ where $|S|>0$. Let $f\in L^{2}([0,1))$ and vanish outside of $S$.
We have $$\sum_{n=-M}^{M}\langle f, e^{2\pi i nx}\rangle_{L^{2}([0,1))}e^{2\pi inx}\to 0$$ in $L^{2}(\mu)$ since $g$ is bounded. Therefore, the kernel of the synthesis operator associated with $\{e^{2\pi i nx}\}_{n\in \mathbb{Z}}$ is infinite dimensional, which contradicts $\{e^{2\pi i nx}\}_{n\in \mathbb{Z}}$ having finite excess, see \cite{Bakic2015Excesses}.
\end{proof}

\section*{Acknowledgment} This research was supported in part  by the National Science Foundation and the National Geospatial Intelligence Agency under awards \#1830254 and \#2219959.

\printbibliography
\end{document}